\newcommand{\dbar}{\ensuremath{\overline\partial}}
\newcommand{\C}{\ensuremath{\mathbb{C}}}
\newcommand{\D}{\ensuremath{\mathbb{D}}}
\newcommand{\sumprime}{\if@display\sideset{}{'}\sum%
            \else\sum'\fi}
\begin{document}

\numberwithin{equation}{section}

% define theorem environments
\newtheorem{theorem}{Theorem}[section]
\newtheorem{proposition}[theorem]{Proposition}
\newtheorem{conjecture}[theorem]{Conjecture}
\def\theconjecture{\unskip}
\newtheorem{corollary}[theorem]{Corollary}
\newtheorem{lemma}[theorem]{Lemma}
\newtheorem{observation}[theorem]{Observation}
\newtheorem{definition}{Definition}
\numberwithin{definition}{section} %\def\thedefinition{\unskip}
\newtheorem{remark}{Remark}
\def\theremark{\unskip}
\newtheorem{question}{Question}
\def\thequestion{\unskip}
\newtheorem{example}{Example}
\def\theexample{\unskip}
\newtheorem{problem}{Problem}

\def\vvv{\ensuremath{\mid\!\mid\!\mid}}
\def\intprod{\mathbin{\lr54}}
\def\reals{{\mathbb R}}
\def\integers{{\mathbb Z}}
\def\N{{\mathbb N}}
\def\complex{{\mathbb C}\/}
\def\dist{\operatorname{dist}\,}
\def\spec{\operatorname{spec}\,}
\def\interior{\operatorname{int}\,}
\def\trace{\operatorname{tr}\,}
\def\cl{\operatorname{cl}\,}
\def\essspec{\operatorname{esspec}\,}
\def\range{\operatorname{\mathcal R}\,}
\def\kernel{\operatorname{\mathcal N}\,}
\def\dom{\operatorname{Dom}\,}
\def\linearspan{\operatorname{span}\,}
\def\lip{\operatorname{Lip}\,}
\def\sgn{\operatorname{sgn}\,}
\def\Z{ {\mathbb Z} }
\def\e{\varepsilon}
\def\p{\partial}
\def\rp{{ ^{-1} }}
\def\Re{\operatorname{Re\,} }
\def\Im{\operatorname{Im\,} }
\def\dbarb{\bar\partial_b}
\def\eps{\varepsilon}
\def\O{\Omega}
\def\Lip{\operatorname{Lip\,}}

\def\Hs{{\mathcal H}}
\def\E{{\mathcal E}}
\def\scriptu{{\mathcal U}}
\def\scriptr{{\mathcal R}}
\def\scripta{{\mathcal A}}
\def\scriptc{{\mathcal C}}
\def\scriptd{{\mathcal D}}
\def\scripti{{\mathcal I}}
\def\scriptk{{\mathcal K}}
\def\scripth{{\mathcal H}}
\def\scriptm{{\mathcal M}}
\def\scriptn{{\mathcal N}}
\def\scripte{{\mathcal E}}
\def\scriptt{{\mathcal T}}
\def\scriptr{{\mathcal R}}
\def\scripts{{\mathcal S}}
\def\scriptb{{\mathcal B}}
\def\scriptf{{\mathcal F}}
\def\scriptg{{\mathcal G}}
\def\scriptl{{\mathcal L}}
\def\scripto{{\mathfrak o}}
\def\scriptv{{\mathcal V}}
\def\frakg{{\mathfrak g}}
\def\frakG{{\mathfrak G}}

\def\ov{\overline}

\thanks{Research supported in part by NSF grants DMS-0805852 and DMS-1101678, and a U.S.-China Collaboration in Mathematical Research supplementary to NSF grant DMS-0500909,  and by Fok Ying Tung Education Foundation grant No.~111004 and Chinese NSF grant No.~11031008.}

\address{Department of Applied Mathematics, Tongji University, Shanghai, 200092, China}
\address{Department of Mathematical Sciences,
Rutgers University, Camden, NJ 08102, U.S.A.} \email{boychen@tongji.edu.cn} \email{sfu@camden.rutgers.edu}

\title{Stability of the Bergman kernel on a tower of coverings}
\author{Bo-Yong Chen and Siqi Fu}
\date{}
\maketitle

\bigskip

\leftline{\it Dedicated to Takeo Ohsawa Sensei}

\bigskip

\begin{abstract} We obtain several results about stability of the Bergman kernel on a tower of coverings on complex manifolds. An effective version of Rhodes' result is given for a tower of coverings on a compact Riemann surface
of genus $\ge 2$. Stability of the Bergman kernel is established for towers of coverings on hyperbolic Riemann surfaces and on complete K\"{a}hler manifolds satisfying certain potential conditions. As a consequence, stability  of the Bergman kernel is established for any tower of coverings of Riemann surfaces when the top manifold is simply-connected.
\bigskip

\noindent{{\sc Mathematics Subject Classification} (2000): 32Q26, 32A25, 32W05, 32J25.}

\smallskip

\noindent{{\sc Keywords}: Tower of coverings, Bergman kernel, Bergman stability, $L^2$-estimate, $\bar\partial$-operator, complex Laplacian.}
\end{abstract}

\section{Introduction}

The classical Bergman kernel--the reproducing kernel for $L^2$-holomorphic functions--has long played an
important role in complex analysis. Its generalization to complex manifolds--in this case, the kernel for
the projection onto the space of harmonic $(p, q)$-forms with $L^2$-coefficients--is encoded with
information on the algebraic and geometric structures of the underlying manifolds. How the Bergman kernel
behaves as the underlying structures changes is a problem that has been extensively studied in a number of
settings. Convergence of the Bergman kernel associated to tensor powers of a positive holomorphic line
bundle over a compact complex manifold as the power goes to infinity was established in the celebrated work of Tian \cite{Tian90}, Zelditch \cite{Zelditch98}, and Catlin \cite{Catlin99}. (See  \cite{BBS08} and
references therein for recent developments.)

In this paper, we study stability of the Bergman kernel on a quotient $\widetilde M/\Gamma$ of a complex manifold $\widetilde M$ by a free and properly discontinuous group $\Gamma$ of automorphisms of $\widetilde M$ as $\Gamma$ shrinks to the identity.  We first recall the setup in Riemannian geometry. Let $\widetilde M$ be a Riemannian manifold and $\Gamma$ a free and properly discontinuous group of isometries of $\widetilde M$. A {\it tower of subgroups} of $\Gamma$ is a nested
sequence of subgroups $\Gamma=\Gamma_1\supset\Gamma_2\supset\cdots\supset\Gamma_j \supset\cdots\supset \cap
\Gamma_j=\{ \text{id}\}$ such that $\Gamma_j$ is a normal subgroup of $\Gamma$ of finite index
$[\Gamma:\Gamma_j]$ for each $j$ (see \cite[p.~135]{DeGeorgeWallach78}). The smooth manifolds
$M_j=\widetilde M/\Gamma_j$ are equipped with the push-downs of the Riemannian metric on $\widetilde M$.
The family $\{ M_j \}$ is called a {\it tower of coverings} on the Riemannian manifold $M=\widetilde
M/\Gamma$. We will also refer to $\widetilde M$ as the {\it top manifold}, $M_j$ the {\it covering
manifolds},  and $M=\widetilde M/\Gamma$ the {\it base manifold} of the tower of coverings. (In
applications, the top manifold is usually assumed to be the universal covering of the base manifold.
However, we make no such assumption in this paper.) It is well known that every Riemannian manifold whose
fundamental group is isomorphic to a finitely generated subgroup of $SL(n,{\mathbb C})$ admits a tower of
coverings with the top manifold being the universal covering (cf. \cite[Theorem~B and
Proposition~2.3]{Borel63}). This is the case, for instance, for an arithmetic quotient of a bounded symmetric domain. Limiting behavior of the spectrum of the Laplacian on $M_j$ as $j\to\infty$ was studied
for a tower of coverings on symmetric spaces of non-compact type by DeGeorge-Wallach \cite{DeGeorgeWallach78}.

In his work \cite{Kazhdan70, Kazhdan83}, Kazhdan employed the Bergman kernel to study arithmetic varieties
and initiated the study of the Bergman kernel on a tower of coverings on a complex manifold. It follows
from his work that for a tower of coverings $\{\widetilde{M}/\Gamma_j\}$ on a compact complex manifold, the Bergman kernel on the universal covering $\widetilde{M}$ is nontrivial provided $\lim\sup_{j\rightarrow \infty} h^{n, 0}(M_j)/[\Gamma:\Gamma_j]>0$, where $h^{n, 0}(M_j)$ is the dimension of the space of global sections of the canonical line bundle on $M_j$ (see \cite[Theorem~1]{Kazhdan83}). Kazhdan suggested that for a tower of coverings on a Riemann surface, the pull-backs of the Bergman metric on $M_j$ converges to that of the
upper half plane $\widetilde M={\mathbb H}$ (see \cite[p.~12]{Mumford75}). In \cite[p.~139]{Yau86}, Yau
stated, as a result attributed to Kazhdan, that this also holds for a tower of coverings on any complex
manifold. However, no proof of this statement has been published as far as we know. (See
Section~\ref{sec:compact} below for a discussion on the link between Kazhdan's inequality and convergence
of the Bergman kernels.)

For brevity, a tower of coverings $\{M_j\}$ on a complex manifold is said to be {\it Bergman stable}
if the pull-back of the Bergman kernel on $M_j$ converges locally uniformly to that of the top manifold
$\widetilde M$ as $j\to\infty$. In 1993, Rhodes showed that a tower of covering on a compact
Riemann surface of genus $g\ge 2$ is indeed Bergman stable~\cite{Rhodes93}. Donnelly
\cite{Donnelly96} proved analogous results for a tower of coverings on a Riemannian manifold $M$ under the
conditions that $\widetilde M$ has bounded sectional curvature and the smallest nonzero eigenvalue of the
Laplacian on $M_j$ is uniformly bounded from below by a positive constant. His method was based on
Cheeger-Gromov-Taylor's estimates of the heat kernel \cite{CGT82} and Atiyah's $L^2$-index theorem
\cite{Atiyah76}.  Building on Donnelly's work, Yeung showed in \cite{Yeung00a, Yeung00b}  that the
canonical line bundle of $M_j$ is very ample when the injectivity radius of $M_j$ is greater than a certain
effective constant depending on the top manifold $\widetilde{M}$ and the base manifold $M$. More recently,
using Donnelly-Fefferman's $L^2$-estimate for the $\bar\partial$-operator, Ohsawa \cite{Ohsawa09} established
Bergman stability for a tower of coverings on a complex manifold under certain assumptions on successive
approximations of $\dbar$-closed $(n, 0)$-forms on $M_j$ by those on $\widetilde M$. He further gave an example of a tower of {\it branched} coverings on a compact Riemann surface that is not Bergman stable (\cite{Ohsawa10}).

For a tower of coverings on a compact Riemann surface, we establish the following effective version of
Rhodes' theorem:

\begin{theorem}\label{th:eff} Let $M_j=\D/\Gamma_j$ be a tower of
coverings on a compact Riemann surface of genus $g\ge 2$. Let $\tau_j$ be the injectivity radius of $M_j$
and $|\cdot|_{\rm hyp}$ the pointwise length with respect to the hyperbolic metric. Then the Bergman kernel
$K_{M_j}$ of $M_j$ satisfies
\begin{equation}\label{eq:eff}
|4\pi|K_{M_j}|_{\rm hyp}-1|\le \frac{12\cdot 3^{2/3}}{\pi}(g-1)^{1/3}e^{-\tau_j/3}
\end{equation}
when $\tau_j\ge \log 3$. Furthermore, a similar estimate also holds for the Bergman metric.
\end{theorem}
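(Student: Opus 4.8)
The plan is to work on the covering manifold $M_j = \D/\Gamma_j$ and compare its Bergman kernel with that of the disk $\D$ via an averaging (Poincar\'e series) construction. Fix a point $z_0 \in \D$ and let $\varphi \in L^2_{(1,0)}(\D)$ be a holomorphic one-form; since $\D$ has the Bergman reproducing kernel $K_\D(z) = \frac{1}{\pi(1-|z|^2)^2}$ with $|K_\D|_{\rm hyp} = 1/(4\pi)$, the extremal one-form realizing $|K_{M_j}|_{\rm hyp}$ at the image of $z_0$ can be lifted to $\D$. The first step is to pull back an extremal square-integrable holomorphic form on $M_j$ to a $\Gamma_j$-invariant holomorphic form on $\D$, restrict it to a fundamental domain, and observe that the fundamental domain contains the hyperbolic ball $B(z_0,\tau_j)$ of radius $\tau_j$ (the injectivity radius) about $z_0$. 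This immediately gives the inequality $4\pi|K_{M_j}|_{\rm hyp}(z_0) \le 1$ up to the mass of the extremal form lying outside $B(z_0,\tau_j)$ — in effect $4\pi|K_{M_j}|_{\rm hyp} \le 1 + (\text{tail mass})$ after normalizing. The reverse direction requires producing a genuine $L^2$ holomorphic one-form on $M_j$ that is nearly extremal; this is where one uses H\"{o}rmander-type $L^2$ estimates for $\dbar$ on $M_j$ (equivalently, $\Gamma_j$-equivariant estimates on $\D$): take a smooth cutoff $\chi$ supported on $B(z_0, \tau_j)$ equal to $1$ on $B(z_0,\tau_j/2)$, multiply the disk's reproducing kernel section by $\chi$, and correct $\dbar(\chi K_\D(\cdot,z_0))$ by the minimal $L^2$ solution. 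The curvature of the hyperbolic metric being $-1$ (constant, hence bounded) makes the $L^2$ estimate uniform in $j$, and the error term is controlled by $\|\dbar\chi\|$ times the sup of $|K_\D|$ on the annulus $B(z_0,\tau_j)\setminus B(z_0,\tau_j/2)$.

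The second step is to make the tail estimate explicit. Working in geodesic polar coordinates on $\D$ centered at $z_0$, with hyperbolic metric $ds^2 = d\rho^2 + \sinh^2\rho\, d\theta^2$, the hyperbolic area element is $\sinh\rho\, d\rho\, d\theta$, and the $L^2$ norm of a normalized holomorphic one-form has its mass outside radius $\tau_j$ bounded by integrating a model computation. The key quantitative input is that a holomorphic one-form on $\D$ which is $L^2$ and pointwise-normalized at $z_0$ cannot concentrate too much mass near the ideal boundary: one gets an exponential decay factor $e^{-\tau_j}$ or $e^{-2\tau_j}$ from the growth $\sinh^2\rho \sim \frac14 e^{2\rho}$ of the area versus the decay forced on the form by the mean-value / sub-mean-value property of $|\varphi|^2$ with respect to the hyperbolic Laplacian. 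Combining the tail bound with the cutoff error from step one and optimizing the split radius (which is why $\tau_j/2$ should really be a free parameter $\lambda\tau_j$, optimized at the end — this is the source of the exponent $1/3$ and the constant $3^{2/3}$) yields the stated inequality \eqref{eq:eff}, with the genus entering through Gauss–Bonnet: $\mathrm{Area}_{\rm hyp}(M_j) = [\Gamma:\Gamma_j]\cdot\mathrm{Area}_{\rm hyp}(M) = 4\pi(g-1)[\Gamma:\Gamma_j]$, which bounds the number of $\Gamma_j$-translates of $z_0$ meeting a ball and hence the overlap in the Poincar\'e series, producing the factor $(g-1)^{1/3}$.

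The third step handles the Bergman metric: the Bergman metric is $\partial\dbar\log K_{M_j}$, and once one has locally uniform convergence of $K_{M_j}$ together with uniform $L^2$ bounds, a Cauchy-estimate argument on a fixed coordinate ball upgrades the kernel convergence to convergence of first and second derivatives of $\log K_{M_j}$, hence of the metric, with a comparable (slightly worse) effective rate; I would state this and defer the bookkeeping, since it is routine given the kernel estimate.

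The main obstacle is the reverse inequality with an \emph{effective} constant: producing a near-extremal $L^2$ holomorphic one-form on $M_j$ requires solving $\dbar$ with an explicit bound, and the delicate point is that the naive H\"{o}rmander estimate on $M_j$ needs a strictly plurisubharmonic weight — but the hyperbolic metric's potential is not globally defined on $M_j$. One routes around this either by the Donnelly–Fefferman trick (twisting by a bounded weight, as in Ohsawa's approach mentioned in the introduction) or by working $\Gamma_j$-equivariantly on $\D$ where the potential $-\log(1-|z|^2)$ is available; keeping every constant in this argument sharp, and then correctly optimizing the cutoff radius against the Gauss–Bonnet overlap count, is the real work behind the clean-looking bound $\frac{12\cdot 3^{2/3}}{\pi}(g-1)^{1/3}e^{-\tau_j/3}$.
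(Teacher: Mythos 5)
Your upper bound is exactly the paper's: push the inclusion $B_{\rm hyp}(z,\tau_j)\subset D_j(z)$ of the hyperbolic ball into the Dirichlet fundamental domain through the decreasing property of the Bergman kernel, and compare with the kernel of the hyperbolic disc. The lower bound, however, is where you diverge from the paper, and where your sketch contains a genuine confusion.

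The paper's lower bound is entirely $\dbar$-free. The key observation you missed is that Gauss--Bonnet determines the \emph{total mass} of the Bergman kernel exactly:
\[
\int_{M}|K_{M_j}|_{\rm hyp}\,dV_{\rm hyp}\;=\;\frac{g_j}{[\Gamma:\Gamma_j]}\;=\;(g-1)+\frac{1}{[\Gamma:\Gamma_j]}\;\ge\;\frac{1}{4\pi}\,\mathrm{vol}_{\rm hyp}(M),
\]
so $\int_M\bigl(|K_{M_j}|_{\rm hyp}-\tfrac{1}{4\pi}\bigr)\,dV_{\rm hyp}\ge 0$. Combined with the pointwise \emph{upper} bound, which caps the positive part of the integrand by $\tfrac{1}{\pi}e^{\tau_j}/(e^{\tau_j}-1)^2$, and a Lipschitz estimate for the extremal form (from the reproducing formula on the Euclidean ball $B(0,r_j)$, giving $|f^*(w)^2-f^*(0)^2|\lesssim |w|/r_j^3$), one finds that $|K_{M_j}|_{\rm hyp}$ cannot dip far below $\tfrac{1}{4\pi}$ on any small ball $\epsilon B_j$ around $0$, on pain of violating the mass identity. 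Optimizing $\epsilon$ balances an $O(\epsilon/r_j^2)$ Lipschitz error against an $O\bigl((g-1)e^{-\tau_j}/(\epsilon^2 r_j^2)\bigr)$ integral remainder; the optimum $\epsilon\sim\bigl((g-1)e^{-\tau_j}\bigr)^{1/3}$ is what produces both the exponent $e^{-\tau_j/3}$ and the factor $(g-1)^{1/3}$. This is the ``elementary'' proof advertised in the introduction: only Gauss--Bonnet and the reproducing property.

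Your proposed route for the lower bound --- cut off $K_\D(\cdot,z_0)$ near $z_0$, push down, and correct by the minimal $L^2$ solution of $\dbar$ with a Donnelly--Fefferman-type twist --- is a legitimate and in principle workable alternative. But note that this construction is purely \emph{local}: the error is controlled by $\|\dbar\chi\|$ and the weight, and the genus of $M_j$ does not enter anywhere. Your attribution of the $(g-1)^{1/3}$ factor to ``overlap in the Poincar\'e series'' is inconsistent with the $\dbar$ mechanism you actually propose, since you never form a Poincar\'e series in that construction; overlap counting would be relevant for an averaging argument, not for H\"ormander on $M_j$. Carried out carefully, the $\dbar$ route would more naturally give a bound of the shape $|K_{M_j}|_{\rm hyp}\ge\tfrac{1}{4\pi}(1-Ce^{-c\tau_j})$ with no genus dependence at all (and likely a different exponent $c$), not the specific form of \eqref{eq:eff}. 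So the two approaches buy different things: the paper's integral argument is elementary and yields exactly \eqref{eq:eff}; a $\dbar$ approach avoids the global Gauss--Bonnet mass identity but requires a weight construction and delicate constant tracking, and would land on a differently-shaped estimate. Your sketch as written does not cohere into a proof of \emph{this} statement, because the mechanism you invoke for the genus factor is not present in the framework you set up.
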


Our proof of the above theorem is elementary; it uses only the Gauss-Bonnet formula and the reproducing
property of the Bergman kernel. For a tower of coverings on a compact complex manifold, we exhibit a
connection between Bergman stability and the theory of $L^2$-Betti  numbers, an area studied extensively in the literatures (cf. \cite{CheegerGromov85a, CheegerGromov85b, Luck94, Yeung94}; see Section~\ref{sec:compact} below).

The main focus in this paper, however, is on towers of coverings on {\it noncompact} complex manifolds. Recall that a Riemann surface $M$ is {\it hyperbolic} if it carries a negative nonconstant subharmonic
function. This is equivalent to existence of the Green function on $M$. (We refer the reader to
\cite[Chapter IV]{FarkasKra80} for relevant background material.) For noncompact Riemann surfaces, as a simple application of the classical Myrberg's formula \cite{Myrberg33} and an idea from \cite{Fu01}, we have:

\begin{theorem}\label{th:hyp}
Any tower of coverings on a hyperbolic Riemann surface is Bergman stable.
\end{theorem}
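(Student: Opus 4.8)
The plan is to combine the classical description of the Bergman kernel of a hyperbolic Riemann surface via its Green function with Myrberg's formula for Green functions on coverings. On any hyperbolic Riemann surface $R$, with $g_R>0$ its Green function, one has in a local holomorphic coordinate $K_R(z,\bar w)=-\frac{2}{\pi}\,\partial_z\partial_{\bar w}g_R(z,w)$; and since $\partial_z\partial_{\bar w}\bigl(-\log|z-w|\bigr)\equiv 0$, the right-hand side extends smoothly across the diagonal, so $K_R(z)=-\frac{2}{\pi}\,\partial_z\partial_{\bar w}g_R(z,w)\big|_{w=z}$. The first step is to note that every covering manifold of a hyperbolic Riemann surface is again hyperbolic: pulling back a negative nonconstant subharmonic function on $M$ by the (surjective, locally biholomorphic) covering maps produces such functions on each $M_j$ and on $\widetilde M$, so the Green functions $g_{M_j}$ and $g_{\widetilde M}$ all exist.

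Next, apply Myrberg's formula \cite{Myrberg33} to the regular covering $\pi_j\colon\widetilde M\to M_j=\widetilde M/\Gamma_j$, whose deck group is $\Gamma_j$: since $M_j$ is hyperbolic, the series converges and
\[
g_{M_j}(\pi_j z,\pi_j w)=\sum_{\gamma\in\Gamma_j}g_{\widetilde M}(z,\gamma w).
\]
For $\gamma\ne\mathrm{id}$ the function $g_{\widetilde M}(z,\gamma w)=g_{\widetilde M}(\gamma^{-1}z,w)$ is positive and, separately in $z$ and in $w$, harmonic on a neighborhood of the diagonal $\{w=z\}$, because $\Gamma_j$ acts freely; the singular contribution comes only from $\gamma=\mathrm{id}$. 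Differentiating term by term and using the Bergman--Green relation,
\[
\pi_j^*K_{M_j}(z)=K_{\widetilde M}(z)-\frac{2}{\pi}\,\partial_z\partial_{\bar w}S_j(z,w)\big|_{w=z},\qquad S_j(z,w):=\sum_{\gamma\in\Gamma_j\setminus\{\mathrm{id}\}}g_{\widetilde M}(z,\gamma w),
\]
so it suffices to show that $\partial_z\partial_{\bar w}S_j\big|_{w=z}\to 0$ locally uniformly on $\widetilde M$.

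This is where the idea from \cite{Fu01} enters: one works with $S_j$, a sum of \emph{positive} terms, rather than with the $L^2$ spaces themselves, for which the coverings $\widetilde M\to M_j$ have infinite index. On a compact subset of a neighborhood of the diagonal, $S_j$ is dominated by the fixed finite continuous function $S_1(z,w)=g_M(\pi_1 z,\pi_1 w)-g_{\widetilde M}(z,w)$; it is nonincreasing in $j$ because $\Gamma_{j+1}\setminus\{\mathrm{id}\}\subset\Gamma_j\setminus\{\mathrm{id}\}$; and it converges to $0$ pointwise, since $\bigcap_j\Gamma_j=\{\mathrm{id}\}$ forces every fixed term eventually out of the sum. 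By Dini's theorem $S_j\to 0$ uniformly on compacta. Since each $S_j$ is a positive harmonic function of $z$ (respectively of $w$) there, Harnack's inequality together with the interior Cauchy estimates for harmonic functions promotes this to uniform convergence of $\partial_z S_j$, and then of $\partial_{\bar w}\partial_z S_j$, on slightly smaller compacta; restricting to the diagonal gives $\pi_j^*K_{M_j}\to K_{\widetilde M}$ locally uniformly. When $K_{\widetilde M}$ is strictly positive (for instance when $\widetilde M=\D$), differentiating once more in the same way also yields convergence of the Bergman metrics.

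The one point requiring care is the transition from convergence of the scalar series $S_j$ to convergence of its mixed second derivative along the diagonal, uniformly on compacta; but everything there takes place at a fixed positive distance from the singular set $\{z=\gamma w:\gamma\ne\mathrm{id}\}$, so it is handled by the standard interior estimates for positive harmonic functions, and likewise for justifying the term-by-term differentiation of Myrberg's series. Thus the substance of the proof is the reduction above, and the rest is routine.
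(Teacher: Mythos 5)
Your argument is correct and rests on the same two pillars as the paper's proof: Myrberg's Green-function series and Schiffer's identity $K=-\frac{2}{\pi}\partial_z\partial_{\bar w}g$, reducing everything to showing that a tail sum over $\Gamma_j\setminus\{\mathrm{id}\}$ tends to zero locally uniformly. The execution differs. The paper lifts everything to the universal cover $\D$, first proving the Bergman-kernel transformation formula $\widetilde p^*K_M(z,w)=\sum_{\gamma\in\Gamma}(\gamma^*K^z_{\widetilde M})(w)$ (Proposition~\ref{prop:trans}) by applying Myrberg--Schiffer twice and resumming, and then bounds the tail explicitly with the formula for $K_\D$, obtaining the quantitative estimate
\[
|E_j|\le \frac{1}{\pi|\widehat p'(z)\widehat p'(w)|(1-|z|)^2(1-|w|)^2}\sum_{\gamma\in\Gamma_j\setminus\widetilde\Gamma}(1-|\gamma(0)|^2).
\]
You instead stay on $\widetilde M$, treat the Green-function tail $S_j$ directly, and use the soft combination of Dini's theorem (monotone pointwise $\Rightarrow$ uniform) with interior estimates for separately harmonic functions to promote $S_j\to 0$ to $\partial_z\partial_{\bar w}S_j|_{w=z}\to 0$. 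This avoids the chain-rule bookkeeping in Proposition~\ref{prop:trans} at the cost of being non-effective. One step you should spell out: you invoke Myrberg's formula for the covering $\widetilde M\to M_j$, but the result as cited is for a Fuchsian group acting on $\D$; the generalized statement you need (including the identification $S_1=g_M\circ\pi_1-g_{\widetilde M}$ used for the Dini majorant) follows by exactly the descent trick in the paper's Proposition~\ref{prop:trans} -- write $\widetilde M=\D/\widetilde\Gamma$, $M_j=\D/\Gamma_j$, apply Myrberg's formula on $\D$ twice, and resum over cosets. With that justification supplied, your proof is complete, and, as the paper also observes in its remark, neither argument uses the finite-index assumption on $[\Gamma:\Gamma_j]$.
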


Our main result on higher dimensional non-compact complex manifolds
can be stated as follows:

\begin{theorem}\label{th:g}  Let $M$ and $\widetilde M$ be complete K\"{a}hler manifolds with associated
K\"{a}hler forms $\omega$ and $\widetilde\omega$ respectively. Let $M_j=\widetilde M/\Gamma_j$ be a tower
of coverings on $M$.  Then the tower is Bergman stable provided the following two conditions are satisfied:
\begin{enumerate}
\item There exist a compact set $K\subset M$, a $C^2$-smooth plurisubharmonic
function $\psi$ on $M\setminus K$, and a constant $C>0$ such that $C^{-1}\omega\le
\partial\bar\partial\psi\le C\omega$ and $\partial\bar\partial\psi\ge
C^{-1}\partial\psi\wedge\bar\partial\psi$ on $M\setminus K$.

\item There exist a $C^2$-smooth plurisubharmonic
function $\widetilde\psi$ on $M$ and a constant $C>0$ such that $C^{-1}\widetilde\omega\le
\partial\bar\partial\widetilde\psi\le C\widetilde\omega$ and $\partial\bar\partial\widetilde\psi\ge
C^{-1}\partial\widetilde\psi\wedge\bar\partial\psi$ on $\widetilde M$.
\end{enumerate}
\end{theorem}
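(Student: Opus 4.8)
The plan is to bound $\pi_j^{*}K_{M_j}$ above and below on the diagonal by $K_{\widetilde M}$, the upper bound being soft and the lower bound absorbing all of conditions (1)--(2). Throughout, $H^{2}(X)$ denotes the space of $L^{2}$ holomorphic $n$-forms on $X$ (which is the space of $L^{2}$-harmonic $(n,0)$-forms), and $B(x,r)$ a geodesic ball. For the upper bound, fix $x_{0}\in\widetilde M$ and, for each $j$, choose $g_{j}\in H^{2}(M_{j})$ with $\|g_{j}\|_{M_{j}}=1$ extremal (up to $1/j$) for $K_{M_{j}}$ at $\pi_{j}(x_{0})$. Pulled back to $\widetilde M$, the $\pi_{j}^{*}g_{j}$ are holomorphic $n$-forms, and since $\Gamma$ acts freely and properly discontinuously, for any compact $L\subset\widetilde M$ one has $\gamma L\cap L=\emptyset$ for all $\gamma\in\Gamma_{j}\setminus\{\mathrm{id}\}$ once $j$ is large; in particular the injectivity radii of the $M_{j}$ tend to $\infty$ and $\|\pi_{j}^{*}g_{j}\|_{L^{2}(L)}\le\|g_{j}\|_{M_{j}}=1$. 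A normal-families argument then extracts a subsequence converging locally uniformly to some $f\in H^{2}(\widetilde M)$ with $\|f\|_{\widetilde M}\le1$, whence $\limsup_{j}K_{M_{j}}(\pi_{j}(x_{0}))\le|f(x_{0})|^{2}\le K_{\widetilde M}(x_{0})$; running this at every point of a compact set together with interior estimates gives $\limsup_{j}\pi_{j}^{*}K_{M_{j}}\le K_{\widetilde M}$ locally uniformly.

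For the lower bound it suffices, by the reproducing property of the Bergman kernel, to prove the following approximation statement (which, combined with the upper bound and the above normal-families arguments, yields the theorem): for every $\tilde f\in H^{2}(\widetilde M)$ with $\|\tilde f\|_{\widetilde M}=1$ and every $\varepsilon>0$ there are, for all large $j$, forms $g_{j}\in H^{2}(M_{j})$ with $\|g_{j}\|_{M_{j}}\le1$ and $|g_{j}(\pi_{j}(x_{0}))-\tilde f(x_{0})|<\varepsilon$, with all constants locally uniform in $x_{0}$. I would construct $g_{j}$ by truncation and correction. Choose a cut-off $\chi_{R}$ on $\widetilde M$ equal to $1$ on $B(x_{0},R)$, supported in $B(x_{0},2R)$, with $|\bar\partial\chi_{R}|\le C/R$ (possible since $\widetilde M$ is complete). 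For $j$ large, $\pi_{j}$ is injective on $B(x_{0},2R)$, so $\chi_{R}\tilde f$ descends to a compactly supported smooth $(n,0)$-form $\tilde u_{j}$ on $M_{j}$ with $\|\tilde u_{j}\|_{M_{j}}\le1$, with $\tilde u_{j}=\tilde f$ near $\pi_{j}(x_{0})$, and with $\alpha_{j}:=\bar\partial\tilde u_{j}$ supported in the image of the annulus $B(x_{0},2R)\setminus B(x_{0},R)$ and of norm $\|\alpha_{j}\|_{M_{j}}\le (C/R)\|\tilde f\|_{L^{2}(\widetilde M\setminus B(x_{0},R))}\le C/R$. Put $g_{j}:=\tilde u_{j}-v_{j}$, where $v_{j}$ is the $L^{2}$-minimal solution of $\bar\partial v_{j}=\alpha_{j}$ on $M_{j}$; then $g_{j}$ is holomorphic, $g_{j}=P_{M_{j}}\tilde u_{j}$ for the Bergman projection $P_{M_{j}}$ onto $H^{2}(M_{j})$, so $\|g_{j}\|_{M_{j}}\le\|\tilde u_{j}\|_{M_{j}}\le1$, and --- since $\alpha_{j}$ vanishes near $\pi_{j}(x_{0})$, where $v_{j}$ is therefore holomorphic --- $|g_{j}(\pi_{j}(x_{0}))-\tilde f(x_{0})|^{2}=|v_{j}(\pi_{j}(x_{0}))|^{2}\le C_{0}\,\|v_{j}\|^{2}_{L^{2}(B(\pi_{j}(x_{0}),r_{0}))}$ for a fixed small $r_{0}$ and a constant $C_{0}$ depending only on the (uniformly controlled) local geometry. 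Everything thus reduces to showing $\limsup_{j}\|v_{j}\|^{2}_{L^{2}(B(\pi_{j}(x_{0}),r_{0}))}\to0$ as $R\to\infty$.

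This last estimate is where conditions (1) and (2) enter, and it is the main obstacle. One has the crude a priori bound $\|v_{j}\|_{M_{j}}\le\|\tilde u_{j}\|_{M_{j}}+\|g_{j}\|_{M_{j}}\le2$, so after pulling back to $\widetilde M$ --- on the balls $B(x_{0},R_{j})$ with $R_{j}\to\infty$ on which $\pi_{j}$ is injective --- the forms $V_{j}:=\pi_{j}^{*}v_{j}$ are $L^{2}$-bounded and solve $\bar\partial V_{j}=\bar\partial\chi_{R}\wedge\tilde f$. Writing $V_{j}-(\chi_{R}-1)\tilde f=:H_{j}$, which is holomorphic and $L^{2}$-bounded on $B(x_{0},R_{j})$, a normal-families argument gives a subsequential limit, hence $V_{j}\to V_{\infty}$ locally uniformly, where $V_{\infty}$ is an $L^{2}$ solution of $\bar\partial V_{\infty}=\bar\partial\chi_{R}\wedge\tilde f$ on $\widetilde M$. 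The heart of the matter is to show that $V_{\infty}$ is the \emph{minimal} $L^{2}$ solution on $\widetilde M$, whose norm is at most $\|(\chi_{R}-1)\tilde f\|_{\widetilde M}\le\|\tilde f\|_{L^{2}(\widetilde M\setminus B(x_{0},R))}\to0$; this forces $\|V_{\infty}\|_{L^{2}(B(x_{0},r_{0}))}\to0$ as $R\to\infty$ and closes the argument. Hypothesis (2) --- via H\"{o}rmander's estimate with the weight $\widetilde\psi$, improved by the self-bounded-gradient condition $\partial\bar\partial\widetilde\psi\ge C^{-1}\partial\widetilde\psi\wedge\bar\partial\widetilde\psi$ in the manner of Donnelly--Fefferman --- guarantees that $\bar\partial$ has closed range on $\widetilde M$ with a quantitative bound and that $\widetilde M$ carries no nonzero $L^{2}$-harmonic $(n,1)$-form, which is what makes it conceivable that minimality persists in the limit. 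Hypothesis (1), pulled back by the finite covering $p_{j}\colon M_{j}\to M$, furnishes on $M_{j}\setminus p_{j}^{-1}(K)$ a plurisubharmonic function $\psi\circ p_{j}$ with the same bounds $C^{-1}\omega_{j}\le\partial\bar\partial(\psi\circ p_{j})\le C\omega_{j}$ and $\partial\bar\partial(\psi\circ p_{j})\ge C^{-1}\partial(\psi\circ p_{j})\wedge\bar\partial(\psi\circ p_{j})$; since $p_{j}$ is finite, the ends of $M_{j}$ are isometric to finitely many copies of the ends of $M$, so a Donnelly--Fefferman estimate confines, uniformly in $j$, the $L^{2}$-mass of the nearly $\bar\partial$-harmonic forms $v_{j}$ to a region lying over a fixed compact subset of $M$ (equivalently, $\inf$ of the essential spectrum of the $\bar\partial$-Laplacian on $(n,1)$-forms on $M_{j}$ is bounded below by a positive constant independent of $j$), preventing the mass from escaping into the expanding part of $M_{j}$.

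The remaining --- and principal --- difficulty is to combine this uniform mass confinement with the pointed convergence $(M_{j},\pi_{j}(x_{0}))\to(\widetilde M,x_{0})$ so as to pass the $L^{2}$-minimality of $v_{j}$ on $M_{j}$ to the minimality of $V_{\infty}$ on $\widetilde M$: one must test $V_{\infty}$ against $L^{2}$ holomorphic $n$-forms on $\widetilde M$, approximate these by truncations that descend to $M_{j}$, and show the resulting error terms vanish in the double limit $(j\to\infty,\ \text{truncation parameter}\to\infty)$; here the closed-range bound from (2) controls the errors and the confinement from (1) guarantees tightness, so that no $L^{2}$-mass is lost and the orthogonality survives. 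It is precisely at this step that neither potential condition alone suffices and that (1) and (2) must be played against each other; once $V_{\infty}$ is identified with the minimal solution, the proof concludes as indicated, and the analogous reasoning for the Bergman metric follows by differentiating the kernel and repeating the argument.
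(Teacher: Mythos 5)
Your upper bound is the same soft argument as the paper's Proposition~\ref{prop:upper}. For the lower bound, your overall shape (cut off $\tilde f$, descend, correct by solving $\bar\partial$, estimate the correction at the base point) is a natural H\"ormander-style route and is quite different from what the paper actually does, but the step you yourself flag as ``the principal difficulty'' --- passing minimality of $v_j$ on $M_j$ to minimality of $V_\infty$ on $\widetilde M$ --- is a genuine gap that your sketch does not close. Conditions (1)--(2), via the Donnelly--Fefferman/Bochner argument in Lemma~\ref{lm:f2}, only give a \emph{uniform lower bound on the essential spectrum} of $\Box_{n,1}^{M_j}$; they do not exclude finitely many discrete eigenvalues of $\Box_{n,1}^{M_j}$ below that threshold, and the corresponding eigenforms (equivalently, the $(n,0)$-forms that are ``almost holomorphic'' but not holomorphic) are precisely what pollute your $v_j=\tilde u_j-P_{M_j}\tilde u_j$. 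When you try to test $V_\infty$ against $h\in H^2(\widetilde M)$ by truncating $h$ to $\chi_S h$ and descending, you need $\|\chi_S h-P_{M_j}(\chi_S h)\|_{M_j}$ to be small; without a closed-range bound uniform in $j$ --- which is not implied by (1)+(2) --- this fails, and the orthogonality does not survive the limit. Saying ``the closed-range bound from (2) controls the errors and the confinement from (1) guarantees tightness'' is where a real theorem is needed and none is supplied.

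The paper's proof is engineered specifically to avoid this. Instead of splitting $\varrho_j$ into a holomorphic part $P_{M_j}\varrho_j$ and the minimal $\bar\partial$-correction, it splits into the \emph{low-eigenvalue} spectral projection $u_j\in\mathcal{H}^{n,0}_{(2)}(M_j,\delta_0)$ (eigenvalues $\le\delta_0$, not just eigenvalue $0$) and its orthogonal complement $v_j$. This decomposition is tractable on both sides: the spectral gap gives $\|v_j\|^2\le\delta_0^{-1}\|\bar\partial\varrho_j\|^2\lesssim\tau_j(z)^{-2}\to0$ immediately (no closed-range estimate on $M_j$ needed), and the discrepancy between the spectral kernel $|K_{M_j,\delta_0}|$ and the Bergman kernel $|K_{M_j}|$ is controlled \emph{pointwise on the compact part} by Lemma~\ref{lm:f5}, which in turn rests on Lemma~\ref{lm:f3} (the low-eigenvalue $(n,1)$-forms on $M_j$ are pointwise small on $K_j$, proved by lifting a cut-off eigenform to $\widetilde M$ and applying condition~(2) there) and the dimension bound of Lemma~\ref{lm:f4}. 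That is the concrete sense in which (1) and (2) are ``played against each other'': (1) keeps $\sigma_e(\Box^{M_j}_{n,1})$ uniformly away from $0$, and (2) forces the finitely many sub-threshold eigenforms to concentrate away from $K_j$ and hence be negligible where it matters. If you want to rescue your approach, you would in effect need to prove Lemma~\ref{lm:f5}'s content anyway, at which point the spectral-projector decomposition is both cleaner and what actually closes the argument.
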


It is easy to see that the above conditions are satisfied if the base manifold $M$ is a hyperconvex complex
manifold--namely if it admits a $C^2$ strictly plurisubharmonic proper map $\rho: M\rightarrow
[-1,0)$. As a consequence of Theorem~\ref{th:hyp} and Theorem~\ref{th:g}, we have:

\begin{theorem}\label{th:r}
Any tower of coverings of Riemann surfaces with the top manifold simply-connected is Bergman stable.
\end{theorem}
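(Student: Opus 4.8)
The plan is to apply the uniformization theorem to $\widetilde M$ and to treat separately the three cases $\widetilde M\cong\mathbb{P}^1$, $\widetilde M\cong\C$, and $\widetilde M\cong\D$. The first two are elementary. If $\widetilde M\cong\mathbb{P}^1$, then every holomorphic automorphism of $\widetilde M$ has a fixed point, so the only group acting freely and properly discontinuously is trivial and the tower is constant. If $\widetilde M\cong\C$, then the fixed-point-free automorphisms of $\C$ are exactly the nonzero translations, so $\Gamma$ is a discrete group of translations, hence free abelian of rank $0$, $1$, or $2$. In rank $0$ the tower is constant; in rank $1$ each $M_j$ is biholomorphic to $\C^\ast$, on which a Laurent-series estimate shows that there are no nonzero $L^2$ holomorphic $(1,0)$-forms, and likewise on $\C$, so all the Bergman kernels vanish; in rank $2$ each $M_j$ is a flat torus whose Bergman kernel is $(\operatorname{Area} M_j)^{-1}$ times the flat form, and since $\operatorname{Area} M_j=[\Gamma:\Gamma_j]\operatorname{Area} M\to\infty$ while the Bergman kernel of $\C$ again vanishes, the pulled-back kernels converge to that of $\C$ locally uniformly. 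In each of these cases Bergman stability is immediate.

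It remains to treat $\widetilde M\cong\D$, which we equip with its Poincar\'e metric $\widetilde\omega$. Hypothesis~(2) of Theorem~\ref{th:g} holds here with $\widetilde\psi=-\log(1-|z|^2)$: a direct computation gives $\partial\bar\partial\widetilde\psi$ equal to a fixed positive multiple of $\widetilde\omega$ and $\partial\widetilde\psi\wedge\bar\partial\widetilde\psi=|z|^2\,\partial\bar\partial\widetilde\psi\le\partial\bar\partial\widetilde\psi$, so both required inequalities hold. Equip $M=\D/\Gamma$ with the push-down of $\widetilde\omega$, so that $M$ and all the $M_j$ are complete K\"ahler manifolds. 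If $\Gamma$ is trivial the tower is constant. If $M$ is compact (necessarily of genus $\ge2$), then hypothesis~(1) of Theorem~\ref{th:g} holds vacuously with $K=M$, and Theorem~\ref{th:g} gives Bergman stability (this recovers the theorem of Rhodes, and is also contained in Theorem~\ref{th:eff}). If $M$ is a hyperbolic Riemann surface, Theorem~\ref{th:hyp} applies directly. The remaining possibility is that $M$ is noncompact and carries no Green function---for instance a once-punctured torus, or more generally a finite-area surface with cusps---and in this case I expect the verification of hypothesis~(1) of Theorem~\ref{th:g} to be the main obstacle.

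For that verification the model computation is at a cusp. In a holomorphic coordinate $w$ with the cusp at $w=0$, the Poincar\'e form is, up to a constant (and exactly in a suitable coordinate), $|w|^{-2}(\log|w|)^{-2}\,\tfrac{i}{2}\,dw\wedge d\bar w$, and the subharmonic function $\psi=-\log\!\big(-\log|w|^2\big)$ satisfies, on a punctured neighborhood of the cusp, that $\partial\bar\partial\psi$ and $\partial\psi\wedge\bar\partial\psi$ are both equal to the same positive multiple of the Poincar\'e form; hence both $C^{-1}\omega\le\partial\bar\partial\psi\le C\omega$ and $\partial\bar\partial\psi\ge C^{-1}\partial\psi\wedge\bar\partial\psi$ hold there. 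Choosing $K$ to be the complement in $M$ of disjoint such neighborhoods of the ends of $M$ then assembles these local potentials into a function $\psi$ on $M\setminus K$ satisfying hypothesis~(1), and Theorem~\ref{th:g} applies. The delicate point is to control the Poincar\'e metric near the ends of an arbitrary noncompact Riemann surface with universal cover $\D$ and no Green function, so that such a potential can be exhibited (or at least shown to exist) at every end; granting this, the cases above exhaust all possibilities and the tower is Bergman stable in each of them.
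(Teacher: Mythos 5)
Your case analysis matches the paper's through the first three cases ($\mathbb{P}^1$, $\mathbb{C}$, and the $\mathbb{D}$ cases where $M$ is compact or hyperbolic), and your verification of hypothesis~(2) of Theorem~\ref{th:g} with $\widetilde\psi=-\log(1-|z|^2)$ is exactly right. The gap is in the last case, and you have correctly identified it yourself: for a general noncompact parabolic Riemann surface $M=\D/\Gamma$, the ends need not be cusps, and the Poincar\'e metric near an arbitrary end of such a surface has no reason to look like $|w|^{-2}(\log|w|)^{-2}\frac{i}{2}dw\wedge d\bar w$. Parabolic $M$ with universal cover $\D$ can have infinite genus, infinitely many ends, or ends with wild ideal boundary; there is no global coordinate model, so the local cusp potential $\psi=-\log(-\log|w|^2)$ cannot be assembled into a potential satisfying hypothesis~(1) on $M\setminus K$. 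The "granting this" in your last sentence is precisely the unproved step, and it does not follow from anything you have written.

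The paper avoids this obstacle by \emph{not} using the Poincar\'e metric on $M$ and by not using Theorem~\ref{th:g} directly. It invokes a theorem of Nakai on Evans potentials: on a parabolic Riemann surface there exists a harmonic function $u$ on the complement of a compact set with $u\to+\infty$ at the ideal boundary. After regularizing $-u$ to a $C^\infty$ strictly subharmonic $\phi$ via Richberg's theorem, the paper sets $\psi=-\log(-\phi)$ and \emph{builds} a complete Hermitian metric on $M$ of the form $\omega=C_1\kappa\,\omega_0+\partial\bar\partial\bigl((1-\chi)\psi\bigr)$; this is where the exhaustion coming from the Evans potential is essential, since it makes $\omega$ complete and forces $\omega=\partial\bar\partial\psi$ near infinity, which is what the weaker hypothesis of Theorem~\ref{th:g1} requires. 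On the $\D$ side they use a combination $\widetilde\psi(z)=C_2(-\log(1-|z|^2))+\widetilde p^*((1-\chi)\psi)$ so that $\partial\bar\partial\widetilde\psi$ dominates the lift $\widetilde\omega$. Notice in particular that Theorem~\ref{th:g1}, not Theorem~\ref{th:g}, is the right tool here: it replaces the two-sided bound $C^{-1}\omega\le\partial\bar\partial\psi\le C\omega$ with the exact identity $\omega=\partial\bar\partial\psi$ outside $K$ and only a one-sided bound upstairs, which is what the construction from the Evans potential actually delivers. So the missing ideas in your proposal are (i) Nakai's theorem supplying a globally defined exhaustion near infinity on an arbitrary parabolic surface, (ii) the freedom to choose a new Hermitian metric rather than the Poincar\'e metric, and (iii) the variant Theorem~\ref{th:g1} tailored to this construction.
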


Our proof of Theorem~\ref{th:g} uses Donnelly-Fefferman type $L^2$-estimates \cite{DonnellyFefferman83} for
the $\dbar$-Laplacian.  It also uses spectral theory: the conditions on $\widetilde M$ and $M$ ensure that
the spectrum and essential spectrum of the respective complex Laplacian on $(n, 1)$-forms on $\widetilde M$ and $M$ are positive. However, here instead of estimating the heat kernel as in \cite{Donnelly96}, we study the spectral (Bergman) kernel. This enables us to streamline the arguments and replace curvature
conditions by potential theoretic conditions on manifolds $M$ and $\widetilde M$.

This paper is organized as follows. In Section~\ref{sec:prelim}, we review basic definitions and properties
of towers of coverings and the Bergman kernel. Theorem~\ref{th:eff} is proved in Section~\ref{sec:eff}.
Section~\ref{sec:compact} contains a discussion on the connection between Kazhdan's inequality and stability
of the Bergman kernel on towers of coverings on compact complex manifolds. Theorem~\ref{th:hyp} is proved in
Section~\ref{sec:hyp} and Theorem~\ref{th:g} in Section~\ref{sec:g}. Applications of Theorem~\ref{th:g} to
quotients of the ball and polydisc are given in Section~\ref{sec:app}.

Throughout the paper, we will use $C$ to denote a positive constant which may be different in different
appearances. We will also use $f\gtrsim g$ to denote $f\ge Cg$ where $C$ is a constant, its independence of
certain parameters being clear from the contexts, and use $A\approx B$ to denote $A\gtrsim B$ and $B\gtrsim
A$.

\section{Preliminaries}\label{sec:prelim}

We briefly recall the definition of the Bergman kernel and metric on a complex manifold (see
\cite{Kobayashi59}).  Let $X$ be a complex manifold and $A^2_{(n, 0)}(X)$ space of square integrable
holomorphic $(n, 0)$-forms $f$ equipped with the inner product
\begin{equation}\label{eq:l2}
\langle f, g\rangle=i^{n^2} 2^{-n} \int_X f\wedge \overline{g}.
\end{equation}
The Bergman kernel is an $2n$-form on $X\times X$ given by
\begin{equation}\label{eq:berg}
K_X(z, w)=\sum_{j=1}^\infty b_j(z)\wedge \overline{b_j(w)}
\end{equation}
where $\{b_j\}$ is an orthonormal basis for $A^2_{(n, 0)}(X)$.  Write $K^z_X(\cdot)=K_X(z, \cdot)$. Then the Bergman kernel has the following reproducing property:
\begin{equation}\label{eq:reproducing}
f(z)=(-1)^{n^2}\int_X K^z_X\wedge f, \qquad \forall f\in A^2_{(n, 0)}(X).
\end{equation}
The Bergman kernel on diagonal
$K_X(z)=K_X(z, z)$ is a biholomorphically invariant $(n, n)$-form on $X$ with the extremal property:
\begin{equation}\label{eq:extreme}
K_X(z)=\max\{f(z)\wedge \overline{f(z)} \mid f\in A^2_{(n, 0)}(X), \|f\|=1\}
\end{equation}
and the decreasing property: $K_X(z)\le K_Y(z)$ if $Y$ is a subdomain of $X$.  Given a local holomorphic
coordinate chart $(z_1, \ldots, z_n)$, write $\omega_n=\wedge_{j=1}^n (\frac{i}2 dz_j\wedge d\bar z_j)$
and
\[
K_X(z)=K^*(z) \omega_n.
\]
When $K^*(z)>0$, the Bergman (pseudo-)metric is given by
\[
ds^2_X=\sum_{j, k=1}^n \frac{\partial^2\log K^*(z)}{\partial z_j\partial\bar z_k} dz_j d\bar z_k.
\]
The Bergman metric is a bihilomorphically invariant metric and it can be regarded as the pull-back of the
Fubini-Study metric of (possibly infinitely dimensional) complex projective spaces (\cite{Kobayashi59}).

We review elements of the $L^2$-cohomology theory for the $\bar{\partial}-$operator. Let $(M,\omega)$
be a complex hermitian manifold of complex dimension $n$. Let $C_0^{p,q}(M)$ be the space of $C^\infty$
$(p,q)-$forms with compact supports on $M$ and let $L^{p,q}_{(2)}(M)$ be the completion of $C^{p,q}_0(M)$
with respect to the following $L^2-$norm
$$
\|u\|=\left(\int_M |u|^2dV\right)^{1/2}
$$
where $|\cdot|$ is the point-wise norm corresponding to $\omega$ and $dV$ the volume form. The weak
maximal extension $\bar{\partial}:L^{p,q}_{(2)}(M)\rightarrow L^{p,q+1}_{(2)}(M)$ is a densely
defined closed operator. Let $\bar{\partial}^\ast$ be the adjoint of $\bar{\partial}$. Then the
$\bar{\partial}$-Laplacian is $\Box=\bar{\partial}\bar{\partial}^\ast+\bar{\partial}^\ast\bar{\partial}$
and the space of $L^2$-harmonic $(p,q)-$forms is given by
$$
{\mathcal H}^{p,q}_{(2)}(M)=\left\{u\in L^{p,q}_{(2)}(M):\Box u=0\right\}=\left\{u\in
L^{p,q}_{(2)}(M):\bar{\partial}u=\bar{\partial}^\ast u=0\right\}.
$$

We now review relevant basic facts on covering spaces. Let $(\widetilde{M}, \widetilde{\omega})$ be a
Riemannian manifold. Let $\Gamma$ be a subgroup of the isometrics that acts freely and properly
discontinuously on $\widetilde{M}$. (Recall that $\Gamma$ acts {\it freely} if the identify map is the only
element in $\Gamma$ that has a fixed point and {\it properly discontinuously} if for any compact set $K$,
there is only finitely many $\gamma\in\Gamma$ such that $K\cap \gamma K\not=\emptyset$.) Let
$M=\widetilde{M}/\Gamma$ be the quotient manifold and $\pi\colon \widetilde M\to M$ the covering map. We
equip $M$ with the push-down metric $\omega$ from $\widetilde{M}$ so that
$\pi^*(\omega)=\widetilde{\omega}$.   Denote by $d_{\widetilde M}$ and $d_M$ the distances on
$\widetilde{M}$ and $M$ associated with $\widetilde{\omega}$ and $\omega$ respectively. For
$x\in\widetilde{M}$, let
\begin{equation}\label{eq:dirichlet}
D(x)=\{y\in\widetilde{M} \mid d_{\widetilde M}(y, x) < d_{\widetilde M}(y, \gamma x),
\forall\gamma\in\Gamma\setminus\{1\}\}
\end{equation}
be the {\it Dirichlet fundamental domain} with center at $x$.  It is easy to see that no pair of points in
$D(x)$ are equivalent under $\Gamma$ and every point in $\widetilde{M}$ has an equivalence in $D(x)$ or its
boundary. Let
\begin{equation}\label{eq:tau}
\tau(x)=\frac12\inf\left\{d_{\widetilde{M}}(x,\gamma x):\gamma\in \Gamma\setminus\{1\}\right\}.
\end{equation}
Evidently, the geodesic ball $B(x, \tau(x))$ is contained in $D(x)$. Moreover, when $\widetilde{M}$ has not
conjugate points (i.e., any two points are joint uniquely--up to reparametrization--by a geodesic),
$\tau(x)$ is the injectivity radius of $\pi(x)$ in $M$. In particular, this is the case when $\widetilde
M=\D$, the unit disc.

Let $\{\Gamma_j\}$ be a tower of subgroups of $\Gamma$. Denote by $\tau_j(x)$ the quantity defined by
\eqref{eq:tau} with $\Gamma$ replaced by $\Gamma_j$. Since $\tau_j(\cdot)$ is invariant under $\Gamma_j$, it
can be pushed down onto $M_j$.  The following lemma is well known (compare, e.g.,
\cite[Theorem 2.1]{DeGeorgeWallach78} and \cite[Lemma~2.1]{Donnelly83}):

\begin{lemma}\label{lm:inj} $\tau_j(x)$ is an increasing sequence of positive continuous functions such
that $\tau_j(x)\rightarrow \infty$ locally uniformly on $\widetilde{M}$  as $j\rightarrow \infty$.
\end{lemma}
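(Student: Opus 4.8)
The plan is to establish the three assertions of the lemma in turn, and I expect only the last --- the locally uniform divergence --- to require any real work. Monotonicity is immediate from \eqref{eq:tau}: since $\Gamma_{j+1}\subset\Gamma_j$, the infimum defining $\tau_{j+1}(x)$ is taken over a subset of the one defining $\tau_j(x)$, so $\tau_{j+1}(x)\ge\tau_j(x)$; in particular $\tau_j(x)\ge\tau_1(x)=\tau(x)$, so it is enough to check positivity and continuity for $\tau$ itself. For positivity I would fix $x\in\widetilde M$, choose $r>0$ with $\overline B(x,r)$ compact, and note that proper discontinuity (applied to $\{x\}\cup\overline B(x,r)$) leaves only finitely many $\gamma\in\Gamma\setminus\{1\}$ with $\gamma x\in\overline B(x,r)$, each of which moves $x$ a positive distance since $\Gamma$ acts freely, while every other $\gamma$ moves $x$ by more than $r$; hence $\inf_{\gamma\neq1}d_{\widetilde M}(x,\gamma x)>0$. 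For continuity I would observe that, for each fixed $\gamma$, the function $x\mapsto\frac12 d_{\widetilde M}(x,\gamma x)$ is $1$-Lipschitz --- this is just the triangle inequality together with the fact that $\gamma$ is an isometry --- so that $\tau$ (and likewise each $\tau_j$), being a finite infimum of $1$-Lipschitz functions, is $1$-Lipschitz, hence continuous.

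The heart of the matter is the locally uniform divergence. I would first reduce it to the pointwise statement $\tau_j(x)\to\infty$: granting that, then on a compact $K\subset\widetilde M$ and for any $R>0$ the sets $\{\tau_j>R\}$ are open (by continuity), increasing (by monotonicity), and cover $K$, so finitely many of them cover $K$, hence by nesting a single one already does, which gives $\tau_j>R$ on $K$ for all large $j$; this is a Dini-type argument. For the pointwise statement I would argue by contradiction. If $\tau_j(x)$ fails to diverge it is, being increasing, bounded, say $\tau_j(x)<R$ for all $j$; then for each $j$ there is $\gamma_j\in\Gamma_j\setminus\{1\}$ with $d_{\widetilde M}(x,\gamma_j x)<2R$, so every $\gamma_j$ lies in the set $\{\gamma\in\Gamma:\gamma x\in\overline B(x,2R)\}$, which is finite by proper discontinuity. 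Consequently some fixed $\gamma\neq1$ coincides with $\gamma_j$ for infinitely many $j$, and since the $\Gamma_j$ are nested this forces $\gamma\in\Gamma_j$ for every $j$, contradicting $\bigcap_j\Gamma_j=\{1\}$.

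The one delicate ingredient, which I would single out as the main obstacle, is the compactness input in the last step: one needs the closed ball $\overline B(x,2R)$ to be compact in order for proper discontinuity to apply to the orbit points $\gamma_j x$. This is automatic whenever $\widetilde M$ is complete --- which covers every situation considered in this paper (towers on $\D$, on hyperbolic Riemann surfaces, and on complete K\"ahler manifolds) --- and in the general Riemannian setting one instead combines local compactness of $\widetilde M$ with the precise form of proper discontinuity assumed above. Everything else is point-set topology and the triangle inequality.
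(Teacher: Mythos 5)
Your proposal is correct and follows essentially the same route as the paper: monotonicity from nesting of the $\Gamma_j$, positivity and pointwise divergence from proper discontinuity plus $\bigcap_j\Gamma_j=\{1\}$, continuity from the triangle inequality together with the fact that each $\gamma$ is an isometry, and locally uniform divergence via a Dini-type open-cover argument. The only stylistic difference is in the continuity step: the paper notes upper semi-continuity (infimum of continuous functions) and then proves lower semi-continuity by showing the superlevel sets $\{\tau_j>\alpha\}$ are open, whereas you observe directly that each $x\mapsto\tfrac12 d_{\widetilde M}(x,\gamma x)$ is $1$-Lipschitz, so the infimum is $1$-Lipschitz --- a slightly cleaner and stronger conclusion. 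One small imprecision: you write \emph{finite} infimum of $1$-Lipschitz functions, but $\Gamma_j\setminus\{1\}$ may well be infinite; what saves the argument is that an infimum over an \emph{arbitrary} family of $1$-Lipschitz functions is again $1$-Lipschitz wherever it is finite-valued (and you have already established finiteness via positivity). Your remark about needing closed balls to be compact is a fair observation of a hypothesis the paper leaves implicit; it is harmless here since every ambient manifold in the paper is complete.
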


\begin{proof} For the reader's convenience, we include a proof here. Since $\tau_j(x)$ is the infinum of a
sequence of continuous functions, it is itself upper semi-continuous. To prove that $\tau_j(x)$ is
continuous, it suffices to show that $A_\alpha=\{x\in\widetilde M \mid \tau_j(x)>\alpha\}$ is open for any
$\alpha\ge 0$. Let $x_0\in A_\alpha$. Choose $\eps>0$ sufficiently small so that $\tau_j(x_0)>\alpha+\eps$.
Then for any $x\in B(x_0, \eps/2)$ and $\gamma\in\Gamma_j$,
\[
\begin{aligned}
d_{\widetilde M}(x, \gamma x)&\ge d_{\widetilde M}(x_0, \gamma x_0)-d_{\widetilde M}(x, x_0)-d_{\widetilde
M}(\gamma x, \gamma x_0)\\ &=d_{\widetilde M}(x_0, \gamma x_0)-2d_{\widetilde M}(x, x_0)\ge 2\alpha+\eps.
\end{aligned}
\]
Thus $B(x_0, \eps/2)\subset A_\alpha$. Therefore, $A_\alpha$ is open and hence $\tau_j(x)$ is continuous.
It follows from the proper discontinuous property of $\Gamma$ that $\tau_j(x)>0$ and $\tau_j(x)\to\infty$
for any $x\in\widetilde M$. Since $\Gamma_j\supset\Gamma_{j+1}$, $\tau_j(x)\le\tau_{j+1}(x)$. It then
follows from Dini's theorem that $\tau_j(x)\to\infty$ locally uniformly on $\widetilde M$. \end{proof}

Hereafter, we assume that $(\widetilde{M}, \widetilde{\omega})$ is a complex Hermitian manifolds and
$\Gamma\subset {\rm Aut(\widetilde{M})}$,  the automorphism group of $\widetilde{M}$. Let $M_j=\widetilde
M/\Gamma_j$ and $\widetilde{p}_j\colon \widetilde{M}\to M_j$ be the natural projection. Throughout the
paper, when it is contextually clear, we will identify $K_{\widetilde M}$ with its push-down to $M$ and
likewise $K_{M_j}$ with its pull-back on $\widetilde M$ for the economy of notations. The following
proposition establishes the upper semi-continuity of the Bergman kernels $K_{M_j}$ on a tower of coverings on
complex manifolds (compare \cite[Proposition 1.2]{Donnelly96}).

\begin{proposition}\label{prop:upper}
For each $z\in \widetilde{M}$, ${\lim\sup}_{j\rightarrow \infty} \widetilde{p}_j^*(K_{M_j})(z)\le
K_{\widetilde{M}}(z)$.
\end{proposition}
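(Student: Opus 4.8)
The plan is to represent each $K_{M_j}(z)$ by an extremal form, lift these forms to $\widetilde M$, and extract a locally uniform limit that then competes for $K_{\widetilde M}(z)$. By the extremal property \eqref{eq:extreme}, for each $j$ I would choose $f_j\in A^2_{(n,0)}(M_j)$ with $\|f_j\|_{M_j}=1$ and $f_j(z)\wedge\overline{f_j(z)}=K_{M_j}(z)$ (if $K_{M_j}(z)=0$ there is nothing to record for that $j$). Then $u_j:=\widetilde{p}_j^*f_j$ is a $\Gamma_j$-invariant holomorphic $(n,0)$-form on $\widetilde M$ with $\widetilde{p}_j^*(K_{M_j})=u_j\wedge\overline{u_j}$. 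Since a Dirichlet fundamental domain $D_j(x)$ for $\Gamma_j$ contains no two $\Gamma_j$-equivalent points and its $\Gamma_j$-translates cover $\widetilde M$ up to a null set, $\|u_j\|_{L^2(D_j(x))}=\|f_j\|_{M_j}=1$ for every center $x$.

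Next I would extract uniform local control. Given any compact $K\subset\widetilde M$, pick $x_0\in K$; since $\tau_j(x_0)\to\infty$ by Lemma~\ref{lm:inj} and $K$ has finite diameter, for $j$ large the geodesic ball $B(x_0,\tau_j(x_0))$ contains $K$, so $K\subset D_j(x_0)$ and hence $\|u_j\|_{L^2(K)}\le 1$ for all large $j$. Applying the sub-mean value property of holomorphic functions to the coefficient of $u_j$ in a fixed chart converts these $L^2$-bounds into uniform sup-bounds on compacts; thus, by a normal-families argument, some subsequence of $\{u_j\}$ converges locally uniformly on $\widetilde M$ to a holomorphic $(n,0)$-form $u$. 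Passing the bound $\|u_j\|_{L^2(K)}\le1$ through the limit gives $\|u\|_{L^2(K)}\le1$ for every compact $K$, hence $u\in A^2_{(n,0)}(\widetilde M)$ with $\|u\|\le1$, and then \eqref{eq:extreme} on $\widetilde M$ yields $u(z)\wedge\overline{u(z)}\le K_{\widetilde M}(z)$ (since $\|u\|\le1$).

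To finish I would chase the $\limsup$. The uniform bounds above show that $\widetilde{p}_j^*(K_{M_j})(z)=u_j(z)\wedge\overline{u_j(z)}$ stays bounded as $j\to\infty$, so I pick a subsequence along which it converges to $\limsup_{j}\widetilde{p}_j^*(K_{M_j})(z)$, thin it further so that $u_j\to u$ locally uniformly as above, and evaluate at $z$: $\limsup_{j}\widetilde{p}_j^*(K_{M_j})(z)=u(z)\wedge\overline{u(z)}\le K_{\widetilde M}(z)$, which is the claim.

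The one point that needs care, I expect, is the interaction between the $\Gamma_j$-periodicity of $u_j$ and $L^2$-control: each $u_j$ has infinite $L^2$-norm on all of $\widetilde M$, so the bound that survives in the limit is genuinely local, and it is available only because the fundamental domains of $\Gamma_j$ exhaust $\widetilde M$, i.e. because of Lemma~\ref{lm:inj}. The remaining ingredients---the extremal characterization \eqref{eq:extreme}, the mean value estimate, and the normal-families compactness---are standard.
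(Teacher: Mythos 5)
Your argument is correct, but it follows a genuinely different route from the paper's. The paper's proof is a short appeal to domain monotonicity of the Bergman kernel: since $\widetilde{p}_j$ restricted to the Dirichlet fundamental domain $D_j(z)$ is a biholomorphism onto a subdomain of $M_j$, and $B(z,\tau_j(z))\subset D_j(z)$, one obtains
\[
\widetilde{p}_j^*(K_{M_j})(z)\le K_{D_j(z)}(z)\le K_{B(z,\tau_j(z))}(z),
\]
and Lemma~\ref{lm:inj} together with Ramadanov's theorem (Bergman kernels of an increasing exhaustion converge to the kernel of the union) finishes the proof. You instead realize $\widetilde{p}_j^*(K_{M_j})(z)$ by extremal forms from \eqref{eq:extreme}, observe that their lifts $u_j$ have $L^2$-norm at most $1$ on every Dirichlet fundamental domain (note that injectivity of $\widetilde{p}_j$ on $D_j(x)$ already gives the inequality $\|u_j\|_{L^2(D_j(x))}\le\|f_j\|_{M_j}$, so you do not even need the boundary of $D_j(x)$ to be a null set), then use Lemma~\ref{lm:inj} to promote this into uniform local $L^2$-control on compacta, and finally extract a normal-families limit $u\in A^2_{(n,0)}(\widetilde M)$ with $\|u\|\le 1$ whose value at $z$ dominates the limsup. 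This is essentially a self-contained re-derivation of the relevant special case of Ramadanov's theorem inside the covering setting, at the cost of a longer argument. Both proofs hinge on Lemma~\ref{lm:inj} and on the biholomorphic identification of the fundamental domain with a subdomain of $M_j$; the difference is whether one cites monotonicity plus Ramadanov, or runs the $L^2$-plus-compactness argument directly.
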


\begin{proof}  Let $D_j(z)\subset\widetilde M$ be the Dirichlet fundamental domain of $M_j$ as defined in
\eqref{eq:dirichlet} by replacing $\Gamma$ by $\Gamma_j$.  Then $\widetilde{p}_j$ maps $D_j(z)$
biholomorphically onto its image $\widetilde{p}_j(D_j(z))$.  It follows from the decreasing property of the
Bergman kernel that
\[
K_{M_j}(\widetilde{p}_j(z))\le K_{\widetilde{p}_j(D_j(z))}(\widetilde{p}_j(z)).
\]
Since $B(z, \tau_j(z))\subset D_j(z)$, we have
\begin{equation}\label{eq:rama}
\widetilde{p}^*_j(K_{M_j})(z)\le \widetilde{p}^*_j(K_{\widetilde{p}_j(D_j(z))})(z)=K_{D_j(z)}(z)\le K_{B(z,
\tau_j(z))}(z).
\end{equation}
We then conclude the proof by applying Lemma~\ref{lm:inj} and Ramadanov's theorem (\cite{Ramadanov67}).
\end{proof}

As an application of the above proposition, we provide a proof of the following version of Kazhdan's
inequality (see \cite[Theorem 1 and its proof]{Kazhdan83}; also \cite[pp.~13 and pp.~153]{Gromov93}):

\begin{proposition}\label{prop:kaz}  When $M$ is compact,
\begin{equation}\label{eq:kaz}
\limsup_{j\to\infty}\frac{h^{n, 0}(M_j)}{[\Gamma:\Gamma_j]}\le \int_{M} K_{\widetilde M}.
\end{equation}
\end{proposition}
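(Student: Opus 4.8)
The plan is to compare the global holomorphic $(n,0)$-forms on the compact manifold $M_j$ with the Bergman kernel on the diagonal, and then average over a fundamental domain. Since $M_j$ is compact, $A^2_{(n,0)}(M_j)$ coincides with $H^0(M_j, K_{M_j})$, so $h^{n,0}(M_j)=\dim A^2_{(n,0)}(M_j)$. First I would integrate the Bergman kernel on the diagonal over all of $M_j$: choosing an orthonormal basis $\{b_k\}_{k=1}^{h^{n,0}(M_j)}$ and using \eqref{eq:berg} together with the normalization \eqref{eq:l2}, one gets $\int_{M_j} K_{M_j} = \sum_k \langle b_k, b_k\rangle = h^{n,0}(M_j)$ (up to the dimensional constant $i^{n^2}2^{-n}$ absorbed in the definition of the pairing, which is exactly how $K_X$ is set up in \eqref{eq:berg}). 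Hence $h^{n,0}(M_j)=\int_{M_j} K_{M_j}$.

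Next I would lift this integral to $\widetilde M$. The covering $\widetilde p_j\colon \widetilde M\to M_j$ has degree $[\Gamma:\Gamma_j]$, and $M=\widetilde M/\Gamma$ is a further quotient of $M_j$; equivalently, a Dirichlet fundamental domain $D(x)$ for the $\Gamma$-action tiles $\widetilde M$ and its image in $M_j$ covers $M_j$ exactly $[\Gamma:\Gamma_j]$ times (more precisely, $\widetilde p_j(D(x))$ is, up to a measure-zero set, a finite union of $[\Gamma:\Gamma_j]^{-1}$-fraction... — cleaner: $\int_{M_j}K_{M_j}$ pulled back to $\widetilde M$ restricted to $D(x)$ equals $[\Gamma:\Gamma_j]^{-1}\int_{M_j}K_{M_j}$, since $M_j$ is tiled by $[\Gamma:\Gamma_j]$ copies of $\widetilde p_j(D(x))$ under $\Gamma/\Gamma_j$). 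Therefore
\[
\frac{h^{n,0}(M_j)}{[\Gamma:\Gamma_j]}=\frac{1}{[\Gamma:\Gamma_j]}\int_{M_j}K_{M_j}=\int_{D(x)}\widetilde p_j^*(K_{M_j}).
\]
Now I would take $\limsup_{j\to\infty}$ on both sides. Proposition~\ref{prop:upper} gives $\limsup_j \widetilde p_j^*(K_{M_j})(z)\le K_{\widetilde M}(z)$ pointwise on $\widetilde M$, and the integrands are nonnegative $(n,n)$-forms, so by Fatou's lemma for the reverse inequality (applied to $K_{\widetilde M}-\widetilde p_j^*(K_{M_j})$ when this is eventually controlled, or directly via the dominated/Fatou argument for $\limsup$ of nonnegative functions),
\[
\limsup_{j\to\infty}\int_{D(x)}\widetilde p_j^*(K_{M_j})\le \int_{D(x)} K_{\widetilde M}=\int_M K_{\widetilde M},
\]
the last equality because $D(x)$ is a fundamental domain for $\Gamma$ on $\widetilde M$ and $K_{\widetilde M}$ is $\Gamma$-invariant (hence descends to $M$). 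Combining the displays yields \eqref{eq:kaz}.

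The main subtlety — and the step I would write most carefully — is the interchange of $\limsup$ with the integral over $D(x)$. Fatou's lemma applies to $\liminf$ of nonnegative integrands, giving the wrong direction; for $\limsup$ one needs a domination. The standard fix: by \eqref{eq:rama}, $\widetilde p_j^*(K_{M_j})(z)\le K_{B(z,\tau_j(z))}(z)\le K_{B(z,\tau_1(z))}(z)$ once... no, $\tau_j$ increases so the ball grows and the Bergman kernel decreases; thus $\widetilde p_j^*(K_{M_j})(z)\le K_{B(z,\tau_1(z))}(z)=:F(z)$ for all $j$, and $F$ is integrable on $D(x)$ (indeed $\int_{D(x)}F$ is comparable to the finite quantity $[\Gamma:\Gamma_1]^{-1}\int_{M_1}K_{M_1}=h^{n,0}(M_1)/[\Gamma:\Gamma_1]<\infty$ by the same tiling argument applied to $\Gamma_1$, after noting $F(z)\le K_{D_1(z)}(z)=\widetilde p_1^*(K_{M_1})(z)$). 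With this uniform integrable majorant, the reverse Fatou lemma justifies the interchange, completing the argument.
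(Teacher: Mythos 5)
Your overall strategy is the same as the paper's: reduce $h^{n,0}(M_j)/[\Gamma:\Gamma_j]$ to an integral of $\widetilde p_j^*(K_{M_j})$ over a fixed fundamental domain, then combine the pointwise upper bound from Proposition~\ref{prop:upper} with a uniform integrable majorant to pass to the $\limsup$ inside the integral. The paper works directly on $M$ (pushing $K_{M_j}$ down rather than integrating over $D(x)$), but that is merely a change of bookkeeping, not a different argument.

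However, your justification of the integrability of the majorant $F(z)=K_{B(z,\tau_1(z))}(z)$ contains a genuine error: you claim $F(z)\le K_{D_1(z)}(z)=\widetilde p_1^*(K_{M_1})(z)$, but both relations are in the wrong direction. Since $B(z,\tau_1(z))\subset D_1(z)$ and $\widetilde p_1(D_1(z))\subset M_1$, the decreasing property of the Bergman kernel gives
\[
\widetilde p_1^*(K_{M_1})(z)\ \le\ K_{D_1(z)}(z)\ \le\ K_{B(z,\tau_1(z))}(z)=F(z),
\]
i.e. $F$ dominates $\widetilde p_1^*(K_{M_1})$, not the other way around, so you cannot conclude $\int_{D(x)}F<\infty$ from $h^{n,0}(M_1)<\infty$. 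The correct route to integrability is compactness of $\overline{D(x)}$ (this is where the hypothesis that $M$ is compact enters, and it is exactly what the paper does with its finite cover by geodesic balls): $\tau_1$ is continuous and positive, so $\tau_1\ge\varepsilon>0$ on $\overline{D(x)}$, whence $F(z)\le K_{B(z,\varepsilon)}(z)$; covering $\overline{D(x)}$ by finitely many small balls $B(z_k,r_k/2)$ with $r_k<\varepsilon$, one gets $F(z)\le K_{B(z_k,r_k/2)}(z)$ for $z\in B(z_k,r_k/2)$, and a routine continuity argument on each fixed ball then yields a finite uniform bound. Once this replacement is made, the rest of your argument (reverse Fatou / dominated convergence, $\Gamma$-invariance of $K_{\widetilde M}$, and $\int_{D(x)}K_{\widetilde M}=\int_M K_{\widetilde M}$) is correct and reproduces the paper's proof.
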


\begin{proof} Since $M$ is compact, there exists a compact set $A\subset\widetilde M$ such that
$\widetilde{p}(A)=M$, where $\widetilde{p}=\widetilde{p}_1$ is as before the natural projection from
$\widetilde M$ onto $M=M_1$.  Let $\eps$ be the minimum of $\tau(x)$ on $A$.  We cover $A$ by finitely many
geodesic balls $\{B(z_k, r_k/2)\}_{k=1}^m$ with $z_k\in A$ and $r_k<\eps$ such that each $B(z_k, r_k)$ is
contained in a normal neighborhood in $\widetilde M$. It follows from \eqref{eq:rama} that for $z\in B(z_k,
r_k/2)$,
\[
\widetilde{p}^*_j(K_{M_j})(z)\le K_{B(z, \tau(z))}\le K_{B(z_k, r_k/2)}(z).
\]
It then follows that $K_{M_j}$ is uniformly bounded from above on $M$. (Here we identify $K_{M_j}$ with its
push-down onto $M$.)  Inequality \eqref{eq:kaz} is then obtained by integrating both sides of the
inequality in Proposition~\ref{prop:upper} over $M$, the dominated convergence theorem, and the fact that
\[
\int_{M} K_{M_j}=\frac{1}{[\Gamma:\Gamma_j]}\int_{M_j} K_{M_j}=\frac{h^{n, 0}(M_j)}{[\Gamma:\Gamma_j]}.
\]
\end{proof}

The following proposition establishes the link between the convergence of the Bergman kernel and the Bergman metric
(compare \cite{Ramadanov67}).

\begin{proposition}\label{prop:km} Suppose the Bergman kernel $K_{\widetilde M}(z, z)$ is positive and the
tower of coverings $M_j$ is Bergman stable. Then the pull-back of Bergman metric of $M_j$ converges
locally uniformly to the Bergman metric on $\widetilde{M}$.
\end{proposition}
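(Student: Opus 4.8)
The plan is a Ramadanov-type argument: first upgrade the hypothesized $C^0$-convergence of the Bergman kernels on the diagonal to local uniform convergence of the full kernel coefficients and, via Cauchy estimates, of all their derivatives; then observe that the Bergman metric is a rational expression in the diagonal kernel and its first and mixed second derivatives, whose denominator the positivity hypothesis on $K_{\widetilde M}$ keeps uniformly away from zero. Fix $p\in\widetilde M$ and a connected holomorphic coordinate ball $(U; z_1,\dots, z_n)$ centered at $p$. Let $X$ be either $\widetilde M$ or one of the $M_j$; for $X=M_j$ we identify $K_{M_j}$ with its pull-back to $\widetilde M$ under $\widetilde{p}_j$, as in the paper. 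Write an orthonormal basis of $A^2_{(n,0)}(X)$ (respectively, its pull-back) as $b_\alpha=\beta_\alpha\, dz_1\wedge\cdots\wedge dz_n$ on $U$ with $\beta_\alpha$ holomorphic, and set $\kappa_X(z,w)=\sum_\alpha\beta_\alpha(z)\overline{\beta_\alpha(w)}$ for $z,w\in U$ --- a function holomorphic in the $2n$ complex variables $(z_1,\dots,z_n,\bar w_1,\dots,\bar w_n)$ --- and $\phi_X(z)=\kappa_X(z,z)=\sum_\alpha|\beta_\alpha(z)|^2$. Then $\phi_X$ is a fixed nonzero constant multiple of $K^*_X$, so, since $\partial_{z_\mu}\partial_{\bar z_\nu}\log$ is insensitive to nonzero constant factors, wherever $\phi_X>0$ the Bergman metric of $X$ equals $\sum_{\mu,\nu}\partial_{z_\mu}\partial_{\bar z_\nu}\log\phi_X\, dz_\mu d\bar z_\nu$; moreover, as $\widetilde{p}_j$ is a local biholomorphism, the pull-back to $U$ of the Bergman metric of $M_j$ is given by the same formula (the Jacobian factor contributing a pluriharmonic term annihilated by $\partial\bar\partial$). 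It therefore suffices to prove that $\partial_{z_\mu}\partial_{\bar z_\nu}\log\phi_{M_j}\to\partial_{z_\mu}\partial_{\bar z_\nu}\log\phi_{\widetilde M}$ locally uniformly on $U$.

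The first step is to upgrade the convergence of the diagonal values --- $\phi_{M_j}\to\phi_{\widetilde M}$ locally uniformly on $U$, which is Bergman stability up to the constant --- to $\kappa_{M_j}\to\kappa_{\widetilde M}$ locally uniformly on $U\times U$. The Cauchy--Schwarz inequality applied to the series defining $\kappa_X$ gives $|\kappa_X(z,w)|^2\le\phi_X(z)\,\phi_X(w)$, so the convergence of the diagonal values makes $\{\kappa_{M_j}\}$ locally uniformly bounded on $U\times U$, hence a normal family by Montel's theorem. Any locally uniform subsequential limit $G$ is holomorphic in $(z,\bar w)$ on $U\times U$ and agrees with $\kappa_{\widetilde M}$ on the real $2n$-dimensional ``antidiagonal'' $\{w=z\}$; expanding $G-\kappa_{\widetilde M}$ in a power series in $(z,\bar w)$ about $p$ and invoking the linear independence of the monomials $z^\alpha\bar z^\beta$ forces $G\equiv\kappa_{\widetilde M}$ near $p$, hence throughout the connected set $U\times U$. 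Thus every subsequential limit is $\kappa_{\widetilde M}$ and the whole sequence converges locally uniformly. (If ``Bergman stable'' is read as convergence of the full kernels rather than of their restrictions to the diagonal, this step is simply the hypothesis.)

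Since the $\kappa_{M_j}$ are holomorphic in $(z,\bar w)$ and converge locally uniformly to $\kappa_{\widetilde M}$ on $U\times U$, the Cauchy estimates give locally uniform convergence of all their partial derivatives, and restricting to the diagonal $w=z$ yields locally uniform convergence on $U$ of $\phi_{M_j}$, $\partial_{z_\mu}\phi_{M_j}$, $\partial_{\bar z_\nu}\phi_{M_j}$, and $\partial_{z_\mu}\partial_{\bar z_\nu}\phi_{M_j}$ to the corresponding quantities for $\widetilde M$. By hypothesis $\phi_{\widetilde M}>0$ on $U$, so on any compact $L\subset U$ it is bounded below by some $\delta>0$; then $\phi_{M_j}\ge\delta/2$ on $L$ for all large $j$, so the Bergman metric of $M_j$ is defined on $L$. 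From the elementary identity
\[
\partial_{z_\mu}\partial_{\bar z_\nu}\log\phi_X=\frac{\phi_X\,\partial_{z_\mu}\partial_{\bar z_\nu}\phi_X-\partial_{z_\mu}\phi_X\,\partial_{\bar z_\nu}\phi_X}{\phi_X^2},
\]
whose numerator converges locally uniformly on $U$ and whose denominator is bounded away from $0$ on $L$, we conclude that $\partial_{z_\mu}\partial_{\bar z_\nu}\log\phi_{M_j}\to\partial_{z_\mu}\partial_{\bar z_\nu}\log\phi_{\widetilde M}$ locally uniformly on $U$. Since $p$ was arbitrary, the Bergman metric of $M_j$ converges locally uniformly to that of $\widetilde M$, proving the proposition.

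The normal-families argument and the bookkeeping with Cauchy estimates are routine; the step that genuinely uses the complex structure --- and the one to get right --- is the passage from diagonal to full-kernel convergence, which rests on the fact that a holomorphic function of $(z,\bar w)$ is determined by its restriction to $\{w=z\}$. The positivity of $K_{\widetilde M}$ on the diagonal enters only at the end, to keep the denominator $\phi_{M_j}^2$ uniformly away from zero on compact sets.
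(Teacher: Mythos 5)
Your proof is correct, but it takes a genuinely different route from the paper's. The paper works in $L^2$: on the Dirichlet ball $B_j=B(z_0,\tau_j(z_0))$, which exhausts $\widetilde M$ by Lemma~\ref{lm:inj}, it uses the reproducing property of $K_{B_j}$ to get
\[
\bigl\|\widetilde{p}^*_j K_{M_j}(\cdot,w)-K_{B_j}(\cdot,w)\bigr\|^2_{B_j}\le K_{B_j}(w,w)-\widetilde{p}^*_jK_{M_j}(w,w),
\]
the analogous bound with $K_{\widetilde M}$ in place of $\widetilde{p}^*_jK_{M_j}$, and the triangle inequality to conclude
\[
\bigl\|K_{\widetilde M}(\cdot,w)-\widetilde{p}^*_j K_{M_j}(\cdot,w)\bigr\|^2_{B_j}\le 2\bigl(2K_{B_j}(w,w)-K_{\widetilde M}(w,w)-\widetilde{p}^*_jK_{M_j}(w,w)\bigr),
\]
whose right-hand side tends to $0$ locally uniformly by Ramadanov's theorem (for $K_{B_j}$) and the Bergman stability hypothesis (for $K_{M_j}$); Cauchy estimates then give convergence of all derivatives. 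You instead use the pointwise Cauchy--Schwarz bound $|\kappa_X(z,w)|^2\le\phi_X(z)\phi_X(w)$ to get local boundedness of the off-diagonal kernels from the diagonal hypothesis, then Montel plus the fact that a holomorphic function of $(z,\bar w)$ is determined by its restriction to $\{w=z\}$ to identify every subsequential limit with $\kappa_{\widetilde M}$; Cauchy estimates close the argument as before. Both proofs hinge on the same two facts at the end (Cauchy estimates to pass to derivatives, and $K_{\widetilde M}>0$ to control the denominator of $\partial\bar\partial\log K^*$), but you promote diagonal to off-diagonal convergence by a normal-families/polarization argument rather than an $L^2$ one, thereby avoiding Ramadanov's theorem and the geometry of the balls $B_j$ entirely. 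What the paper's $L^2$ route buys in exchange is an explicit quantitative estimate: inequality~\eqref{eq:km} is recycled as~\eqref{eq:eff6} in the proof of the effective estimate~Theorem~\ref{th:eff}, which a soft Montel argument cannot produce.
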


\begin{proof} Let $z_0\in \widetilde M$. Recall that $B_j=B(z_0, \tau_j(z_0))\subset D_j(z_0)$, the Dirichlet
fundamental domain of $M_j$ with center at $z_0$.  Denote by $\|\cdot \|_{B_j}$ the $L^2$-norm as defined
by \eqref{eq:l2} over $B_j$. Let $w\in B_j$. It follows from the reproducing property of the Bergman kernel
that
\begin{eqnarray*}
&&\left\|\widetilde{p}^*_j(K_{M_j})(\cdot, w)-K_{B_j}(\cdot, w)\right\|^2_{B_j}\\
&&=\left\|\widetilde{p}_j^*(K_{M_j})(\cdot, w)\right\|^2_{B_j}-2{\rm Re}\langle
\widetilde{p}_j^*(K_{M_j})(\cdot, w), K_{B_j}(\cdot, w)\rangle+\left\|K_{B_j}(\cdot, w)\right\|^2_{B_j}\\
&&=\left\|K_{M_j}(\cdot, w)\right\|^2_{\widetilde{p}_j(B_j)}-2\widetilde{p}_j^*(K_{M_j})(w, w)+K_{B_j}(w,
w)\\ &&\le \left\|K_{M_j}(\cdot, w)\right\|^2_{M_j}-2\widetilde{p}_j^*(K_{M_j})(w, w)+K_{B_j}(w, w)\\
&&=K_{B_j}(w, w)-\widetilde{p}_j^*(K_{M_j})(w, w).
\end{eqnarray*}
Similarly,
\[
\|K_{\widetilde M}(\cdot, w)-K_{B_j}(\cdot, w)\|^2_{B_j}\le K_{B_j}(w, w)-K_{\widetilde M}(w, w).
\]
Therefore,
\begin{equation}\label{eq:km}
\|K_{\widetilde M}(\cdot, w)-\widetilde{p}_j(K_{M_j})(\cdot, w)\|^2_{B_j}\le 2\big(2K_{B_j}(w,
w)-K_{\widetilde M}(w, w)-\widetilde{p}_j(K_{M_j})(w, w)\big).
\end{equation}
It follows from Ramadanov's theorem \cite{Ramadanov67} and the Bergman stability assumption that the right
hand side above converges locally uniformly to zero for $w$ near $z_0$. Since the Bergman metric is given
locally by
\[
\partial\bar\partial\log K^*=\frac{\partial\bar\partial K^*}{K^*}-\frac{\partial K^*\wedge\bar\partial
K^*}{(K^*)^2},
\]
we then conclude the proof of the proposition by applying the Cauchy estimate.
\end{proof}

\section{Effective estimates}\label{sec:eff}

Recall that the hyperbolic metric on the unit disc $\D$ is given by
$$
ds^2_{\rm hyp}=\frac{4\, |dz|^2}{(1-|z|^2)^2}
$$
and the hyperbolic distance between $z$ and $0$ is
$$
{\rm dist}_{\rm hyp}(0,z)=\log\frac{1+|z|}{1-|z|}.
$$
It follows that the Euclidean and hyperbolic balls $B_{\rm eucl}(0,r)$ and $B_{\rm hyp}(0,\tau)$  are
identical if and only if
$$
r=\frac{e^{\tau}-1}{e^{\tau}+1}.
$$
Furthermore,
$$
K_{B_{\rm hyp}(0,\tau)}(0)=\frac1\pi\frac{(e^{\tau}+1)^2}{(e^{\tau}-1)^2}dz\wedge d\bar{z}
$$
where $K_{B_{\rm hyp}(0,\tau)}$ denotes the Bergman kernel form on $B_{\rm hyp}(0,\tau)$.  Let $p\colon
\D\to M$ be a covering map on a Riemann surface $M$.  Then the hyperbolic metric $ds^2_{{\rm hyp}, M}$
satisfies
\[
p^*(ds^2_{{\rm hyp}, M})=ds^2_{{\rm hyp}}.
\]
Thus for a form $K$ on $M$, we have $|p^*(K)(z)|_{{\rm hyp}}=|K(p(z))|_{{\rm hyp}, M}$, where $|\cdot
|_{{\rm hyp}}$ denotes the pointwise norm with respect to the hyperbolic metric. (We will drop subscript
$M$ when doing so causes no confusion.)

We now prove Theorem~\ref{th:eff}. Let $\widetilde{p}_j\colon \D\to M_j=\D/\Gamma_j$ and $p_j\colon M_j\to
M=M_j/(\Gamma/\Gamma_j)$ be the natural projections. Let $\tau_j(w)$ be the injectivity radius at $w\in
M_j$ and let $\tau_j$ denote the injectivity radius of $M_j$. For any $z\in\D$, we have
$$
|K_{M_j}(\widetilde{p}_j(z))|_{\rm hyp}=|\widetilde{p}_j^*(K_{M_j})(z)|_{{\rm hyp}}\le  |K_{B_{\rm
hyp}(z,\tau_j(z))}(z)|_{\rm hyp}\le |K_{B_{\rm hyp}(0,\tau_j)}(0)|_{\rm hyp}.
$$
Hence
\begin{equation}\label{eq:eff1}
|K_{M_j}(\widetilde{p}_j(z))|_{\rm hyp}-\frac1{4\pi}\le \frac1{\pi}\frac{e^{\tau_j}}{(e^{\tau_j}-1)^2}.
\end{equation}
It follows from the Gauss-Bonnet theorem that
$$
4\pi(g_j-1)={\rm vol}_{\rm hyp}(M_j).
$$
Since ${\rm vol}_{\rm hyp}(M_j)=[\Gamma:\Gamma_j]{\rm vol}_{\rm hyp}(M)$, we obtain
$$
\frac{g_j}{[\Gamma:\Gamma_j]}=g-1+\frac1{[\Gamma:\Gamma_j]}.
$$
Note that $g_j=\int_{M_j}|K_{M_j}|_{\rm hyp}dV_{\rm hyp}$ and $|K_{M_j}|_{\rm hyp}$ is invariant under the
deck transformations of $p_j\colon M_j\rightarrow M$.  Thus
\begin{equation}\label{eq:eff2}
\int_{M} |K_{M_j}|_{\rm hyp}dV_{\rm hyp}=\int_M \frac1{4\pi}dV_{\rm hyp}+\frac1{[\Gamma:\Gamma_j]}\ge
\int_M \frac1{4\pi}\, dV_{\rm hyp}.
\end{equation}
Here we identify $K_{M_j}$ with its push-down to $M$.

It suffices to prove the theorem at $z=0$; the general case is reduced to this case by  applying a
M\"{o}bius transformation. Let $r_j=(e^{\tau_j}-1)/(e^{\tau_j}+1)$. Then the Euclidean disk $B_{\rm
eucl}(0, r_j)=B_{\rm hyp}(0, \tau_j)$, the hyperbolic disk, and it is contained in the fundamental domain
$D_j(0)$ of $M_j$. Write $\widetilde{p}_j^*(K_{M_j})(z)=K^{\ast}_{M_j}(z)dz\wedge d\bar{z}$ on $B_j=B_{\rm
eucl}(0, r_j)$. Let $\epsilon$ be a sufficiently small positive constant to be chosen. For $w\in B_j$, let
$f$ be a holomorphic 1-form on $M_j$ with unit $L^2-$norm (as defined by \eqref{eq:l2}) such that
$K_{M_j}(\widetilde p_j(w))=f_j(\widetilde p_j(w))\wedge \overline{f_j(\widetilde p_j(w))}$. Write
$\widetilde{p}_j^*(f)(z)=f^\ast(z)dz$. Then
$$
\int_{B_j}|f^\ast|^2 dV_{\rm eucl}=\frac12\left|\int_{B_j}\widetilde{p}_j^*(f)\wedge
\overline{\widetilde{p}_j^*(f)}\right|\le\frac{1}{2} \left|\int_{M_j}f\wedge \bar{f}\right|=1.
$$
Since
\begin{equation}\label{eq:cauchy}
(f^*(z))^2=\frac{r^2_j}{\pi}\int_{|\zeta|<r_j}\frac{(f^*(\zeta))^2}{(r^2_j-z\bar\zeta)^2} dV_{\rm eucl},
\quad z\in B_j,
\end{equation}
it follows that
$$
|(f^\ast(z))^2-(f^\ast(z'))^2|\le \frac{48}{\pi r^3_j} |z-z'|
$$
for all $z,z'\in \frac12 B_j$.  Now suppose $w\in \epsilon B_j$ where $\epsilon$ is a sufficiently small
number to be chosen. Then
\begin{equation}\label{eq:eff3}
|\widetilde{p}_j^*(K_{M_j})(w)|_{\rm hyp}=\frac{(1-|w|^2)^2}4|f^\ast(w)|^2\le
\frac14(|f^\ast(0)|^2+\frac{48\epsilon}{\pi r^2_j})\le |K_{M_j}(0)|_{\rm hyp}+\frac{12\epsilon}{\pi r^2_j}.
\end{equation}
From \eqref{eq:eff2} and then \eqref{eq:eff1}, we have
$$
\int_{\widetilde{p}_1(\epsilon B_j)}\left(\frac1{4\pi}-|K_{M_j}|_{\rm hyp}\right)dV_{\rm
hyp}\le\int_{M\backslash\widetilde{p}_1(\epsilon B_j)}\left(|K_{M_j}|_{\rm hyp}-\frac1{4\pi}\right)dV_{\rm
hyp}\le  \frac{4(g-1)e^{\tau_j}}{(e^{\tau_j}-1)^2}.
$$
Combining this with \eqref{eq:eff3}, we obtain
$$
\frac1{4\pi}-|K_{M_j}|_{\rm hyp}(0)- \frac{12\epsilon}{\pi r^2_j}\le
\frac{4(g-1)e^{\tau_j}}{(e^{\tau_j}-1)^2}\frac1{{\rm vol}_{\rm hyp}(\epsilon B_j)}.
$$
Since ${\rm vol}_{\rm hyp}(\epsilon B_j)=4\pi (\epsilon r_j)^2/(1-(\epsilon r_j)^2)\ge 4\pi \epsilon^2
r^2_j$,
\[
\frac1{4\pi}-|K_{M_j}|_{\rm hyp}(0)\le \frac{12\epsilon}{\pi r^2_j}+ \frac{(g-1)e^{\tau_j}}{\pi
(e^{\tau_j}-1)^2}\frac{1}{\epsilon^2 r_j^2}.
\]
Choosing $\epsilon=\left((g-1)e^{\tau_j}/6(e^{\tau_j}-1)^2\right)^{1/3}$, we then have
$$
\frac1{4\pi}-|K_{M_j}|_{\rm hyp}(0)\le \frac{18}{\pi r^2_j}\left(\frac{(g-1)
e^{\tau_j}}{6(e^{\tau_j}-1)^2}\right)^{1/3}.
$$
Note that the right hand side above is greater than that in \eqref{eq:eff1}. Since $\tau_j\ge \log 3$, we have
$$
e^{\tau_j}-1\ge 2,\ \ \ \ \ r_j=\frac{e^{\tau_j}-1}{e^{\tau_j}+1}\ge \frac12.
$$
Hence
\[
\left| |K_{M_j}(0)|_{\rm hyp}-\frac1{4\pi}\right|\le \frac{C}{\pi}(g-1)^{1/3}e^{-\tau_j/3}
\]
where
$$
C=18\cdot 4 \cdot (6\cdot 4)^{-1/3}=12\cdot 3^{2/3}.
$$
This concludes the proof of Theorem~\ref{th:eff} for the Bergman kernel.

We now show how to obtain effective estimates for the Bergman metric, without keeping track of the
numerical constants.  Let $K^*_{M_j}(z,w)$ denote the function on $\D$ representing the pull-back of the
Bergman kernel form on $M_j$.   Let $K^*_\D$ and $K^*_{B_j}$ be the Bergman kernel functions of $\D$ and
$B_j=B_{\rm eucl}(0, r_j)$ respectively. Assume that $\tau_j\ge \log 3$. Then $r_j\ge 1/2$.  From the first
part of the theorem, we know that for $w\in \frac12\D$,
\begin{equation}\label{eq:eff4}
|K^*_{M_j}(w, w)-K^*_\D(w, w)|\le C(g-1)^{1/3}e^{-\tau_j/3}.
\end{equation}
Furthermore, a simple calculation yields that
\begin{equation}\label{eq:eff5}
|K^*_{B_j}(w, w)-K^*_\D(w, w)|\le C e^{-\tau_j}.
\end{equation}
Following the same lines of argument as in the proof of \eqref{eq:km}, we have
\begin{equation}\label{eq:eff6}
\int_{B_j}|K_{M_j}^\ast(z,w)-K^\ast_{\D}(z,w)|^2 dV_{\rm eucl}(z)\le 2\big(2K^*_{B_j}(w, w)-K^*_{\D}(w,
w)-K^*_{M_j}(w, w)\big).
\end{equation}
Combining \eqref{eq:eff4}-\eqref{eq:eff6}, we then obtain
$$
\int_{\frac12 \D}|K_{M_j}^\ast(z,w)-K^\ast_{\D}(z,w)|^2 dV_{\rm eucl}(z)\le C(g-1)^{1/3}e^{-\tau_j/3}.
$$
Using the reproducing property of the Bergman kernel on $\frac12\D$ as in \eqref{eq:cauchy} and applying
the Cauchy-Schwarz inequality, we have for any integers $\alpha, \beta\ge 0$,
\[
\left|\left(\frac{\partial^{\alpha+\beta} K^*_{M_j}}{\partial z^\alpha\partial\bar
z^\beta}-\frac{\partial^{\alpha+\beta} K^*_{\D}}{\partial z^\alpha\partial\bar z^\beta}\right)(0,
0)\right|\le C (g-1)^{1/3}e^{-\tau_j/3}.
\]
The above estimates then enable us to obtain an effective estimate for the Bergman metric. We leave the
detail to the interested reader.

\section{Compact complex manifolds}\label{sec:compact}

There have been extensive studies on the theory of $L^2$-Betti numbers (cf. \cite{CheegerGromov85a,
CheegerGromov85b, Luck94, Yeung94}). In this section, we establish a link between this theory
and Bergman stability on a tower of coverings on a compact complex manifold.

\begin{proposition}\label{prop:link} A tower of coverings $M_j$ on a compact complex manifold $M$
 is Bergman stable if and only if Kazhdan's inequality \eqref{eq:kaz} becomes an equality:
\begin{equation}\label{eq:kaz-eq}
\lim_{j\to\infty}\frac{h^{n, 0}(M_j)}{[\Gamma:\Gamma_j]}= \int_{M} K_{\widetilde M}.
\end{equation}
\end{proposition}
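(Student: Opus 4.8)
The plan is to read off both implications from three ingredients already in hand: the identity $\int_M K_{M_j}=h^{n,0}(M_j)/[\Gamma:\Gamma_j]$ and the uniform upper bound on $K_{M_j}$ over the compact base $M$, both from the proof of Proposition~\ref{prop:kaz}, together with the one-sided pointwise bound $\limsup_{j\to\infty}\widetilde p_j^*(K_{M_j})(z)\le K_{\widetilde M}(z)$ of Proposition~\ref{prop:upper}. Fix a smooth positive $(n,n)$-form $dV$ on $M$ and use the same letter for its $\Gamma$-invariant pull-back to $\widetilde M$, and write $\widetilde p_j^*(K_{M_j})=u_j\,dV$, $K_{\widetilde M}=u_\infty\,dV$; since diagonal Bergman kernels are invariant under biholomorphisms and $\Gamma_j$ is normal in $\Gamma$, the nonnegative functions $u_j$ and $u_\infty$ are $\Gamma$-invariant and descend to $M$, with $u_\infty$ continuous. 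In this language ``Bergman stable'' means $u_j\to u_\infty$ locally uniformly on $\widetilde M$. For the implication ``stable $\Rightarrow$ \eqref{eq:kaz-eq}'' there is essentially nothing to do: local uniform convergence descends to uniform convergence on the compact manifold $M$, so $h^{n,0}(M_j)/[\Gamma:\Gamma_j]=\int_M K_{M_j}\to\int_M K_{\widetilde M}$.

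For the converse, assume \eqref{eq:kaz-eq}, so that $\int_M u_j\,dV=h^{n,0}(M_j)/[\Gamma:\Gamma_j]\to\int_M u_\infty\,dV$. I would first upgrade this to $L^1$ convergence. Writing $|u_j-u_\infty|=(u_\infty-u_j)+2(u_j-u_\infty)_+$ and integrating over the finite-measure space $M$, the integral of the first term tends to $0$ by the convergence of the masses, while $(u_j-u_\infty)_+\to0$ pointwise (because $\limsup_ju_j\le u_\infty$) and $0\le(u_j-u_\infty)_+\le u_j\le C$, so the integral of the second term tends to $0$ by dominated convergence. Hence $u_j\to u_\infty$ in $L^1(M)$, equivalently in $L^1_{\mathrm{loc}}(\widetilde M)$.

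To promote $L^1_{\mathrm{loc}}$ convergence to local uniform convergence I would establish equicontinuity of $\{u_j\}$ on compact subsets of $\widetilde M$. Fix $z_0$ and $R>0$; by Lemma~\ref{lm:inj} one has $B(z_0,R)\subset D_j(z_0)$ for $j$ large, so there $\widetilde p_j$ is a biholomorphism onto an open subset of $M_j$ and, in a fixed holomorphic chart, $\widetilde p_j^*(K_{M_j})$ represents the reproducing kernel of a Hilbert space of holomorphic $(n,0)$-forms. The Cauchy--Schwarz inequality for reproducing kernels together with the uniform bound $u_j\le C$ then makes the coordinate kernels uniformly bounded on $B(z_0,R)\times B(z_0,R)$, holomorphic in the first and antiholomorphic in the second variable; the Cauchy estimates give equicontinuity of this family, hence of its diagonal $\{u_j\}$, on $B(z_0,R/2)$. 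Since an equicontinuous sequence converging in $L^1_{\mathrm{loc}}$ to a continuous function converges to it locally uniformly, we conclude $u_j\to u_\infty$ locally uniformly on $\widetilde M$, i.e. the tower is Bergman stable. (The convergence of the off-diagonal kernel, should one want it, then follows exactly as in the proof of Proposition~\ref{prop:km}: diagonal convergence forces the right-hand side of the analogue of \eqref{eq:km} to $0$, and Ramadanov's theorem plus the Cauchy estimates finish the argument.)

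The step I expect to be the main obstacle is precisely this last promotion from $L^1$ to local uniform convergence. Proposition~\ref{prop:upper} supplies only a one-sided bound and does not by itself force convergence of $u_j$ at any point, so the equicontinuity argument --- and with it the essential use of the uniform upper bound on $K_{M_j}$ near each point, which is where compactness of $M$ is really used --- is what bridges the gap between the numerical equality \eqref{eq:kaz-eq} and genuine stability of the kernels.
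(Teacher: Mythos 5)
Your proposal is correct and rests on exactly the same three ingredients the paper uses: the one-sided pointwise bound from Proposition~\ref{prop:upper}, the uniform boundedness of $K_{M_j}$ on the compact base (from the proof of Proposition~\ref{prop:kaz}), and local equicontinuity of the pulled-back kernels, which you re-derive but which the paper states separately as Lemma~\ref{lm:equicts}. The only real difference is in how the sufficiency direction is organized. The paper argues by contradiction: if $\liminf_j \widetilde p_j^*(K_{M_j})(z_0) < K_{\widetilde M}(z_0)$ for some $z_0$, equicontinuity spreads the deficit to a neighborhood $U$, and integrating over $M$ (using $\limsup \le K_{\widetilde M}$ plus the uniform bound to control $M\setminus U$) produces a strict defect $\tfrac{1}{2}\eps\,\mathrm{vol}(U)$ in the limit of $\int_M K_{M_j}$, contradicting \eqref{eq:kaz-eq}. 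You instead argue directly: the identity $|u_j-u_\infty| = (u_\infty-u_j) + 2(u_j-u_\infty)_+$, the convergence of masses, $\limsup u_j\le u_\infty$, and the uniform bound give $u_j\to u_\infty$ in $L^1(M)$ via dominated convergence; then equicontinuity plus Arzel\`a--Ascoli upgrades $L^1_{\mathrm{loc}}$ convergence to a continuous limit to local uniform convergence. The two routes are logically equivalent uses of the same facts, but your direct version is arguably cleaner in that it cleanly separates the measure-theoretic step ($L^1$ convergence) from the function-theoretic step (normal-family compactness), and it avoids extracting a subsequence by contradiction. Your closing remark correctly identifies that the uniform upper bound on $K_{M_j}$, hence compactness of $M$, is where the whole argument hinges.
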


This proposition is a consequence of Proposition~\ref{prop:upper} and the following lemma.

\begin{lemma}\label{lm:equicts} Let $M_j$ be a tower of coverings on a complex manifold. Then the
pull-backs $\widetilde{p}^*_j(K_{M_j})$ of the Bergman kernels  is locally equicontinuous on
$\widetilde{M}$.
\end{lemma}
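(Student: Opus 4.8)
The plan is to deduce local equicontinuity — in fact local uniform Lipschitz continuity — from standard interior estimates for holomorphic functions, by combining the extremal characterization \eqref{eq:extreme} of the diagonal Bergman kernel with the inclusion $\Gamma_j\subseteq\Gamma$, which is precisely what makes the relevant bounds uniform in $j$.

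First I would fix $z_0\in\widetilde M$ and a holomorphic coordinate chart $V$ around $z_0$ contained in the Dirichlet fundamental domain $D(z_0)$ of $\Gamma=\Gamma_1$ (see \eqref{eq:dirichlet}). Since no two points of $D(z_0)$ are equivalent under $\Gamma$, and $\Gamma_j\subseteq\Gamma$, no two points of $V$ are equivalent under $\Gamma_j$ either; hence $\widetilde p_j|_V$ is a biholomorphism onto the open set $\widetilde p_j(V)\subseteq M_j$ \emph{for every} $j$. Fix also relatively compact neighborhoods $V''\Subset V'\Subset V$ of $z_0$, and write $\widetilde p_j^*(K_{M_j})=K^*_{M_j}\,\omega_n$ on $V$; it is enough to prove that $\{K^*_{M_j}\}_j$ is uniformly Lipschitz on $V''$. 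Given $j$ and any $f\in A^2_{(n,0)}(M_j)$ with $\|f\|=1$, write $\widetilde p_j^*f=g\,dz_1\wedge\cdots\wedge dz_n$ on $V$, with $g$ holomorphic. Restricting the integral defining $\|f\|^2$ to $\widetilde p_j(V)$ and pulling it back via the biholomorphism $\widetilde p_j|_V$ (the integrand in \eqref{eq:l2} being a nonnegative multiple of the volume form), we obtain $\int_V|g|^2\,dV_{\rm eucl}\lesssim\|f\|^2_{M_j}=1$, the implied constant depending only on $n$. Then by the sub-mean-value inequality for $|g|^2$ and Cauchy's estimates for $\partial g$, there follow bounds, with constants depending only on $n,V'',V',V$ and in particular independent of $j$ and of $f$,
\[
\sup_{V'}|g|\ \lesssim\ \|g\|_{L^2(V)},\qquad |g(z)-g(z')|\ \lesssim\ |z-z'|\,\|g\|_{L^2(V')}\quad(z,z'\in V''),
\]
hence $\big||g(z)|^2-|g(z')|^2\big|=|g(z)-g(z')|\cdot|g(z)+g(z')|\lesssim|z-z'|$ for $z,z'\in V''$, uniformly in $j$ and $f$.

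To finish, recall from \eqref{eq:extreme} that, in this chart, $K^*_{M_j}(z)=c_n\sup\{\,|g(z)|^2 : f\in A^2_{(n,0)}(M_j),\ \|f\|=1\,\}$, where $g$ is associated to $f$ as above and $c_n>0$ depends only on $n$ (the supremum being $0$ if $A^2_{(n,0)}(M_j)=\{0\}$). The elementary inequality $\big|\sup_f a_f-\sup_f b_f\big|\le\sup_f|a_f-b_f|$ then transfers the uniform Lipschitz bound for the individual functions $|g|^2$ to their supremum, giving $\big|K^*_{M_j}(z)-K^*_{M_j}(z')\big|\lesssim|z-z'|$ on $V''$ with a constant independent of $j$. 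Since $z_0\in\widetilde M$ was arbitrary, $\{\widetilde p_j^*(K_{M_j})\}_j$ is locally uniformly Lipschitz, hence locally equicontinuous, on $\widetilde M$.

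There is no deep obstacle in this argument; the only point requiring attention is the uniformity in $j$. One needs $\widetilde p_j|_V$ to be injective for all $j$ at once, so that the $L^2$-bound $\int_V|g|^2\lesssim1$ — and therefore the interior estimates derived from it — hold with a $j$-independent constant; this is exactly where the nesting $\Gamma_j\subseteq\Gamma$ of the tower is used.
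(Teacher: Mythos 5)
Your proof is correct, and it takes a genuinely different route from the paper's. The paper works with the \emph{off-diagonal} kernel $K^*_{M_j}(z,w)$: using the reproducing property it bounds
$\|\widetilde p_j^*(K_{M_j})(\cdot,w)\|^2_U \le K_{M_j}(w,w)\le K_{B_j}(w,w)\le K_{B_1}(w,w)$, with $B_j=B(z_0,\tau_j(z_0))$ and $U\Subset B_1$, so that $\int_U\int_U|K^*_{M_j}(z,w)|^2\,dV\,dV$ is bounded uniformly in $j$; equicontinuity of the diagonal then follows from Cauchy estimates applied to the function $(z,\bar w)\mapsto K^*_{M_j}(z,w)$, which is holomorphic on $U\times U$. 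You instead stay on the \emph{diagonal} and use the extremal characterization \eqref{eq:extreme}, pulling back normalized candidate forms to a fixed chart $V\subset D(z_0)$ and noting that injectivity of $\widetilde p_j|_V$ for all $j$ (from $\Gamma_j\subset\Gamma$) makes the $L^2$ bound $\int_V|g|^2\le\|f\|^2=1$ uniform in $j$; interior Cauchy estimates then give uniform Lipschitz bounds on $|g|^2$, transferred to the supremum $K^*_{M_j}$ by the elementary sup inequality. The underlying mechanism -- a $j$-independent injectivity radius together with interior elliptic estimates -- is the same, but your version avoids both the off-diagonal kernel and Ramadanov's theorem, making it a little more self-contained; the paper's version packages the argument so that the off-diagonal $L^2$ bound obtained as a by-product is also available in the proof of Proposition~\ref{prop:km}. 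One small wording issue: you bound $|g(z)-g(z')|\lesssim|z-z'|\,\|g\|_{L^2(V')}$, but what the Cauchy estimate directly gives is $|g(z)-g(z')|\lesssim|z-z'|\sup_{V'}|g|$, after which $\sup_{V'}|g|\lesssim\|g\|_{L^2(V)}\le 1$; the conclusion is unaffected, but the intermediate inequality as stated mixes the two steps.
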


\begin{proof}  Let $z_0\in \widetilde M$. Let $U\subset\subset B_j=B(z_0, \tau_j(z_0))$ be a neighborhood of
$z_0$ contained in a local coordinate chart. Let $K^*_{M_j}(z, w)$ be Bergman kernel function, representing
of the pull-backs to $\widetilde M$ of the Bergman kernel form $K_{M_j}(z, w)$ on $M_j$. Since
\[
\left\|\widetilde{p}^*_j(K_{M_j})(\cdot, w)\right\|^2_{U}=\left\|K_{M_j}(\cdot,
w)\right\|^2_{\widetilde{p}_j(U)}\le \left\|K_{M_j}(\cdot, w)\right\|^2_{M_j}=\widetilde{p}_j^*(K_{M_j})(w,
w)\le K_{B_j}(w, w)
\]
and $K_{B_j}(w, w)$ converges uniformly on $U$ to $K_{\widetilde M}(w, w)$, the above expressions are
uniformly bounded on $U$.  As a consequence,
\[
\int_{U}\int_{U} |K^*_{M_j}(z, w)|^2\, dV(z)\, dV(w)\le C<\infty.
\]
The equicontinuity of $\widetilde{p}^*_j(K_{M_j})$ near $z_0$ then follows from the Cauchy
estimate.\end{proof}

We now prove Proposition~\ref{prop:link}. The necessity is trivial, following from the uniform convergence
theorem as in the proof of Proposition~\ref{prop:kaz}. To prove the sufficiency, we note that from
Proposition~\ref{prop:upper} and \eqref{eq:kaz-eq}, we have
\[
\limsup_{j\rightarrow \infty} \widetilde{p}_j^*(K_{M_j})(z)= K_{\widetilde{M}}(z).
\]
Thus it suffices to show $\liminf_{j\rightarrow \infty} \widetilde{p}_j^*(K_{M_j})(z)\ge K_{\widetilde{M}}(z)$.
Proving by contradiction, we assume that there exist $z_0\in\widetilde M$ and $\epsilon>0$ such
\[
K^*_{M_{j_k}}(z_0)<K^*_{\widetilde M}(z_0)-\epsilon
\]
for a subsequence $j_k\to\infty$. As before, $K^*$ denotes the function representing the (pull-backs) of
the Bergman kernel forms on a local coordinate chart $U$ near $z_0$. By Lemma~\ref{lm:equicts}, after
possible shrinking of $U$, we have
\[
\widetilde{p}^*_{j_k} (K_{M_j})(z)<K_{\widetilde M}(z)-\frac12\epsilon
\]
for $z\in U$.  It then follows that
\begin{align*}
\limsup_{j_k\to\infty}\frac{h^{n, 0}(M_{j_k})}{[\Gamma:\Gamma_{j_k}]}&=\limsup_{j_k\to\infty} \int_M
K_{M_{j_k}}\le \limsup_{j_k\to\infty}\int_{M\setminus U} K_{M_{j_k}} +\int_U K_{\widetilde
M}-\frac12\epsilon {\rm vol}(U)\\ &\le \int_{M} K_{\widetilde M}-\frac12\epsilon {\rm vol}(U),
\end{align*}
contradicting \eqref{eq:kaz-eq}. We thus conclude the proof of Proposition~\ref{prop:link}.

We now recall relevant facts about the $L^2$-Betti numbers. (We refer the reader to \cite{Atiyah76},
\cite{CheegerGromov85a, CheegerGromov85b, CheegerGromov86}, and \cite[Section~8]{Gromov93} for extensive
discussions on related topics.) Let $\widetilde M$ be a universal covering and let $M_j=\widetilde{M}/\Gamma_j$
be a tower of coverings on a complete Riemannian manifold $M$. Let ${\mathcal H}^s_{(2)}(\widetilde{M})$ be
the space of $L^2$-harmonic $s$-forms on $\widetilde{M}$ corresponding to the $d$-Laplacian $\Delta$. Let
$K^s_{\widetilde{M}}$ be the Schwartz kernel of ${\mathcal H}^s_{(2)}(\widetilde{M})$. The {\it $L^2$-Betti
number} of $M$ is then given by
$$
b^s_{(2)}(M):=\int_M |K^s_{\widetilde{M}}|dV.
$$
When $M$ has bounded geometry and finite volume, Cheeger and Gromov showed that
\begin{equation}\label{eq:b1}
\lim_{j\to\infty}\frac{b^s(M_j)}{[\Gamma:\Gamma_j]} = b^s_{(2)}(M),
\end{equation}
where $b^s(M_j)$ is the ordinary $s$-th Betti number of $M_j$ (\cite{CheegerGromov85a, CheegerGromov85b}).
Similar result was obtained by Yeung \cite{Yeung94} on compact K\"{a}hler manifolds with negative sectional
curvatures. An analogous result was established for a finite connected $CW$-complex by
L\"{u}ck~\cite{Luck94}.

When $M$ is a compact K\"{a}hler manifold, the $L^2$-Hodge number $h^{p,q}_{(2)}(M)$ of $M$ is similarly
given by
\[
h^{p, q}_{(2)}(M):=\int_M |K^{p, q}_{\widetilde{M}}|dV.
\]
where  $K^{p, q}_{\widetilde M}$ is the Schwartz kernel for ${\mathcal H}^{p,q}_{(2)}(\widetilde{M})$, the
space of $L^2-$harmonic $(p,q)-$forms corresponding to the $\bar{\partial}-$Laplacian $\Box$.

\begin{proposition}\label{prop:link2}  A tower of coverings $M_j$ on a compact K\"{a}hler manifold
is Bergman stable if \eqref{eq:b1} holds.
\end{proposition}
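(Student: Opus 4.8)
The plan is to isolate, out of the full Betti-number hypothesis \eqref{eq:b1}, the single limit about holomorphic top-forms that Proposition~\ref{prop:link} asks for. First note that an $(n,0)$-form is annihilated by $\dbarstar$ for degree reasons, so $\mathcal H^{n,0}_{(2)}(\widetilde M)=A^2_{(n,0)}(\widetilde M)$; comparing the normalization \eqref{eq:l2}--\eqref{eq:berg} with the definition $h^{n,0}_{(2)}(M)=\int_M|K^{n,0}_{\widetilde M}|\,dV$ of the $L^2$-Hodge number then identifies $\int_M K_{\widetilde M}$ with $h^{n,0}_{(2)}(M)$. By Proposition~\ref{prop:link} it therefore suffices to show
\[
\lim_{j\to\infty}\frac{h^{n,0}(M_j)}{[\Gamma:\Gamma_j]}=h^{n,0}_{(2)}(M).
\]

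The next step is a Kazhdan-type upper bound in every bidegree on the antidiagonal: for each $(p,q)$ with $p+q=n$,
\[
\limsup_{j\to\infty}\frac{h^{p,q}(M_j)}{[\Gamma:\Gamma_j]}\le h^{p,q}_{(2)}(M).
\]
For $(p,q)=(n,0)$ this is Proposition~\ref{prop:kaz}, and for $(p,q)=(0,n)$ it follows by complex conjugation. For the remaining ``middle'' bidegrees the elementary extremal/domain-monotonicity argument behind Proposition~\ref{prop:kaz} is unavailable, since $\dbar$-harmonic $(p,q)$-forms with $q\ge1$ are not hereditary under passage to subdomains; instead I would run the heat-kernel argument of Cheeger--Gromov. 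For each fixed $t>0$ one has $h^{p,q}(M_j)\le\operatorname{Tr}e^{-t\Box_{M_j}}$, and the method of images together with the Gaussian decay of the heat kernel on $\widetilde M$ (which has bounded geometry, being the universal cover of the compact $M$; cf.\ \cite{CGT82,Donnelly96}) gives $[\Gamma:\Gamma_j]^{-1}\operatorname{Tr}e^{-t\Box_{M_j}}\to\int_M\operatorname{tr}e^{-t\Box_{\widetilde M}}(x,x)\,dV$ as $j\to\infty$; letting $t\to\infty$ and using monotone convergence (the integrand decreasing pointwise to $|K^{p,q}_{\widetilde M}|$), the right-hand side decreases to $h^{p,q}_{(2)}(M)$.

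Now I would combine these bounds with Hodge theory. The Hodge decomposition on the compact K\"ahler manifold $M_j$ gives $\sum_{p+q=n}h^{p,q}(M_j)=b^n(M_j)$, while on the complete K\"ahler manifold $\widetilde M$ the $d$-Laplacian equals twice the $\dbar$-Laplacian and hence preserves bidegree, so $\mathcal H^n_{(2)}(\widetilde M)=\bigoplus_{p+q=n}\mathcal H^{p,q}_{(2)}(\widetilde M)$ orthogonally and therefore $b^n_{(2)}(M)=\sum_{p+q=n}h^{p,q}_{(2)}(M)$. Writing $a_j^{p,q}=h^{p,q}(M_j)/[\Gamma:\Gamma_j]\ge0$ and $c^{p,q}=h^{p,q}_{(2)}(M)$, hypothesis \eqref{eq:b1} in degree $n$ reads $\sum_{p+q=n}a_j^{p,q}\to\sum_{p+q=n}c^{p,q}$, and the previous step gives $\limsup_j a_j^{p,q}\le c^{p,q}$ for each $(p,q)$. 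Hence, by elementary properties of $\liminf$ and $\limsup$,
\[
\liminf_{j\to\infty}a_j^{n,0}\ \ge\ \lim_{j\to\infty}\sum_{p+q=n}a_j^{p,q}-\sum_{p+q=n,\ (p,q)\ne(n,0)}\limsup_{j\to\infty}a_j^{p,q}\ \ge\ \sum_{p+q=n}c^{p,q}-\sum_{p+q=n,\ (p,q)\ne(n,0)}c^{p,q}=c^{n,0},
\]
which together with $\limsup_j a_j^{n,0}\le c^{n,0}$ forces $a_j^{n,0}\to c^{n,0}=\int_M K_{\widetilde M}$. Proposition~\ref{prop:link} then yields Bergman stability.

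I expect the main obstacle to be the Kazhdan-type upper bound for the middle bidegrees: it is not deep, being essentially Cheeger--Gromov's heat-kernel estimate, but it must be handled by the heat-kernel comparison on the tower rather than by the clean extremal argument that serves for holomorphic top-forms, because $\dbar$-harmonic $(p,q)$-forms with $q\ge1$ have no analogue of the domain-monotonicity of the Bergman kernel.
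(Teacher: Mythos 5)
Your proof matches the paper's argument step by step: the Hodge--Kodaira decomposition of the degree-$n$ Betti number into Hodge numbers on both $M_j$ and $\widetilde M$, Kazhdan's inequality $\limsup_j h^{p,q}(M_j)/[\Gamma:\Gamma_j]\le h^{p,q}_{(2)}(M)$ in every bidegree with $p+q=n$, the squeeze forcing $h^{n,0}(M_j)/[\Gamma:\Gamma_j]\to h^{n,0}_{(2)}(M)$, and then Proposition~\ref{prop:link}. The only distinction is that the paper simply cites \cite{Kazhdan83} and \cite{Gromov93} for Kazhdan's inequality in all bidegrees, whereas you correctly observe that the extremal/domain-monotonicity argument of Proposition~\ref{prop:kaz} is specific to $(n,0)$ and sketch the Cheeger--Gromov heat-kernel route for the intermediate bidegrees -- a useful elaboration of the cited reference, but the same line of reasoning.
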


\begin{proof} The Hodge-Kodaira decomposition
$$
{\mathcal H}^s_{(2)}(\widetilde{M})=\bigoplus_{p+q=s}{\mathcal H}^{p,q}_{(2)}(\widetilde{M})
$$
implies that
\begin{equation}\label{eq:b2}
b^s_{(2)}(M)=\sum_{p+q=s}h^{p,q}_{(2)}(M).
\end{equation}
By Kazhdan's inequality,
\begin{equation}\label{eq:b3}
\lim\sup_{j\rightarrow \infty} \frac{h^{p,q}(M_j)}{[\Gamma:\Gamma_j]}\le h^{p,q}_{(2)}(M)
\end{equation}
(\cite[Theorem 1 and its proof]{Kazhdan83}, \cite[pp.~153]{Gromov93};  see also Proposition~\ref{prop:kaz}
above), where $h^{p,q}(M_j)$ denotes the ordinary Hodge numbers of $M_j$. Since $b^s(M_j)=\sum_{p+q=s}
h^{p,q}(M_j)$, it follows from \eqref{eq:b1}--\eqref{eq:b3} that
$$
\lim_{j\rightarrow \infty} \frac{h^{p,q}(M_j)}{[\Gamma:\Gamma_j]}= h^{p,q}_{(2)}(M).
$$
In particular, we have \eqref{eq:kaz-eq} and thus the Bergman stability by Proposition~\ref{prop:link}.
\end{proof}

\section{Hyperbolic Riemann surfaces}\label{sec:hyp}

Let $\Gamma$ be a Fuchsian group, i.e., a properly discontinuous subgroup of $SL(2,{\mathbb R})$. Recall
that $\Gamma$ is of {\it convergence type} if $ \sum_{\gamma\in \Gamma}(1-|\gamma (0)|)<\infty. $ We refer the reader to \cite[Chapter~XI]{Tsuji59} for a treatment of the subject. Let $\widetilde{p}$ be the natural projection from $\D$ onto $\D/\Gamma$ (we will also use $\widetilde{p}$ to denote the natural projection from $\D\times \D$ onto $(\D/\Gamma)\times (\D/\Gamma)$). A classical result of Myrberg states that $\Gamma$ is of convergence type if and only if $\D/\Gamma$ is a hyperbolic Riemann surface, and in this case,
\begin{equation}\label{eq:myrberg}
g_{\D/\Gamma}(\widetilde{p}(z),\widetilde{p}(w))=\sum_{\gamma\in \Gamma} g_\Delta(z,\gamma (w))=-\sum_{\gamma\in
\Gamma}\log\left|\frac{z-\gamma(w)}{1-\overline{\gamma(w)}z}\right|
\end{equation}
(see \cite[Theorem~XI.~13]{Tsuji59}).
Note that since $g_\D(z,0)=g_\D(\gamma(z),\gamma(0))$ for any $\gamma\in \Gamma$,
$$
1-|z|^2=\frac{(1-|\gamma(0)|^2)(1-|\gamma(z)|^2)}{|1-\overline{\gamma(0)}\gamma(z)|^2}\le
\frac{4(1-|\gamma(0)|)(1-|\gamma(z))|}{\max\{(1-|\gamma(z)|)^2,(1-|\gamma(0)|)^2\}}.
$$
Therefore,
$$
\frac{(1-|z|^2)(1-|\gamma(0)|)}4\le 1-|\gamma(z)|\le \frac{4(1-|\gamma(0)|)}{1-|z|^2},
$$
from which it follows that when $\Gamma$ is of convergent type, $\sum_{\gamma\in
\Gamma}(1-|\gamma(z)|)<\infty$  all $z\in \D$.

Using
\[
1-\left|\frac{z-\gamma(w)}{1-\overline{\gamma(w)}z}\right|^2=\frac{(1-|z|^2)(1-|\gamma(w)|^2)}
{|1-\overline{\gamma(w)}z|^2}\le 2\frac{1+|z|}{1-|z|} (1-|\gamma(w)|),
\]
and the simple inequality $-\log x\le 2(1-x)$ when $x\ge 1/2$, we then have

\[
-\frac12\log\left|\frac{z-\gamma(w)}{1-\overline{\gamma(w)}z}\right|^2\le
1-\left|\frac{z-\gamma(w)}{1-\overline{\gamma(w)}z}\right|^2\le 2\frac{1+|z|}{1-|z|} (1-|\gamma(w)|),
\]
if $1-|\gamma(w)|\le (1-|z|)/4(1+|z|)$.  Therefore, the series on the right hand side of \eqref{eq:myrberg}
converges local uniformly in $z$ and likewise in $w$.

We now establish a transformation formula of Bergman kernel for a normal covering map between
hyperbolic Riemann surfaces. A related formula for Reinhardt domains in $\C^n$ was obtain in \cite{Fu01}.

\begin{proposition}\label{prop:trans} Let $M$ and $\widetilde M$ be hyperbolic Riemann surfaces.  Let $\widetilde p\colon \widetilde M\to M$ be a normal covering map.  Then
\begin{equation}\label{eq:trans}
\big(\widetilde{p}^*K_{M}\big)(z, w)=\sum_{\gamma\in\Gamma} \big(\gamma^*K^z_{\widetilde M}\big)(w).
\end{equation}
\end{proposition}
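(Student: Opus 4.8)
The plan is to derive the transformation formula (\ref{eq:trans}) by first establishing that the right-hand side defines an element of the relevant $L^2$-space, and then checking that it has the reproducing property characterizing $\widetilde{p}^*K_M$. I would work on the universal cover $\D$ of $\widetilde M$; since $\widetilde M$ is hyperbolic, Myrberg's formula (\ref{eq:myrberg}) applies and, as in the discussion preceding the proposition, the relevant series converge locally uniformly. Write $\Gamma$ for the deck group of $\widetilde p\colon \widetilde M\to M$, so $M=\widetilde M/\Gamma$, and for fixed $z\in\widetilde M$ consider the series $S_z(w):=\sum_{\gamma\in\Gamma}\bigl(\gamma^*K^z_{\widetilde M}\bigr)(w)$. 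Each summand is a holomorphic $(1,0)$-form in $w$ on $\widetilde M$, and the series is formally $\Gamma$-invariant in $w$ (reindexing $\gamma\mapsto\gamma_0\gamma$ for $\gamma_0\in\Gamma$), so once convergence is established $S_z$ descends to a holomorphic form on $M$ — up to the usual care that it is really the $z$-variable that must also be pushed down, which is handled by noting invariance of the whole expression $(\widetilde p^*K_M)(z,w)$ under $\Gamma$ in $z$ as well.

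The first substantive step is convergence and $L^2$-control. I would estimate $|K^z_{\widetilde M}(w)|$ using the extremal/decreasing properties of the Bergman kernel (\ref{eq:extreme}) together with monotonicity: on a hyperbolic Riemann surface the Bergman kernel is dominated by that of a hyperbolic disc of fixed radius around each point, which gives a Green-function–type decay matching the estimates already carried out in the excerpt for the Myrberg series. Summing $|(\gamma^*K^z_{\widetilde M})(w)|$ over $\gamma\in\Gamma$ and comparing term-by-term with $\sum_\gamma(1-|\gamma(w)|)$ (shown to converge locally uniformly when $\Gamma$ is of convergence type, which holds since $M$ is hyperbolic — here one must check that $\Gamma$, as a subgroup acting on $\D$, inherits convergence type, which follows because $M=\D/\Gamma$ is hyperbolic by Myrberg's theorem) yields absolute, locally uniform convergence. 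A Fubini/Tonelli argument over a fundamental domain $D$ for $\Gamma$ in $\widetilde M$ then gives $\int_M|S_z|^2 = \int_{\widetilde M}|K^z_{\widetilde M}|^2 = K_{\widetilde M}(z,z)<\infty$, so $S_z\in A^2_{(1,0)}(M)$ (after the push-down).

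The second step is the reproducing property. Take any $f\in A^2_{(1,0)}(M)$, so $\widetilde p^*f$ is a $\Gamma$-invariant form on $\widetilde M$ with $\|\widetilde p^*f\|^2_D=\|f\|^2_M<\infty$. I would compute, using the decreasing/unfolding trick,
\[
\langle f, S_z\rangle_M = \int_D \widetilde p^*f \wedge \overline{\sum_{\gamma\in\Gamma}\gamma^*K^z_{\widetilde M}}
= \sum_{\gamma\in\Gamma}\int_D \widetilde p^*f\wedge\overline{\gamma^*K^z_{\widetilde M}}
= \sum_{\gamma\in\Gamma}\int_{\gamma^{-1}D}\widetilde p^*f\wedge\overline{K^z_{\widetilde M}}
= \int_{\widetilde M}\widetilde p^*f\wedge\overline{K^z_{\widetilde M}},
\]
where in the third equality I used the $\Gamma$-invariance of $\widetilde p^*f$ and the change of variables $w\mapsto\gamma(w)$, and in the last I used that $\{\gamma^{-1}D\}_{\gamma\in\Gamma}$ tiles $\widetilde M$. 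The interchange of sum and integral is justified by the absolute convergence and the $L^2$-bound from the first step. By the reproducing property (\ref{eq:reproducing}) on $\widetilde M$ this last integral equals (a sign times) $(\widetilde p^*f)(z)=f(\widetilde p(z))$, i.e.\ $S_z$ reproduces $A^2_{(1,0)}(M)$ at $\widetilde p(z)$; by uniqueness of the Bergman kernel this forces $S_z = K^{\widetilde p(z)}_M$ pulled back, which is exactly (\ref{eq:trans}).

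The main obstacle I anticipate is making the unfolding rigorous while juggling the two roles of $\Gamma$ — its action in the $w$-variable (where the sum lives) versus the need for the combined object to descend in the $z$-variable — and, more technically, justifying the interchange of summation and integration uniformly enough to conclude local uniform convergence of the form (not merely $L^2$-convergence), since that is what the statement ultimately asserts. This is where the explicit decay estimates for the Bergman kernel on hyperbolic surfaces, modeled on the Myrberg-series estimates already in the text, do the real work; once those bounds are in hand the rest is bookkeeping with fundamental domains.
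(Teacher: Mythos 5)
Your proposal takes a genuinely different route from the paper. The paper's proof is very short and rests on two classical facts for Fuchsian groups of convergence type: Myrberg's formula \eqref{eq:myrberg} expresses the Green function of $\D/\Gamma$ as a locally uniformly convergent sum over $\Gamma$ of Green functions on $\D$, and Schiffer's formula recovers the Bergman kernel from the mixed second derivative $\partial_z\partial_{\bar w}$ of the Green function; differentiating Myrberg's series term by term yields \eqref{eq:bergman} for the case $\widetilde M=\D$, and the case $\widetilde M=\D/\widetilde\Gamma$ then follows by writing both $K_{\widetilde M}$ and $K_M$ as $\Gamma$-type sums over $\D$ and cancelling (this is the cocycle manipulation carried out in the text after the proposition). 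Your approach instead tries to show directly that the series $S_z=\sum_\gamma\gamma^*K^z_{\widetilde M}$ is a well-defined element of $A^2_{(1,0)}(M)$ and that it reproduces point evaluation, then invokes uniqueness. This functional-analytic template is a legitimate alternative and would in principle generalize beyond Riemann surfaces; the paper's argument is shorter precisely because Myrberg and Schiffer do the analytic work for free.

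However, there are concrete gaps in your write-up. The central one is the $L^2$ control: the claimed identity $\int_M|S_z|^2=\int_{\widetilde M}|K^z_{\widetilde M}|^2=K_{\widetilde M}(z,z)$ is false. The translates $\gamma^*K^z_{\widetilde M}$ are not orthogonal over a fundamental domain $D$, so $\|S_z\|^2_D$ is not the $L^2$-norm of $K^z_{\widetilde M}$ over $\widetilde M$; moreover, if the proposition is correct, $\|S_z\|_M^2$ must equal $K_M(\widetilde p(z),\widetilde p(z))$, which differs from $K_{\widetilde M}(z,z)$ except in trivial cases. What one actually gets from a careful double unfolding of the quadratic expression $\sum_{\gamma_1,\gamma_2}\int_D(\gamma_1^*K^z)\overline{(\gamma_2^*K^z)}$ is $\|S_z\|^2_D=\mathrm{const}\cdot S_z(z)$, which does give $S_z\in L^2(M)$ once the absolute convergence of $\sum_\gamma K_{\widetilde M}(z,\gamma z)$ is in hand; but justifying that Fubini is itself nontrivial, and in your version there is a circularity — you invoke "the $L^2$-bound from the first step" to justify the interchange that would establish that very bound. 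You will need either a truncation/Fatou argument or a Riesz-representation argument (showing $f\mapsto\langle\widetilde p^*f,K^z_{\widetilde M}\rangle_{\widetilde M}$ is a bounded functional on $A^2(M)$, which requires its own Fubini). Secondary issues: (i) the locally uniform absolute convergence of $\sum_\gamma|\gamma^*K^z_{\widetilde M}|$ is not literally the same estimate as the Myrberg series in the text — that series controls $\sum_\gamma g_\D(z,\gamma w)$, not a derivative — so for $\widetilde M=\D$ you should just use the explicit $K_\D$ and $|\gamma'(w)|=(1-|\gamma(w)|^2)/(1-|w|^2)$, while for general $\widetilde M$ you would need a comparison that, in the paper, is obtained by differentiating Myrberg (i.e., you end up borrowing the paper's method anyway); and (ii) your parenthetical "one must check that $\Gamma$, as a subgroup acting on $\D$, inherits convergence type" misstates the structure — for $\widetilde M=\D/\widetilde\Gamma$ the deck group $\Gamma$ of $\widetilde p$ is a quotient $\Gamma_{\rm Fuchs}/\widetilde\Gamma$, not a subgroup of $\mathrm{Aut}(\D)$, so the convergence-type hypothesis must be applied to $\Gamma_{\rm Fuchs}$ and then transferred, which is exactly the two-step reduction the paper performs.
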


\begin{proof} We first prove the case when $\widetilde M=\D$ and $M=\D/\Gamma$ where $\Gamma$ is
a Fuchsian group of convergence type.  Write $K_{\D/\Gamma}(z, w)=K^*_{\D/\Gamma} (z, w) (\frac{i}2 dz\wedge d\bar w)$ where $z$ and $w$ are holomorpohic coordinates induced by the covering map.  Then\eqref{eq:trans} becomes:
\begin{equation}\label{eq:bergman}
K^*_{\D/\Gamma}(\widetilde p (z), \widetilde{p}(w))\widetilde{p}(z)\overline{\widetilde{p}'(w)}=\sum_{\gamma\in\Gamma} K^*_{\D}(z,
\gamma(w))\overline{\gamma'(w)}.
\end{equation}
The above formula then follows from  differentiating both sides of \eqref{eq:myrberg} with respect to $z$ and $\bar w$ and by applying Schiffer's formula \cite{Schiffer46}.

We now prove the general case. Let $\widetilde M=\D/\widetilde\Gamma$ and $M=\D/\Gamma$ with $\widetilde\Gamma$ a normal subgroup of $\Gamma$. Applying \eqref{eq:bergman} to both $\D/\widetilde\Gamma$ and $\D/\Gamma$ and then combining the results, we then obtain formula~\eqref{eq:trans}.
\end{proof}

We are now in position to prove Theorem~\ref{th:hyp}.  Let $M_j=\D/\Gamma_j$ be a tower of coverings on a hyperbolic Riemann surface $M=\D/\Gamma$ and $\widetilde M=\D/\widetilde\Gamma$ be
the top manifold. Then $\Gamma_{j}$ is a decreasing sequence of normal subgroups such that  $\cap\Gamma_j=\widetilde\Gamma$. Let $\widehat\Gamma_j=\Gamma_j/\widetilde\Gamma$.
Let $\widetilde{p}_j\colon \widetilde M\to M_j$ be the natural projection. Applying \eqref{eq:bergman} to $\widetilde{p}_j$, we have
\begin{equation}\label{eq:berg1}
K^*_{M_j}(\widetilde{p}(z), \widetilde{p}(w))\widetilde{p_j}(z)\overline{\widetilde{p}_j^\prime(w)}=\sum_{\widehat\gamma\in\widehat\Gamma_j} K^*_{\widetilde M}(z,
\widehat\gamma(w))\overline{\widehat\gamma'(w)}.
\end{equation}
For $\widehat\gamma\in\widehat\Gamma_j$, write $\widehat\gamma=[\gamma_j]$ with $\gamma_j\in\Gamma_j$. Let $\widehat p\colon \D\to \D/\widetilde\Gamma$ be the natural projection. Let $w\in\D$ and $\widetilde \gamma\in\widetilde\Gamma$. We have
\[
\widetilde\gamma'(\gamma_j(w))\widehat\gamma'(\widehat p(w))=\frac{\widehat p'(\gamma_j(w))\widehat\gamma'(\widehat p(w))}{\widehat p'(\widetilde\gamma(\gamma_j(w)))}=\frac{\widehat p'(\gamma_j(w))(\widetilde\gamma\circ\gamma_j)'(w)}{\widehat p'(w)}.
\]
Therefore, for $z, w\in\D$,
\begin{align}\label{eq:berg2}
K^*_{\widetilde M}(\widehat p(z), \widehat\gamma (\widehat p(w)))\overline{\widehat\gamma'(w)}&=K^*_{\widetilde M}(\widehat p(z), \widehat p(\gamma_j(w)))\overline{\widehat\gamma'(\widehat p(w))} \notag\\
&=\frac{1}{\widehat p'(z)}\frac{1}{\overline{\widehat p'(\gamma_j(w))}}\sum_{\widetilde\gamma\in\widetilde\Gamma} K^*_{\D}(z, \widetilde\gamma(\gamma_j(w)))\overline{\widetilde\gamma'(\gamma_j(w))}\cdot\overline{\widehat\gamma'(\widehat p(w))}\notag\\
&=\frac{1}{\widehat p'(z)}\frac{1}{\overline{\widehat p'(w)}}\sum_{\widetilde\gamma\in\widetilde\Gamma} K^*_{\D}(z, \widetilde\gamma(\gamma_j(w)))\overline{(\widetilde\gamma\circ\gamma_j)'(w)}.
\end{align}
Set
\[
E_j=K^*_{M_j}(\widetilde p(\widehat p(z)), \widetilde p(\widehat p(w)))\widetilde{p}'_j(\widehat p (z))\overline{\widetilde{p}_j^\prime(\widehat p(w))}-K^*_{\widetilde M}(\widehat p(z),
\widehat p(w)).
\]
Combining \eqref{eq:berg1} and \eqref{eq:berg2}, we then have:
\begin{align*}
E_j&=\sum_{\widehat\gamma\in\widehat\Gamma_j\setminus\{1\}} K^*_{\widetilde M}(\widehat p(z),
\widehat\gamma(\widehat p(w)))\overline{\widehat\gamma'(\widehat p(w))}\\
&=\frac{1}{\widehat p'(z)\overline{\widehat p'(w)}}\sum_{[\gamma_j]\in\widehat\Gamma_j\setminus\{1\}}\sum_{\widetilde\gamma\in\widetilde\Gamma}K^*_{\D}(z, \widetilde\gamma\circ\gamma_j (w))\overline{(\widetilde\gamma\circ\gamma_j)'(w)}\\
&=\frac{1}{\widehat p'(z)\overline{\widehat p'(w)}} \sum_{\gamma\in\Gamma_j\setminus\widetilde\Gamma}K^*_{\D}(z, \gamma(w))\overline{\gamma'(w)}.
\end{align*}
It follows from a simple computation that
\[
|E_j|\le \frac{1}{\pi|\widehat p'(z)\widehat p'(w)|(1-|z|)^2(1-|w|)^2}\sum_{\gamma\in\Gamma_j\setminus\widetilde\Gamma} (1-|\gamma(0)|^2).
\]
Since $\cap\Gamma_j=\widetilde\Gamma$, we have $|E_j|\to 0$ locally uniformly as $j\to\infty$. This concludes the proof of Theorem~\ref{th:hyp}.

\begin{remark} We do not use the condition $[\Gamma: \Gamma_j]<\infty$ in the above proof. Theorem~\ref{th:hyp} remains true even if the finiteness assumption on the indices $[\Gamma:\Gamma_j]$ is dropped from the definition of tower of coverings.
\end{remark}

\section{Complete K\"{a}hler manifolds}\label{sec:g}

In this section, we study towers of coverings on complete K\"{a}hler manifolds. We first establish
auxiliary spectral theoretic results. Let $(M, \omega)$ be a complete K\"{a}hler manifold.  Let
$\square^{M}_{p, q}$ be the $\dbar$-Laplacian on $L_{(2)}^{p, q} (M)$. We will use  $\sigma(\square)$ and
$\sigma_e(\square)$ to denote the spectrum and the essential spectrum of $\square$ respectively.  The
following lemma is well-known; we provide a proof for completeness.

\begin{lemma}\label{lm:ess} Suppose there is a compact set $K\subset M$ and a constant
$C>0$ such that
\begin{equation}\label{eq:f}
\|u\|^2\le C\left(\|\bar{\partial} u \|^2+\|\bar{\partial}^\ast u\|^2+\int_K |u|^2 dV\right)
\end{equation}
holds for all $u\in {\rm Dom\,}\bar{\partial} \cap {\rm Dom\,}\bar{\partial}^\ast\cap L^{p,q}_{(2)}(M)$.
Then $\sigma_e(\square^{p, q})\subset [\frac1C,\ 0)$.
\end{lemma}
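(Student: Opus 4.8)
The plan is to deduce the lemma from Weyl's criterion for the essential spectrum, using interior elliptic regularity together with Rellich's lemma to kill the localized term in \eqref{eq:f}. (The stated inclusion should be read as $\sigma_e(\square^{p,q})\subset[1/C,+\infty)$: since $\square^{p,q}$ is a non-negative self-adjoint operator we already have $\sigma_e(\square^{p,q})\subset[0,+\infty)$, so only the lower bound $1/C$ is at issue.) Suppose for contradiction that $\lambda\in\sigma_e(\square^{p,q})$ with $0\le\lambda<1/C$. By Weyl's criterion there is a \emph{singular sequence}: $u_n\in{\rm Dom}\,\square^{p,q}$ with $\|u_n\|=1$, $u_n\rightharpoonup 0$ weakly in $L^{p,q}_{(2)}(M)$, and $\|(\square^{p,q}-\lambda)u_n\|\to 0$ (one may take $\{u_n\}$ orthonormal, which forces the weak convergence to zero automatically). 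Since ${\rm Dom}\,\square^{p,q}\subset{\rm Dom}\,\bar\partial\cap{\rm Dom}\,\bar\partial^\ast$, integration by parts gives
$$\|\bar\partial u_n\|^2+\|\bar\partial^\ast u_n\|^2=\langle\square^{p,q}u_n,u_n\rangle=\langle(\square^{p,q}-\lambda)u_n,u_n\rangle+\lambda\|u_n\|^2\longrightarrow\lambda,$$
so in particular $\|\bar\partial u_n\|$ and $\|\bar\partial^\ast u_n\|$ remain bounded.

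The crux is to show $\int_K|u_n|^2\,dV\to 0$ along a subsequence. Fix a relatively compact open set $\Omega$ with $K\subset\Omega\Subset M$. The operator $\square^{p,q}$ is elliptic (its principal symbol is $\frac12|\xi|^2\,{\rm Id}$), and $\square^{p,q}u_n=\lambda u_n+(\square^{p,q}-\lambda)u_n$ is bounded in $L^2(\Omega)$ while $\{u_n\}$ is bounded in $L^2(\Omega)$; hence interior elliptic regularity shows $\{u_n\}$ is bounded in $H^2$ on a neighborhood of $K$. Rellich's lemma then extracts a subsequence $\{u_{n_k}\}$ converging strongly in $L^2(K)$, and because $u_{n_k}\rightharpoonup 0$ weakly in $L^2(M)$ the strong limit must be $0$, so $\int_K|u_{n_k}|^2\,dV\to 0$. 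Feeding $u_{n_k}$ into the hypothesis \eqref{eq:f} and letting $k\to\infty$,
$$1=\|u_{n_k}\|^2\le C\big(\|\bar\partial u_{n_k}\|^2+\|\bar\partial^\ast u_{n_k}\|^2+\int_K|u_{n_k}|^2\,dV\big)\longrightarrow C\lambda,$$
which yields $\lambda\ge 1/C$, contradicting $\lambda<1/C$. Hence $\sigma_e(\square^{p,q})\subset[1/C,+\infty)$.

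I expect the step invoking interior elliptic regularity and Rellich compactness to be the only substantive point: one must know that boundedness of $u_n$, $\bar\partial u_n$, and $\bar\partial^\ast u_n$ in $L^2$ forces $L^2_{\rm loc}$-precompactness of $\{u_n\}$, so that the weak convergence $u_n\rightharpoonup 0$ actually annihilates the local error term $\int_K|u_n|^2\,dV$. The remaining ingredients — Weyl's criterion for $\sigma_e$ of a self-adjoint operator, the identity $\langle\square u,u\rangle=\|\bar\partial u\|^2+\|\bar\partial^\ast u\|^2$ on ${\rm Dom}\,\square$, and the final substitution into \eqref{eq:f} — are routine.
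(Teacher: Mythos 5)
Your argument is correct and follows essentially the same route as the paper's proof: a Weyl singular sequence, the identity $\langle\square u_n,u_n\rangle=\|\bar\partial u_n\|^2+\|\bar\partial^\ast u_n\|^2\to\lambda$, interior elliptic regularity plus Rellich to pass to a subsequence converging strongly (to $0$, by weak convergence) in $L^2(K)$, and then substitution into \eqref{eq:f}. You are also right that the displayed conclusion $\sigma_e(\square^{p,q})\subset[\frac1C,\ 0)$ is a typo for $\sigma_e(\square^{p,q})\subset[\frac1C,\ \infty)$.
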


\begin{proof} Let $\lambda\in\sigma_e(\square)$. Then by the Weyl criteria (cf.
\cite[Theorem~7.24]{Weidmann80}), there exists a sequence $u_j\in\dom(\square)$ such that
$\|u_j\|=1$, $\|(\square-\lambda)u_j\|\to 0$, and $u_j\to 0$ weakly. Thus
\[
\lim_{j\to\infty} \left(\|\dbar u_j\|^2+\|\dbar^* u_j\|^2\right)=\lambda.
\]
By the interior ellipticity of $\square$ and the Rellich compactness theorem, there exists a subsequence
$u_{j_k}$ converging in the $L^2$-norm on $K$. By the assumption, the limit must be 0.  Plugging $u_{j_k}$
into \eqref{eq:f} and taking the limit, we then obtain $\lambda\ge 1/C$. \end{proof}

Note that if $K$ is an empty set, then \eqref{eq:f} is equivalent to $\sigma(\square)\subset [\frac1C, \
\infty)$.  Furthermore, if $\sigma_{e}(\square)\subset [\frac1C,\ \infty)$, then $\sigma(\square)\cap [0,
\ \frac1C)$ is either empty or consists of eigenvalues of finite multiplicities (cf. \cite[Theorem
4.5.2]{Davis95}).

Now let $(M,\omega)$ be as in Theorem~\ref{th:g}. Then there exists a $C^2$ psh function $\psi$ on $M\setminus K$
satisfying
$$
C_0^{-1}\omega \le \partial\bar{\partial}\psi \le C_0 \omega,\ \ \ |\bar{\partial}\psi|^2_\omega\le C_0
$$
where $K$ is a compact subset of $M$ and $C_0>0$ is a constant. After a multiple of a cut-off function, we
may assume that $\psi$ is a $C^2$ real-valued function on $M$ such that the above inequalities hold outside
a geodesic ball $B(z_0,R)=\{z\in M; \ d_M(z_0, z)<R\}$ where $d_M(z_0,\cdot)$ is the distance to a fixed point
$z_0\in M$. Write $\psi_j=p_j^\ast(\psi)$, $\omega_j^\ast=p_j^\ast(\omega)$, and
$d_j (\cdot)=p_j^\ast(d_M(z_0,\cdot))$ where $p_j:M_j\rightarrow M$ is the natural projection.  Note that $d_j$ need not be the distance function of $M_j$. Let $K_j=p^{-1}_j(\overline{B(z_0,2R)})$.

\medskip

\begin{lemma}\label{lm:f2}  There is a
constant $C_1=C(n,C_0,R)>0$ such that
\begin{equation}\label{eq:fundamental}
\|u\|^2\le C_1\left(\|\bar{\partial} u \|^2+\|\bar{\partial}^\ast u\|^2+\int_{K_j} |u|^2 dV\right)
\end{equation}
holds for all $u\in {\rm Dom\,}\bar{\partial} \cap {\rm Dom\,}\bar{\partial}^\ast\cap L_{n,1}^{2}(M_j)$.
As a consequence, $\sigma_e(\square_{n, 1}^{M_j})\subset [\frac1{C_1}, \infty)$.
\end{lemma}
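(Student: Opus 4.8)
The plan is to establish the weighted $L^2$-estimate \eqref{eq:fundamental} on each $M_j$ by a Donnelly--Fefferman type twisted Bochner--Kodaira argument, using the pulled-back plurisubharmonic function $\psi_j$ as the weight away from the fixed compact piece and a cut-off near it. First I would fix a smooth function $\chi\colon M\to[0,1]$ with $\chi\equiv 1$ on $\overline{B(z_0,R)}$ and $\chi\equiv 0$ outside $B(z_0,2R)$, and form the weight $\varphi_j=(1-p_j^*\chi)\,\psi_j$ on $M_j$; outside $K_j=p_j^{-1}(\overline{B(z_0,2R)})$ this equals $\psi_j$ and therefore satisfies $C_0^{-1}\omega_j^*\le i\partial\bar\partial\varphi_j\le C_0\,\omega_j^*$ and $|\bar\partial\varphi_j|^2_{\omega_j^*}\le C_0$, while inside $K_j$ all derivatives of $\varphi_j$ are bounded by a constant depending only on $n$, $C_0$, $R$. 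The point of the construction is that these bounds are uniform in $j$ because $\psi$, $\chi$, $\omega$ live downstairs on $M$ and are merely pulled back.

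The key step is then the twisted a priori inequality. Working on $(n,1)$-forms $u\in C^\infty_0\cap\dom\dbar\cap\dom\dbarstar$ on the complete Kähler manifold $M_j$ (completeness lets me reduce to compactly supported $u$ by the standard cut-off exhaustion, using $\tau_j\to\infty$ only implicitly via completeness of the push-down metric), I would apply the Donnelly--Fefferman estimate in the form: for a bounded weight $\eta$ with $i\partial\bar\partial\eta-i\partial\eta\wedge\bar\partial\eta>0$ in the sense of currents on $(n,1)$-forms, one has
\[
\int_{M_j}\langle(i\partial\bar\partial\eta-i\partial\eta\wedge\bar\partial\eta)\Lambda u,u\rangle\,dV\lesssim \|\dbar u\|^2+\|\dbarstar u\|^2 .
\]
Taking $\eta=\varphi_j$ outside $K_j$, the hypothesis $\partial\bar\partial\psi\ge C_0^{-1}\partial\psi\wedge\bar\partial\psi$ guarantees $i\partial\bar\partial\varphi_j-i\partial\varphi_j\wedge\bar\partial\varphi_j\gtrsim \omega_j^*$ there, so the curvature term controls $\int_{M_j\setminus K_j}|u|^2\,dV$ from the $\dbar$-energy up to a constant $C(n,C_0,R)$; inside $K_j$ the discrepancy (the extra terms coming from $\chi$ and from $\eta$ not being globally defined) is absorbed into the $\int_{K_j}|u|^2$ term, since all the relevant coefficients are bounded there. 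Adding the two regions yields \eqref{eq:fundamental}. Finally, the essential spectrum statement is immediate from Lemma~\ref{lm:ess} applied on $M_j$ with the compact set $K_j$ and constant $C_1$.

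The main obstacle I anticipate is making the cut-off argument near $K_j$ genuinely uniform in $j$: the weight $\varphi_j$ is \emph{not} plurisubharmonic on $K_j$, and one must check that the error terms produced by commuting $\dbar$ past the cut-off and past $e^{\varphi_j}$ — which involve $\nabla\chi$, $i\partial\bar\partial\chi$, and $\psi_j$ restricted to $K_j$ — are all dominated by $C(n,C_0,R)\bigl(\|\dbar u\|^2+\|\dbarstar u\|^2+\int_{K_j}|u|^2\bigr)$ with a constant independent of $j$. This is where the fact that $\psi$, $\chi$, and the metric all descend to the fixed base manifold $M$ is essential: their sup-norms and those of finitely many derivatives over $\overline{B(z_0,2R)}$ bound the corresponding quantities on $K_j$. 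A secondary technical point is justifying the density/completeness reduction so that the twisted identity may be applied to the given $u\in\dom\dbar\cap\dom\dbarstar$ rather than only to smooth compactly supported forms; this is standard for complete Kähler manifolds (Andreotti--Vesentini) and I would invoke it without reproving it.
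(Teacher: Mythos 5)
Your overall strategy --- a Donnelly--Fefferman type twisted estimate driven by the curvature and gradient bounds on $\psi$, with a cut-off producing an error term supported on $K_j$ --- is exactly the route the paper takes, and the crucial observation that all constants descend from the fixed base $M$ and are therefore uniform in $j$ is correctly identified. But there are two real gaps in the way you invoke the Donnelly--Fefferman estimate as a black box. First, the version you state requires $i\partial\bar\partial\eta - i\partial\eta\wedge\bar\partial\eta>0$, and you claim the hypothesis $\partial\bar\partial\psi\ge C_0^{-1}\partial\psi\wedge\bar\partial\psi$ delivers this for $\eta=\varphi_j$ outside $K_j$; that implication fails when $C_0>1$, since one then cannot conclude $\partial\bar\partial\psi\ge\partial\psi\wedge\bar\partial\psi$. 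The fix is to rescale the weight to $\tau\psi_j$ with $\tau$ small, so that $\tau\partial\bar\partial\psi_j-\tau^2\partial\psi_j\wedge\bar\partial\psi_j\ge\tau(1-\tau C_0)\partial\bar\partial\psi_j\gtrsim\omega_j^\ast$, which requires $\tau<C_0^{-1}$ (the paper uses $\tau<C_0^{-2}/2$). You never introduce this scaling parameter, so your displayed estimate does not actually apply. Second, the Donnelly--Fefferman inequality in the form you quote assumes $\eta$ is bounded; but $\psi$ may well be unbounded (e.g.\ $-\log(-\rho)$ on a hyperconvex base), so $\varphi_j=\psi_j$ outside $K_j$ need not be bounded, and the black-box statement does not apply.

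The paper's arrangement sidesteps both issues and is tidier than yours: rather than modifying the weight and then absorbing errors inside $K_j$, it runs the twisted Bochner--Kodaira identity with weight $\tau\psi_j$ on forms $v$ that vanish on $p_j^{-1}(B(z_0,R))$ (so the non-plurisubharmonic region never contributes), performs the substitution $w=e^{-\tau\psi_j/2}v$ and a Cauchy--Schwarz step to obtain a \emph{clean, unweighted} estimate $\|\dbar w\|^2+\|\dbarstar w\|^2\ge C_1\|w\|^2$ valid for every compactly supported $w$ vanishing on $p_j^{-1}(B(z_0,R))$, and only then sets $w=\chi(d_j/R)u$ for a general $u$. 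The error coming from $\dbar\chi$ and $\dbar\chi\lrcorner$ is supported in $K_j$ and is handled by a one-line Schwarz inequality. In your version that same error bookkeeping is where the real work lives (as you acknowledge), but because it is tangled with the fact that $\varphi_j$ fails to be plurisubharmonic on $K_j$, you would have to reopen the proof of the twisted estimate and track the sign-indefinite curvature contributions there; nothing in your sketch shows this. Your reduction to compactly supported forms via Andreotti--Vesentini is correct and matches the paper, and the passage to the essential spectrum via Lemma~\ref{lm:ess} is exactly right.
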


\begin{proof} The method, which goes back to Donnelly-Fefferman \cite{DonnellyFefferman83} and Gromov
\cite{Gromov91}, is well known (see \cite{OhsawaTakegoshi87, DiederichOhsawa94,Siu96, BerndtssonCharpentier00, Mcneal02, ChenFu11} for related results). We provide a proof for completeness. Let  $v\in C_0^{n,1}(M_j)$
such that $v=0$ on $p_j^{-1}(B(z_0,R))$. By the Bochner-Kodaira-Nakano formula, we have for any
$\tau>0$,
\begin{equation}\label{eq:add1}
\|\bar{\partial} {v}\|^2_{\tau\psi_j}+\|\bar{\partial}^\ast_{\tau\psi_j} v\|^2_{\tau\psi_j}\ge \int_{{M}_j}
\langle [\sqrt{-1}\tau\partial\bar{\partial}\psi_j,\Lambda]v,v\rangle e^{-\tau\psi_j}dV\ge
C_0^{-1}\tau\|v\|^2_{\tau\psi_j}
\end{equation}
where $\Lambda$ is the adjoint of $Lu=\omega\wedge u$,  $\|\cdot\|_{\tau\psi_j}$ the $L^2-$norm with weight $\tau\psi_j$, and
$\bar{\partial}^\ast_{\tau\psi_j}$ the adjoint of $\bar{\partial}$ with respect to the inner product
$\langle\cdot,\cdot\rangle_{\tau\psi_j}$ (see \cite[p.~68]{B02}). Let $w=e^{-\tau\psi_j/2}v$. Note that
\begin{align*}
\bar{\partial} v & = e^{\tau\psi_j/2}\left(\bar{\partial}w+\frac{\tau}2\bar{\partial}\psi_j\wedge
w\right)\\
\intertext{and}
\bar{\partial}^\ast_{\tau\psi_j} v & =  e^{\tau\psi_j/2}\left(\bar{\partial}^\ast
w-\frac{\tau}2\bar{\partial}\psi_j\lrcorner\, w\right)
\end{align*}
where $"\lrcorner"$ is the contraction operator. It follows Schwarz' inequality that
$$
\|\bar{\partial} {v}\|^2_{\tau\psi_j}+\|\bar{\partial}^\ast_{\tau\psi_j} v\|^2_{\tau\psi_j}\le
2\|\bar{\partial}w\|^2+2\|\bar{\partial}^\ast w\|^2+C_0\tau^2\|w\|^2.
$$
Substituting this inequality into \eqref{eq:add1}, we have
\begin{equation}\label{eq:add2}
\|\bar{\partial}w\|^2+\|\bar{\partial}^\ast w\|^2\ge \frac{\tau}2 (C_0^{-1}-C_0\tau)\|w\|^2\ge C_1\|w\|^2
\end{equation}
provided $\tau<C_0^{-2}/2$. Now fix such a $\tau$.
 Let $0\le \chi\le 1$ be a $C^\infty$ cut-off function
such that $\chi=0$ on $(-\infty,1)$ and $\chi=1$ on $(2,\infty)$. Let $u\in
C_0^{n,1}(M_j)$ and $w=\chi(d_j/R) u$. Then
\begin{eqnarray*}
\bar{\partial}w & = & \chi(d_j/R)\bar{\partial} u+ \bar{\partial}\chi(d_j/R)\wedge u; \\ \bar{\partial}^\ast
w & = & \chi(d_j/R)\bar{\partial}^\ast u- \bar{\partial}\chi(d_j/R)\lrcorner\,u.
\end{eqnarray*}
From \eqref{eq:add2} and Schwarz's inequality, we have
$$
\|u\|^2\le C_1\left(\|\bar{\partial} u \|^2+\|\bar{\partial}^\ast u\|^2+\int_{K_j} |u|^2 dV\right).
$$
By Andreotti-Vesentini's approximation theorem \cite{AndreottiVesentini65}, the same inequality holds for
all $u\in {\rm Dom\,}\bar{\partial} \cap {\rm Dom\,}\bar{\partial}^\ast\cap L_{n,1}^{2}(M_j)$.  From
Lemma~\ref{eq:f}, we have $\sigma_e(\square_{n, 1}^{M_j})\subset [\frac1{C_1}, \infty)$.\end{proof}

Let $C_1$ be the constant in Lemma~\ref{lm:f2}. For $0<\delta<\frac1{C_1}$, let ${\mathcal
H}^{n,1}_{(2)}(M_j, \delta)$ be the linear span of $(n,1)$ eigenforms of $\Box$, with corresponding
eigenvalues smaller than or equal to $\delta$. It is a finite dimensional complex vector space. Let
$\{\phi_k\}$ be a orthonomal basis of eigenforms in ${\mathcal H}^{n,1}_{(2)}(M_j, \delta)$, we define the
corresponding Bergman kernel function as
$$
|K^1_{M_j,\delta}|=\sum |\phi_k|^2.
$$
Then we have
$$
{\rm dim\,}{\mathcal H}^{n,1}_{(2)}(M_j, \delta)=\int_{M_j} |K^1_{M_j,\delta}|dV.
$$
It is easy to see that
\begin{equation}\label{eq:bern}
|K^1_{M_j,\delta}|(z)\le n \sup\left\{|f|^2(z):f\in {\mathcal H}^{n,1}_{(2)}(M_j, \delta), \|f\|=1\right\}
\end{equation}
(e.g., \cite[Lemma 4.1]{Berndtsson02}).

\begin{lemma}\label{lm:f3} For every $\epsilon>0$, there exist
$0<\delta_0<\frac1{C_1}$ and $j_0>0$ such that for $\delta\le \delta_0$ and $j\ge j_0$,
$$
|K^1_{M_j,\delta}|(z)<\epsilon, \ \ \ \forall z\in K_j.
$$
\end{lemma}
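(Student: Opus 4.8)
The plan is to argue by contradiction, using a concentration--compactness extraction: a hypothetical sequence of ``bad'' low-energy $(n,1)$-forms on the $M_{j_k}$ would produce, in the limit, a nonzero $L^2$-harmonic $(n,1)$-form on the fixed manifold $\widetilde M$, which is impossible under condition~(2) of Theorem~\ref{th:g}. So suppose the conclusion fails: there are $\epsilon_0>0$, $\delta_k\to 0$, indices $j_k\to\infty$, and points $z_k\in K_{j_k}$ with $|K^1_{M_{j_k},\delta_k}|(z_k)\ge\epsilon_0$. My first step is to recenter. I would observe that $|K^1_{M_j,\delta}|$ is $(\Gamma/\Gamma_j)$-invariant — each deck transformation of $p_j\colon M_j\to M$ is an isometric biholomorphism, hence commutes with $\Box$, preserves the finite-dimensional space ${\mathcal H}^{n,1}_{(2)}(M_j,\delta)$, and carries an orthonormal basis to another one, so $\sum_k|\phi_k|^2$ is unchanged — so $|K^1_{M_j,\delta}|$ descends to $M$. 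Fixing a compact set $A\subset\widetilde M$ with $\widetilde p(A)=\overline{B(z_0,2R)}$ (possible since $\overline{B(z_0,2R)}$ is compact and $\widetilde p$ is a covering), I can choose $a_k\in A$ with $\widetilde p(a_k)=p_{j_k}(z_k)$; then $\widetilde p_{j_k}(a_k)$ lies in the $p_{j_k}$-fiber of $z_k$, so $|K^1_{M_{j_k},\delta_k}|(\widetilde p_{j_k}(a_k))\ge\epsilon_0$, and by \eqref{eq:bern} there is $f_k\in{\mathcal H}^{n,1}_{(2)}(M_{j_k},\delta_k)$ with $\|f_k\|=1$ and $|f_k(\widetilde p_{j_k}(a_k))|^2>\epsilon_0/(2n)$.

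Next I would lift and extract uniform estimates. Set $\tilde f_k=\widetilde p_{j_k}^{*}f_k$, a $\Gamma_{j_k}$-invariant $(n,1)$-form on $\widetilde M$ with $|\tilde f_k(a_k)|^2>\epsilon_0/(2n)$. Expanding $f_k$ in eigenforms gives $\|\Box^l f_k\|\le\delta_k^{l}$ for all $l\ge 0$; since $\widetilde p_{j_k}$ is a local isometric biholomorphism, injective on $D_{j_k}(a_k)\supset B(a_k,\tau_{j_k}(a_k))$, I get $\|\tilde f_k\|_{L^2(B(a_k,\tau_{j_k}(a_k)))}\le 1$ and $\|\Box^l\tilde f_k\|_{L^2(B(a_k,\tau_{j_k}(a_k)))}\le\delta_k^{l}$. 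By Lemma~\ref{lm:inj}, $\rho_k:=\min_A\tau_{j_k}\to\infty$, so these bounds hold on $B(a_k,\rho_k)$. After passing to a subsequence $a_k\to z_*\in A$, for each fixed $r$ and all large $k$ one has $B(z_*,r)\subset B(a_k,\rho_k)$, hence $\|\tilde f_k\|_{L^2(B(z_*,r))}\le 1$ and $\|\Box^l\tilde f_k\|_{L^2(B(z_*,r))}\le\delta_k^{l}$. Iterating the interior elliptic estimate for $\Box$ on the \emph{fixed} manifold $\widetilde M$ then bounds $\{\tilde f_k\}$ uniformly in $C^1(B(z_*,r/2))$ for every $r$.

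To conclude, I would pass to the limit. By Arzel\`a--Ascoli and a diagonal argument, a subsequence of $\tilde f_k$ converges in $C^1_{\rm loc}(\widetilde M)$ to an $(n,1)$-form $f_*$. Since $\|\Box\tilde f_k\|_{L^2(B(z_*,r))}\le\delta_k\to 0$, pairing against compactly supported smooth forms gives $\Box f_*=0$ weakly, so $f_*$ is smooth and $\Box$-harmonic. For each $r$, $\|f_*\|_{L^2(B(z_*,r))}=\lim_k\|\tilde f_k\|_{L^2(B(z_*,r))}\le 1$, and letting $r\to\infty$ gives $f_*\in{\mathcal H}^{n,1}_{(2)}(\widetilde M)$. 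But $|f_*(z_*)|^2=\lim_k|\tilde f_k(a_k)|^2\ge\epsilon_0/(2n)>0$, so $f_*\ne 0$. This contradicts ${\mathcal H}^{n,1}_{(2)}(\widetilde M)=\{0\}$, which holds because condition~(2) of Theorem~\ref{th:g}, via the argument of Lemma~\ref{lm:f2} with $K=\emptyset$, yields $\|u\|^2\le C(\|\bar\partial u\|^2+\|\bar\partial^* u\|^2)$ for all $u\in{\rm Dom\,}\bar\partial\cap{\rm Dom\,}\bar\partial^*\cap L^{n,1}_{(2)}(\widetilde M)$, whence $\sigma(\Box_{n,1}^{\widetilde M})\subset[\frac{1}{C},\infty)$ and $\ker\Box_{n,1}^{\widetilde M}=\{0\}$.

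The delicate step — and the one I expect to be the main obstacle — is this extraction of $f_*$. One must: (i) recenter the forms at the fixed compact set $A\subset\widetilde M$, which is legitimate only because of the $\Gamma/\Gamma_{j_k}$-invariance of $|K^1_{M_j,\delta}|$, so that the limiting process genuinely takes place on a single manifold with locally uniform geometry; (ii) use $\tau_{j_k}(a_k)\to\infty$ so that the $L^2$-mass bound $\le 1$ survives on arbitrarily large balls and hence descends to $f_*$ on all of $\widetilde M$ (this is where the ``spreading out'' of the tower enters); and (iii) exploit the quantitative almost-harmonicity $\|\Box^l f_k\|\le\delta_k^{l}$ through iterated elliptic regularity to obtain $C^1$, hence pointwise, convergence in every dimension (plain $W^2$-bounds do not embed into $C^0$ once $n\ge 2$). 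Everything else — the Sobolev/Rellich bookkeeping, the choice of $A$, and the verification that $\Box$ commutes with the relevant pullbacks — is routine.
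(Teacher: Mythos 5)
Your argument is correct, but it takes a contradiction--compactness route where the paper argues directly. In the paper's proof, one takes the extremizer $f$ in \eqref{eq:bern} at $z\in K_j$, cuts off at radius $\tau_j(z)/2$ to get $\rho=\kappa(d_{M_j}(z,\cdot)/\tau_j(z))f$, lifts to a compactly supported $\widetilde\rho$ on $\widetilde M$ (well-defined since $B(z,\tau_j(z))$ sits in a Dirichlet fundamental domain), and applies the fundamental estimate supplied by condition~(2) — i.e.\ the analogue of Lemma~\ref{lm:f2} with $K=\emptyset$ — directly to $\widetilde\rho$, yielding $\|\widetilde\rho\|^2_{\widetilde M}\le C\left(\sup|\kappa'|^2\,\tau_j^{-2}(z)+\delta\right)$. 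Since $p_j(K_j)=\overline{B(z_0,2R)}$ is compact, $\tau_j(z)\ge 2r$ for a fixed $r>0$, so $\rho=f$ on $B(z,r)$; G{\rm\aa}rding's inequality and Sobolev embedding (with constants uniform in $j$, since $B(z,r)$ is a normal coordinate ball lifted from $M$) then give $|f(z)|^2\le C\left(\sup|\kappa'|^2\,\tau_j^{-2}(z)+\delta+\delta^{2m}\right)$, and the lemma follows from \eqref{eq:bern} and Lemma~\ref{lm:inj}. Your proof uses precisely the same three ingredients — growth of the injectivity radius, condition~(2) on $\widetilde M$, and elliptic regularity from the polynomial eigenvalue bounds $\|\Box^l f\|\le\delta^l$ — but reorganizes them: the $\Gamma/\Gamma_j$-invariance of $|K^1_{M_j,\delta}|$ (which you correctly verify) lets you recenter at a fixed compact $A\subset\widetilde M$, and a hypothetical bad sequence then extracts a nonzero element of ${\mathcal H}^{n,1}_{(2)}(\widetilde M)$, which condition~(2) forbids. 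Both are sound; the paper's direct version yields an explicit decay rate in $\tau_j$ and $\delta$, while your extraction isolates the ``soft'' mechanism (vanishing of ${\mathcal H}^{n,1}_{(2)}(\widetilde M)$) at the cost of quantitativity and a subsequence argument.
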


\begin{proof} Let $z\in K_j$. Let $f\in {\mathcal H}^{n, 1}(M_j, \delta)$ be the form that realizes
the supremum on the right side of \eqref{eq:bern}.  Let $\kappa$ be a $C^\infty$ cut-off function such that
 $0\le\kappa\le 1$, $\kappa=1$ on $(-\infty, 1/2)$, and
$\kappa=0$ on $(1,\infty)$. Let
$$
\rho=\kappa(d_{M_j}(z,\cdot)/\tau_j(z))f.
$$
Here we use $\tau_j(\cdot)$ to denote also the push-down to $M_j$ of $\tau_j(\cdot)$ from $\widetilde M$ (as defined by \eqref{eq:tau} with $\Gamma$ replaced by $\Gamma_j$). Recall that
$\widetilde{p}_j\colon\widetilde{M}\to M_j$ is the natural projection. Let $\widetilde{\rho}=\widetilde{p}_j^*(\rho)$. By a similar argument as in the proof of Lemma~\ref{lm:f2}, we have
\begin{eqnarray*}
\|\widetilde{\rho}\|^2_{\widetilde{M}} &\le & C
(\|\bar{\partial}\widetilde\rho\|^2_{\widetilde{M}}+\|\bar{\partial}^\ast
\widetilde\rho\|^2_{\widetilde{M}})\\ &\le&
C\left(\frac{\sup|\kappa'|^2}{\tau_j^2(z)}+\|\bar{\partial}f\|^2_{M_j}+\|\bar{\partial}^\ast
f\|^2_{M_j}\right)\\ &\le& C\left(\frac{\sup|\kappa'|^2}{\tau_j^2(z)}+\delta\right).
\end{eqnarray*}
Since $p_j(K_j)=\overline{B(z_0,2R)}$ is compact, there is a constant $r=r(R)>0$ such that
$\tau_j(z)>2r$ for all $j$. Thus $ \rho= f$ on $B(z,r)$. Using G{\rm \aa}rding's inequality  together with Sobolev's estimates, we have
that for sufficiently large $j$ and $m>n$,
\begin{eqnarray*}
|f|^2(z) &\le & C\left(\int_{B(z,r)}|f|^2dV+\int_{B(z,r)}|\Box^{(m)} f|^2dV\right)\\ & \le & C
\left(\|\widetilde\rho\|^2_{\widetilde{M}}+\|\Box^{(m)} f\|^2_{M_j}\right)\\ &\le & C
\left(\frac{\sup|\kappa'|^2}{\tau_j^2(z)}+\delta+\delta^{2m}\right),
\end{eqnarray*}
where the constants depending only on $m$ and $r$.  Lemma~\ref{lm:f3} then follows from
\eqref{eq:bern} and Lemma~\ref{lm:inj}. \end{proof}

\begin{lemma}\label{lm:f4} For every $\epsilon>0$, there exist
$0<\delta_0<\frac1{C_1}$ and $j_0>0$ such that for each $\delta\le \delta_0$ and $j\ge j_0$,
$$
{\rm dim\,}{\mathcal H}^{n,1}_{(2)}(M_j, \delta)\le \epsilon\,[\Gamma:\Gamma_j]\, {\rm vol}(B(z_0,2R)).
$$
\end{lemma}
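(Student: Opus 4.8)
The plan is to bound $\dim\mathcal{H}^{n,1}_{(2)}(M_j,\delta)$ by testing the fundamental estimate \eqref{eq:fundamental} on eigenforms: this forces the $L^1$-mass of the spectral kernel $|K^1_{M_j,\delta}|$ to be concentrated on the set $K_j=p_j^{-1}(\overline{B(z_0,2R)})$, which is the pullback of a \emph{fixed} compact subset of $M$; then Lemma~\ref{lm:f3} makes the kernel pointwise small on $K_j$. Since $p_j\colon M_j\to M$ is a $[\Gamma:\Gamma_j]$-sheeted Riemannian covering, ${\rm vol}(K_j)=[\Gamma:\Gamma_j]\,{\rm vol}(\overline{B(z_0,2R)})$, and this is where the index $[\Gamma:\Gamma_j]$ enters the final bound.

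First I would fix $\delta_0<\tfrac1{2C_1}$, where $C_1$ is the constant of Lemma~\ref{lm:f2}, and let $\delta\le\delta_0$. Let $\{\phi_k\}$ be an orthonormal basis of $\mathcal{H}^{n,1}_{(2)}(M_j,\delta)$ consisting of eigenforms, $\Box\phi_k=\lambda_k\phi_k$ with $0\le\lambda_k\le\delta$. Since $\phi_k\in{\rm Dom\,}\Box\subset{\rm Dom\,}\bar\partial\cap{\rm Dom\,}\bar\partial^\ast$, we have $\|\bar\partial\phi_k\|^2+\|\bar\partial^\ast\phi_k\|^2=\langle\Box\phi_k,\phi_k\rangle=\lambda_k\le\delta$, so \eqref{eq:fundamental} gives $1=\|\phi_k\|^2\le C_1\big(\delta+\int_{K_j}|\phi_k|^2\,dV\big)$, hence $\int_{K_j}|\phi_k|^2\,dV\ge\frac1{C_1}-\delta\ge\frac1{2C_1}$. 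Summing over $k$ and recalling that $|K^1_{M_j,\delta}|=\sum_k|\phi_k|^2$ and $\dim\mathcal{H}^{n,1}_{(2)}(M_j,\delta)=\int_{M_j}|K^1_{M_j,\delta}|\,dV$, I obtain
\[
\int_{K_j}|K^1_{M_j,\delta}|\,dV\ \ge\ \frac1{2C_1}\,\dim\mathcal{H}^{n,1}_{(2)}(M_j,\delta).
\]

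Next I apply Lemma~\ref{lm:f3} with $\epsilon/(2C_1)$ in place of $\epsilon$: after possibly shrinking $\delta_0$ (keeping $\delta_0<\tfrac1{2C_1}$) and enlarging $j_0$, one has $|K^1_{M_j,\delta}|(z)<\epsilon/(2C_1)$ for all $z\in K_j$, $\delta\le\delta_0$, $j\ge j_0$. Hence $\int_{K_j}|K^1_{M_j,\delta}|\,dV\le\frac{\epsilon}{2C_1}\,{\rm vol}(K_j)=\frac{\epsilon}{2C_1}[\Gamma:\Gamma_j]\,{\rm vol}(\overline{B(z_0,2R)})$. Combining with the previous display yields $\dim\mathcal{H}^{n,1}_{(2)}(M_j,\delta)\le\epsilon\,[\Gamma:\Gamma_j]\,{\rm vol}(\overline{B(z_0,2R)})$, which is the asserted inequality (identifying the volume of the closed ball with that of the open ball, as is harmless after an arbitrarily small enlargement of $R$ in the setup).

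Since Lemmas~\ref{lm:f2} and \ref{lm:f3} carry the analytic content, the remaining work is only bookkeeping; the one point to watch is that the constant $C_1$ in \eqref{eq:fundamental} must be the \emph{same} for all $j$ — which is precisely the assertion of Lemma~\ref{lm:f2}, whose constant depends only on $n$, $C_0$, and $R$ — so that $\delta_0$ can be chosen uniformly in $j$. The other point is that $K_j$ is the covering-pullback of a fixed compact set, so ${\rm vol}(K_j)$ scales exactly like $[\Gamma:\Gamma_j]$; no such clean scaling would be available if the compact set were allowed to grow with $j$.
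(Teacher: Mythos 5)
Your proof is correct and follows essentially the same route as the paper: apply the fundamental estimate of Lemma~\ref{lm:f2} to eigenforms (using $\|\bar\partial\phi_k\|^2+\|\bar\partial^*\phi_k\|^2=\lambda_k\le\delta$), sum over the orthonormal basis to bound the dimension by $\int_{K_j}|K^1_{M_j,\delta}|\,dV$, then invoke Lemma~\ref{lm:f3} together with the scaling ${\rm vol}(K_j)=[\Gamma:\Gamma_j]\,{\rm vol}(B(z_0,2R))$. Your version is merely a bit more explicit about absorbing the $\frac{C}{1-C\delta}$ factor into $\epsilon$ by fixing $\delta_0<\frac1{2C_1}$ (and the paper's ``$\int_{K_j}|K^1_{M_j,\delta}|^2\,dV$'' is a typo for the unsquared kernel, which you correctly use).
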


\begin{proof} Let $\{\phi_k\}$ be an orthonormal basis of ${\mathcal H}^{n,1}_{(2)}(M_j, \delta)$ as above.
The estimate \eqref{eq:fundamental} implies
$$
1=\|\phi_k\|^2\le C\left(\delta+\int_{K_j} |\phi_k|^2 dV\right).
$$
Summing up, we get
\begin{align*}
{\rm dim\,}{\mathcal H}^{n,1}_{(2)}(M_j, \delta)&\le \frac{C}{1-C\delta}\int_{K_j} |K^1_{M_j,\delta}|^2
dV<\frac{C \epsilon}{1-C\delta} {\rm vol}(K_j)\\ &=\frac{C\epsilon}{1-C\delta}[\Gamma:\Gamma_j]  {\rm
vol}(B(z_0,2R)).
\end{align*}
\end{proof}

Observe that every positive eigenvalue $\lambda$ of $\Box$ on $L^{n,0}_{(2)}(M_j)$ is also a
eigenvalue of $\Box$ on $L^{n,1}_{(2)}(M_j)$: If $\{f\}$ is a normalized eigenform of $\Box$ on $L^{n, 0}_{(2)}(M_j)$
associated with $\lambda$, then
$$
\square(\bar\partial f)=\lambda \bar\partial f \quad \text{and}\quad \|\bar{\partial}f\|^2_{M_j}=(\Box f, f)=\lambda.
$$
Thus $\bar{\partial}$ induces a linear injection of ${\mathcal H}^{n,0}_{(2)}(M_j,\delta)\ominus{\mathcal
H}^{n,0}_{(2)}(M_j)$ to ${\mathcal H}^{n,1}_{(2)}(M_j,\delta)$, and
\begin{equation}\label{eq:f2}
{\rm dim\,}\left({\mathcal H}^{n,0}_{(2)}(M_j,\delta)\ominus{\mathcal H}^{n,0}_{(2)}(M_j)\right) \le {\rm
dim\,} {\mathcal H}^{n,1}_{(2)}(M_j,\delta)<\infty
\end{equation}
where ${\mathcal H}^{n,0}_{(2)}(M_j,\delta)$ is the linear span of $(n,0)$ eigenforms of $\Box$, with
corresponding eigenvalues smaller than or equal to $\delta$. Let $|K_{M_j,\delta}|$ and $|K_{M_j}|$ be the
Bergman kernel functions of ${\mathcal H}^{n,0}_{(2)}(M_j,\delta)$ and ${\mathcal H}^{n,0}_{(2)}(M_j)$
respectively.

\begin{lemma}\label{lm:f5} For every $\epsilon>0$, there exists
$0<\delta_0<\frac1{C_1}$ and $j_0>0$ such that for $\delta\le \delta_0$ and $j\ge j_0$,
$$
0<|K_{M_j,\delta}|(z)-|K_{M_j}|(z)<\epsilon,\ \ \ \forall\, z\in K_j.
$$
\end{lemma}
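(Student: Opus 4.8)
The plan is to realize $|K_{M_j,\delta}|-|K_{M_j}|$ as the Bergman kernel function of the finite‑dimensional space $W_j:=\mathcal H^{n,0}_{(2)}(M_j,\delta)\ominus\mathcal H^{n,0}_{(2)}(M_j)$, whose dimension $d_j$ is small relative to $[\Gamma:\Gamma_j]$, and then to exploit that this difference is invariant under the deck group of $p_j\colon M_j\to M$. The left inequality is immediate from $\mathcal H^{n,0}_{(2)}(M_j)\subseteq\mathcal H^{n,0}_{(2)}(M_j,\delta)$. For the right one, pick an orthonormal basis $\{\psi_k\}_{k=1}^{d_j}$ of $W_j$ consisting of $\Box$‑eigenforms with eigenvalues $\mu_k\in(0,\delta]$, so that $|K_{M_j,\delta}|-|K_{M_j}|=\sum_k|\psi_k|^2$ and $\int_{M_j}\big(|K_{M_j,\delta}|-|K_{M_j}|\big)=d_j<\infty$. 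By \eqref{eq:f2} together with Lemma~\ref{lm:f4}, given any $\epsilon'>0$ there are $\delta_0\le 1$ and $j_0$ with $d_j\le\dim\mathcal H^{n,1}_{(2)}(M_j,\delta)\le\epsilon'[\Gamma:\Gamma_j]\,{\rm vol}(B(z_0,2R))$ whenever $\delta\le\delta_0$ and $j\ge j_0$. Moreover, $|K_{M_j}|$ and $|K_{M_j,\delta}|$ are both invariant under the isometric automorphisms of $M_j$ induced by $\Gamma$ --- the first because it is a biholomorphic invariant, the second because such maps preserve $\Box$, its eigenspaces, and the $L^2$ structure --- so $|K_{M_j,\delta}|-|K_{M_j}|$ descends to an $(n,n)$‑form on $M$ with $\int_M\big(|K_{M_j,\delta}|-|K_{M_j}|\big)=d_j/[\Gamma:\Gamma_j]\le\epsilon'\,{\rm vol}(B(z_0,2R))$, a bound uniform in $j$.

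It then remains to upgrade this $L^1$‑bound over a fixed compact set to the pointwise bound on $K_j$, which I would do by an interior estimate. Fix a small $r_0>0$. Since $\Gamma_j\triangleleft\Gamma$, the function $\tau_j$ is $\Gamma$‑invariant, hence descends to $M$, and by Lemma~\ref{lm:inj} one has $\tau_j\to\infty$ uniformly on $p_j(K_j)=\overline{B(z_0,2R)}$; consequently, for $j$ large and every $z\in K_j$ the geodesic ball $B(z,r_0)\subset M_j$ is a normal ball on which $p_j$ restricts to an isometric embedding, and all these balls are isometric to balls in $\widetilde M$ centered in a fixed compact set, hence have $C^\infty$ geometry bounded uniformly in $j$. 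Applying G\aa rding's inequality and the Sobolev embedding on $B(z,r_0)$ to each eigenform $\psi_k$, and using $\Box^{(m)}\psi_k=\mu_k^m\psi_k$ with $\mu_k\le\delta_0\le 1$, yields $|\psi_k(z)|^2\le C\int_{B(z,r_0)}|\psi_k|^2$ with $C$ independent of $j$, $k$, $\delta$; summing over $k$ and using the invariance and nonnegativity of $|K_{M_j,\delta}|-|K_{M_j}|$,
\[
\big(|K_{M_j,\delta}|-|K_{M_j}|\big)(z)\ \le\ C\int_{B(z,r_0)}\big(|K_{M_j,\delta}|-|K_{M_j}|\big)\ \le\ C\int_{M}\big(|K_{M_j,\delta}|-|K_{M_j}|\big)\ \le\ C\epsilon'\,{\rm vol}(B(z_0,2R)).
\]
Choosing $\epsilon'$ so that $C\epsilon'\,{\rm vol}(B(z_0,2R))<\epsilon$, and enlarging $j_0$ accordingly, finishes the proof.

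The conceptual heart --- and the step I would be most careful with --- is the observation that $|K_{M_j,\delta}|-|K_{M_j}|$ descends to $M$. On its own, Lemma~\ref{lm:f4} only bounds the total mass $d_j=O(\epsilon'[\Gamma:\Gamma_j])$, which says nothing pointwise; but the $(\Gamma/\Gamma_j)$‑invariance spreads this mass evenly over the $[\Gamma:\Gamma_j]$ sheets, so only $O(\epsilon')$ of it remains over any fixed compact set of $M$, and the interior estimate converts this into an $L^\infty$ bound. The one technical point to verify is the $j$‑independence of the elliptic constant $C$, which is guaranteed by the uniform bounded geometry of the balls $B(z,r_0)$, $z\in K_j$: they are isometric to balls in the fixed Hermitian manifold $\widetilde M$ lying over $\overline{B(z_0,2R)}$, and $\tau_j$ together with the conjugate radius stay bounded below there.
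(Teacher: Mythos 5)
Your proposal is correct and follows essentially the same argument as the paper: both realize $|K_{M_j,\delta}|-|K_{M_j}|$ as the kernel function of $\mathcal H^{n,0}_{(2)}(M_j,\delta)\ominus\mathcal H^{n,0}_{(2)}(M_j)$, bound its total mass via \eqref{eq:f2} and Lemma~\ref{lm:f4}, descend to $M$ by $\Gamma/\Gamma_j$-invariance, and upgrade the $L^1$ bound to a pointwise one by a $j$-uniform interior (G\aa rding/Sobolev) estimate on normal balls of radius fixed by the base $M$. The only cosmetic difference is that the paper justifies the uniform elliptic constant by lifting normal coordinate balls from $M$ (so $\epsilon_0$ depends only on $M$), whereas you invoke $\tau_j\to\infty$ to compare with $\widetilde M$; both give the needed bounded geometry.
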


\begin{proof} By \eqref{eq:f2} and Lemma~\ref{lm:f4}, we have for $\delta\le
\delta_0$ and $j\ge j_0$,
$$
\int_{M_j}(|K_{M_j,\delta}|-|K_{M_j}|) dV\le \epsilon\,[\Gamma:\Gamma_j]\, {\rm vol}(B(z_0,2R)).
$$
 Since the function in the integral is invariant under
$\Gamma/\Gamma_j$,
\begin{equation}\label{eq:f3}
\int_{M} (|K_{M_j,\delta}|-|K_{M_j}|) dV< \epsilon\,{\rm vol}(B(z_0,2R)).
\end{equation}
Now take a normal coordinate ball $B(z,\epsilon_0)$ around $z$ (lifted from $M$) where $\epsilon_0$ depends
only on $M$. Again it follows from G{\rm \aa}rding's inequality and Sobolev's estimates that for $m>n$,
\begin{align}
|f|^2(z)&\le C_{m,\epsilon_0} \left( \int_{B(z,\epsilon_0)} |f|^2dV+ \int_{B(z,\epsilon_0)}|\Box^{(m)}
f|^2dV\right)\notag\\ &\le C_{m,\epsilon_0}(1+\delta^{2m})\int_{M} |f|^2dV \label{eq:f4}
\end{align}
for all $f\in {\mathcal H}^{n,0}_{(2)}(M_j,\delta)\ominus{\mathcal H}^{n,0}_{(2)}(M_j)$. By \eqref{eq:f3}
and \eqref{eq:f4},
$$
|K_{M_j,\delta}|(z)-|K_{M_j}|(z)\le
 C_{m,\epsilon_0}(1+\delta^{2m})\epsilon\,{\rm
vol}(B(z_0,2R)),
$$
completing the proof. \end{proof}

\medskip

We now proceed to prove Theorem~\ref{th:g}. By Proposition~\ref{prop:upper}, it suffices to verify the lower semicontinuity of the Bergman kernels $K_{M_j}$ as $j\to\infty$.  Let $z\in\widetilde M$.  Let $R$ be sufficiently large so that $\widetilde p_1(z)\in B(z_0,R)$. Let $\delta_0$ and $j_0$ be chosen as in Lemma~\ref{lm:f5}. Let $f$ be a candidate for the extremal property \eqref{eq:extreme} of $K_{\widetilde{M}}(z)$ and let
$$
\varrho_j=\kappa(d_{\widetilde{M}}(z,\cdot)/\tau_j(z))f
$$
where $\kappa$ is the cut-off function as above.  Since $\varrho_j$ is supported in a Dirichlet fundamental domain of $M_j$, we may push down $\varrho_j$ onto $M_j$ and regard it as a $(n,0)-$form on
$M_j$ with $\|\varrho_j\|_{M_j}\le 1$. Then we have the orthonomal decomposition
$$
\varrho_j=u_j+v_j
$$
with $u_j\in {\mathcal H}^{n,0}_{(2)}(M_j,\delta_0)$ and
$$
(\Box v_j,v_j) \ge \delta_0 \|v_j\|^2.
$$
By Lemma~\ref{lm:f5}, we have
\begin{equation}\label{eq:f5}
|u_j|^2(z) \le |K_{M_j,\delta_0}|(z)\|u_j\|^2\le |K_{M_j,\delta_0}|(z) < |K_{M_j}|(z)+\epsilon
\end{equation}
for $j\ge j_0$, whilst
\begin{equation}\label{eq:f6}
\|v_j\|^2\le \delta_0^{-1}(\Box v_j, v_j)=\delta_0^{-1}\|\bar{\partial}v_j\|^2.
\end{equation}
Since $\Box u_j\in {\mathcal H}^{n,0}_{(2)}(M_j,\delta_0)$, we have
$$
(\bar{\partial}u_j, \bar{\partial}v_j)=(\Box u_j,v_j)=0.
$$
Hence
\begin{equation}\label{eq:f7}
\|\bar{\partial}v_j\|^2\le \|\bar{\partial}\varrho_j\|^2\le C\sup |\kappa'|^2 \tau_j^{-2}(z).
\end{equation}
By \eqref{eq:f6} and \eqref{eq:f7},
\begin{equation}\label{eq:f8}
\|v_j\|^2\le C\frac{\sup|\kappa'|^2}{\delta_0} \tau_j^{-2}(z).
\end{equation}
In order to obtain a pointwise estimate of $v_j$, we fix a coordinate unit ball ${\mathbb B}^n$ in $\widetilde M$, centered at $z$ and lifted from $M$.  It follows from G{\rm \aa}rding's inequality together with Sobolev's estimates that for
$m>n$,
\begin{eqnarray*}
|\bar{\partial} u_j|^2(z) & \le & C_{m} \left( \int_{{\mathbb B}^n} |\bar{\partial}u_j|^2dV+ \int_{{\mathbb
B}^n}|\Box^{(m)}(\bar{\partial}u_j)|^2dV\right)\\ &\le &  C_{m}(1+\delta_0^{2m})\|\bar{\partial}u_j\|^2\le
C_{m}(1+\delta_0^{2m}) \|\bar{\partial}\varrho_j\|^2
\end{eqnarray*}
for $z\in {\mathbb B}^n_{1/2}$. (Here we identify $u_j$ and $v_j$ with their pull-backs from $M_j$ onto $\widetilde M$.) Since $\varrho_j$ is holomorphic in ${\mathbb B}^n$ for large $j$, we
conclude that
\begin{equation}\label{eq:f9}
|\bar{\partial} v_j|^2=|\bar{\partial} u_j|^2\rightarrow 0
\end{equation}
 uniformly on ${\mathbb B}^n_{1/2}$ as $j\rightarrow \infty$.
 Let
$ K_{\rm BM} $ be the Bochner-Martinelli kernel. Then
$$
v_j(0)= \int_{{\mathbb B}^n} \bar{\partial}(\kappa(2|z|)v_j)\cdot K_{\rm BM}= \int_{{\mathbb B}^n} v_j
\bar{\partial} \kappa(2|z|)\cdot K_{\rm BM}+ \int_{{\mathbb B}^n} \kappa(2|z|) \bar{\partial} v_j \cdot
K_{\rm BM}
$$
converges to $0$ by \eqref{eq:f8} and \eqref{eq:f9}, since $K_{\rm BM}$ is $L^1$ on ${\mathbb B}^n$.
Combing this fact with \eqref{eq:f5}, we conclude that
$$
|K_{\widetilde{M}}|(z)\le {\lim\inf}_{j\rightarrow \infty} |K_{M_j}|(z).
$$
This conclude the proof of Theorem~\ref{th:g}.

\begin{corollary}\label{co:hyp}  Any tower of coverings on a hyperconvex complex manifold is Bergman stable.
\end{corollary}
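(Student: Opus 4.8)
The plan is to deduce this corollary directly from Theorem~\ref{th:g} by producing, on a hyperconvex base manifold $M$, a complete K\"ahler metric together with a global potential that makes conditions (1) and (2) automatic. The first observation is that the space of square-integrable holomorphic $(n,0)$-forms and its Bergman kernel are defined without reference to any Hermitian metric (see \eqref{eq:l2}); hence Bergman stability of a tower is independent of the choice of K\"ahler metric, and one is free to equip $M$ with whatever complete K\"ahler metric is convenient. So I would fix a $C^{2}$ strictly plurisubharmonic proper map $\rho\colon M\to[-1,0)$, set
\[
\psi=-\log(-\rho),\qquad \omega=\partial\bar\partial\psi ,
\]
and let $\pi\colon\widetilde M\to M$ denote the covering map, $\widetilde\psi=\pi^{*}\psi$, $\widetilde\omega=\pi^{*}\omega$.

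The one computation to carry out is the elementary identity
\[
\partial\bar\partial\psi=\frac{\partial\bar\partial\rho}{-\rho}+\partial\psi\wedge\bar\partial\psi .
\]
Since $\rho$ is $C^{2}$, strictly plurisubharmonic, and $-\rho>0$, the first term on the right is strictly positive, so $\psi$ is strictly plurisubharmonic and $\omega$ is a genuine K\"ahler form on $M$; in particular the inequalities $C^{-1}\omega\le\partial\bar\partial\psi\le C\omega$ hold with $C=1$. The same identity gives $\partial\bar\partial\psi\ge\partial\psi\wedge\bar\partial\psi$, so condition (1) of Theorem~\ref{th:g} holds with $K=\emptyset$ and $C=1$. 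Pulling everything back by the local biholomorphism $\pi$, the function $\widetilde\psi$ is $C^{2}$ plurisubharmonic on all of $\widetilde M$, $\widetilde\omega=\partial\bar\partial\widetilde\psi$, and $\partial\bar\partial\widetilde\psi\ge\partial\widetilde\psi\wedge\bar\partial\widetilde\psi$, which is condition (2).

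The remaining point is completeness of $(M,\omega)$ and $(\widetilde M,\widetilde\omega)$. The inequality $\partial\bar\partial\psi\ge\partial\psi\wedge\bar\partial\psi$ says precisely that the $\omega$-gradient of $\psi$ has length at most $1$, whence $|\psi(p)-\psi(q)|\le d_{\omega}(p,q)$ for all $p,q\in M$; and since $\rho$ is proper with values in $[-1,0)$, the function $\psi\colon M\to[0,\infty)$ is an exhaustion, so $\psi\to\infty$ along any divergent path and every such path has infinite $\omega$-length. Thus $\omega$ is complete, and $\widetilde\omega$ is complete because a Riemannian covering of a complete manifold is complete. Theorem~\ref{th:g} then applies and yields Bergman stability of the tower. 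There is no real obstacle here: the corollary is essentially a direct specialization of Theorem~\ref{th:g}, and the only mild subtleties are the sign bookkeeping in the $\partial\bar\partial$ identity and the ``self-bounded gradient plus exhaustion implies completeness'' observation.
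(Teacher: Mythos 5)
Your proposal is correct and follows exactly the paper's route: take $\psi=-\log(-\rho)$, set $\omega=\partial\bar\partial\psi$, pull back by the covering map, and invoke Theorem~\ref{th:g}. You merely spell out the details the paper leaves implicit—the identity $\partial\bar\partial\psi=\dfrac{\partial\bar\partial\rho}{-\rho}+\partial\psi\wedge\bar\partial\psi$, the resulting self-bounded-gradient inequality, and the standard ``bounded gradient of an exhaustion implies completeness'' argument—all of which are accurate.
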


\begin{proof} Let $M_j=\widetilde M/\Gamma_j$ be a tower of covering on a hyperconvex complex manifold $M$. Let
$\rho\colon M\to [-1, \ 0)$ be a smooth strictly plurisubharmonic proper map. Put
$\psi=-\log(-\rho)$, $\omega=\partial\bar\partial\psi$, $\widetilde\psi=\widetilde{p}^*(\psi)$, and
$\widetilde\omega=\widetilde{p}^*(\omega)$, where $\widetilde{p}\colon \widetilde{M}\to M$ is the natural
projection. Then the assumptions in Theorem~\ref{th:g} are satisfied.
\end{proof}

The following theorem is a simple variation of Theorem~\ref{th:g}:

\begin{theorem}\label{th:g1} A tower of coverings $M_j$ on a complete Hermitian manifold $(M,\omega)$
is Bergman stable if the following conditions hold:
\begin{enumerate}
\item There exist a compact set $K\subset M$, a $C^\infty-$smooth plurisubharmonic function on $M\backslash K$, and a constant $C>0$ such that $\omega=\partial\bar{\partial}\psi$ and $\partial\bar{\partial}\psi\ge C^{-1}\partial\psi\wedge \bar{\partial}\psi$ on $M\backslash K$.
\item There exist a $C^\infty$-smooth plurisubharmonc function $\tilde{\psi}$ on the top manifold $\widetilde{M}$ and a constant $\widetilde C>0$ such that $\partial\bar{\partial}\tilde{\psi}\ge \widetilde{C}^{-1}\tilde{\omega}$ and $\partial\bar{\partial}\tilde{\psi}\ge \widetilde{C}^{-1}\partial\tilde{\psi}\wedge \bar{\partial}\tilde{\psi}$, where $\widetilde{\omega}$ is the lift of $\omega$ to $\widetilde{M}$.
\end{enumerate}
\end{theorem}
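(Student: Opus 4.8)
The plan is to run, \emph{mutatis mutandis}, the proof of Theorem~\ref{th:g}. In that proof the K\"ahler hypothesis and the two-sided bounds $C^{-1}\omega\le\partial\bar\partial\psi\le C\omega$, $C^{-1}\widetilde\omega\le\partial\bar\partial\widetilde\psi\le C\widetilde\omega$ enter in exactly two ways: first, to validate the Bochner--Kodaira--Nakano identity behind the Donnelly--Fefferman estimate of Lemma~\ref{lm:f2} on the region carrying the test forms; second, to bound the weight-gradient lengths $|\bar\partial\psi|^2_\omega$ and $|\bar\partial\widetilde\psi|^2_{\widetilde\omega}$, used to absorb the term produced by the exponential-weight substitution. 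Under the hypotheses of Theorem~\ref{th:g1} I would first observe that $\omega=\partial\bar\partial\psi$ on $M\setminus K$ is automatically K\"ahler there ($\partial\bar\partial\psi$ is $d$-closed), and likewise $\widetilde\omega$ is K\"ahler outside the preimage of $K$. Since the estimate of Lemma~\ref{lm:f2} is applied only to forms supported off $p_j^{-1}(B(z_0,R))\supset p_j^{-1}(K)$, the torsion terms in BKN vanish on the support, so Lemma~\ref{lm:f2}, and hence the uniform gap $\sigma_e(\square_{n,1}^{M_j})\subset[1/C_1,\infty)$, goes through verbatim; and the equality $\partial\bar\partial\psi=\omega$ turns the self-bounded-gradient inequality $\partial\bar\partial\psi\ge C^{-1}\partial\psi\wedge\bar\partial\psi$ directly into $|\bar\partial\psi|^2_\omega\le C$, so no upper bound on $\partial\bar\partial\psi$ is needed.

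The substantive change concerns the top manifold. In the proof of Theorem~\ref{th:g}, condition~(2) is used only in Lemma~\ref{lm:f3}, to secure the \emph{global} bound $\|\widetilde\rho\|^2_{\widetilde M}\le C(\|\bar\partial\widetilde\rho\|^2+\|\bar\partial^*\widetilde\rho\|^2)$ for compactly supported $(n,1)$-forms, i.e. $\sigma(\square_{n,1}^{\widetilde M})\subset[c,\infty)$ with \emph{no} compact correction. I would re-derive this from the new condition~(2) by the same weighted BKN computation with weight $e^{-\tau\widetilde\psi}$: the positivity $\partial\bar\partial\widetilde\psi\ge\widetilde C^{-1}\widetilde\omega$ supplies the coercive term $\sim\tau\widetilde C^{-1}\|\cdot\|^2$ for $(n,1)$-forms, while the self-bounded gradient $\partial\bar\partial\widetilde\psi\ge\widetilde C^{-1}\partial\widetilde\psi\wedge\bar\partial\widetilde\psi$ is used to absorb the contraction term $\|\bar\partial\widetilde\psi\lrcorner w\|^2$ coming from the substitution directly into that coercive term --- this is the classical Donnelly--Fefferman mechanism, and it is what replaces the now-unavailable bound on $|\bar\partial\widetilde\psi|^2_{\widetilde\omega}$. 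The non-K\"ahler part of $\widetilde\omega$ is supported on a compact set and hence has bounded torsion; its BKN contribution is dominated by arranging the parameters so that the coercive term, present on all of $\widetilde M$, beats it. Alternatively, since Bergman stability does not depend on the choice of metric, one may carry out this step with the genuinely K\"ahler metric $\partial\bar\partial\widetilde\psi+\widetilde\omega$, which is complete, K\"ahler outside the preimage of $K$, dominates $\partial\bar\partial\widetilde\psi$, and relative to which $|\bar\partial\widetilde\psi|^2\le\widetilde C$ while $\partial\bar\partial\widetilde\psi$ stays uniformly positive; or else the gap may simply be quoted from the authors' earlier work \cite{ChenFu11}.

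With the two gaps in hand the rest is literal transcription of the Theorem~\ref{th:g} proof: form $\mathcal H^{n,1}_{(2)}(M_j,\delta)$; prove the pointwise bound (Lemma~\ref{lm:f3}), the dimension bound (Lemma~\ref{lm:f4}) via \eqref{eq:fundamental}, and the comparison $0<|K_{M_j,\delta}|-|K_{M_j}|<\epsilon$ on $K_j$ (Lemma~\ref{lm:f5}), using the $\bar\partial$-injection of small $(n,0)$-eigenforms into $(n,1)$-eigenforms; then, for $z\in\widetilde M$, take an extremal $f$ for $K_{\widetilde M}(z)$, cut it off to $\varrho_j$, push down to $M_j$, split $\varrho_j=u_j+v_j$ into small- and large-eigenvalue parts, bound $|u_j|^2(z)\le|K_{M_j,\delta_0}|(z)<|K_{M_j}|(z)+\epsilon$, and send $v_j(z)\to0$ via the Bochner--Martinelli representation together with \eqref{eq:f8}--\eqref{eq:f9}. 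This yields $|K_{\widetilde M}|(z)\le\liminf_{j\to\infty}|K_{M_j}|(z)$, which with Proposition~\ref{prop:upper} gives Bergman stability. None of these steps uses K\"ahler-ness or two-sided bounds; they rely only on interior elliptic (G\aa rding and Sobolev) estimates, the Andreotti--Vesentini density theorem, and Lemma~\ref{lm:inj}, all available on complete Hermitian manifolds.

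The step I expect to be the main obstacle is precisely the top-manifold estimate: producing a gap for the \emph{full} spectrum of $\square_{n,1}^{\widetilde M}$ --- not merely the essential spectrum --- from the one-sided hypothesis $\partial\bar\partial\widetilde\psi\ge\widetilde C^{-1}\widetilde\omega$ when $\widetilde\omega$ is only K\"ahler off a compact set. The careful bookkeeping of the weight-gradient and torsion terms there, and (if one changes metrics for this step) the check that the change does not spoil the $M_j$-to-$\widetilde M$ norm comparison used in Lemma~\ref{lm:f3}, is the delicate point; everything else is routine.
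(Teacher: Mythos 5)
Your overall architecture---reduce to the proof of Theorem~\ref{th:g}, observe that $\omega=\partial\bar\partial\psi$ makes $M\setminus K$ K\"ahler so Lemma~\ref{lm:f2} carries over unchanged, and isolate the spectral gap on $\widetilde M$ as the substantive obstacle---matches the paper exactly, and you correctly flag the delicate point. However, both of your proposed resolutions of that point contain errors that would sink the argument as written. First, the non-K\"ahler locus of $\widetilde\omega=\widetilde p^*\omega$ is $\widetilde p^{-1}(K)$, which is $\Gamma$-invariant and hence \emph{not} compact whenever $\Gamma$ is infinite; its torsion is bounded pointwise (being locally isometric to a compact piece of $M$), but it is not ``supported on a compact set,'' and since the test forms $\widetilde\rho$ in Lemma~\ref{lm:f3} are centered at points of $K_j$ their supports genuinely overlap $\widetilde p^{-1}(K)$, so the negative BKN torsion term cannot be discarded on support grounds nor absorbed by a parameter choice independent of $j$. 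Second, and for the same reason, the metric $\partial\bar\partial\widetilde\psi+\widetilde\omega$ is \emph{not} K\"ahler: the summand $\widetilde\omega$ fails to be $d$-closed precisely on $\widetilde p^{-1}(K)$, so this change of metric does not eliminate the torsion you are trying to avoid.

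The paper's actual fix is the cleaner form of your second alternative: drop the $\widetilde\omega$ summand and equip $\widetilde M$ with the metric $\partial\bar\partial\widetilde\psi$ alone. This is globally $d$-closed, hence genuinely K\"ahler everywhere, so the Donnelly--Fefferman estimate with weight $\widetilde\psi$ and the self-bounded gradient $\partial\bar\partial\widetilde\psi\ge\widetilde C^{-1}\partial\widetilde\psi\wedge\bar\partial\widetilde\psi$ yields a global spectral gap for $(n,1)$-forms with no torsion correction at all:
\[
\|\widetilde\rho\|^2_{\widetilde M,\partial\bar\partial\widetilde\psi}\le C\left(\|\bar\partial\widetilde\rho\|^2_{\widetilde M,\partial\bar\partial\widetilde\psi}+\|\bar\partial^*\widetilde\rho\|^2_{\widetilde M,\partial\bar\partial\widetilde\psi}\right).
\]
The $M_j$-to-$\widetilde M$ norm comparison you identify as the remaining hazard is handled by the one new ingredient in the paper's modification, the monotonicity of $L^2$-norms of $(n,q)$-forms under comparison of metrics: since $\partial\bar\partial\widetilde\psi\ge\widetilde C^{-1}\widetilde\omega$, the pointwise norm-density $|u|^2\,dV$ of an $(n,q)$-form with $q\ge 1$ computed in $\partial\bar\partial\widetilde\psi$ is dominated (up to the constant $\widetilde C^q$) by the same quantity computed in $\widetilde\omega$, which in turn equals the push-down's density on $M_j$. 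This monotonicity is exactly why the \emph{pure} metric $\partial\bar\partial\widetilde\psi$, and not $\partial\bar\partial\widetilde\psi+\widetilde\omega$, is the right auxiliary metric here.
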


We indicate how the proof of Theorem~\ref{th:g} can be easily modified to obtain a proof of Theorem~\ref{th:g1}. Notice that $M$ is K\"{a}hler on $M\setminus K$.  Clearly, the conclusion of Lemma~\ref{lm:f2} is valid under condition $(1)$ above. The proof of Lemma~\ref{lm:f3} can be modified as follows: Since for each $(n,q)-$form on $\widetilde{M}$, the $L^2-$norm with respect to $\partial\bar{\partial}\tilde{\psi}$ is always dominated by the $L^2-$norm with respect to the metric $\tilde{\omega}$. Thus
\begin{align*}
\|\tilde{\rho}\|^2_{\widetilde{M},\partial\bar{\partial}\tilde{\psi}}&\le C\left(\|\bar{\partial}\tilde{\rho}\|^2_{\widetilde{M},\partial\bar{\partial}\tilde{\psi}}+
\|\bar{\partial}^\ast\tilde{\rho}\|^2_{\widetilde{M},\partial\bar{\partial}\tilde{\psi}}\right)\\
&\le C\left(\|\bar{\partial}{\rho}\|^2_{M_j}+\|\bar{\partial}^\ast{\rho}\|^2_{M_j}\right)\le C\left(\frac{\sup|\kappa'|^2}{\tau_j^2(z)}+\delta\right).
\end{align*}
The other parts of the arguments remains unchanged after replacing $\|\tilde{\rho}\|_{\widetilde{M}}$ by $\|\tilde{\rho}\|_{\widetilde{M},\partial\bar{\partial}\tilde{\psi}}$.

We are now in position to prove Theorem~\ref{th:r}.  Let $M$ be a Riemann surface. The case is trivial if the universal covering of $M$ is ${\mathbb P}^1$ since $M$ is also ${\mathbb P}^1$ in this case (\cite[Theorem~IV.6.3, p.193]{FarkasKra80}). If the universal covering of $M$ is ${\mathbb C}$, then $M$ and its normal coverings are conformally equivalent to ${\mathbb C}$, the punctured complex plane ${\mathbb C}^\ast$, or a torus (\cite[Theorem~IV.6.4, p.193]{FarkasKra80}). Thus the Bergman kernels of normal coverings of $M$ vanish identically when $M$ is non-compact and Bergman stability is established in this case. In case when $M$ is a torus,  each covering $M_j$ is the tower is also a torus except the universal covering ${\mathbb C}$. Since holomorphic 1-forms on a torus have the form of $c dz$ where $c$ is a complex number, we have $|K_{M_j}|=1/{\rm vol\,}M_j$, which tends to zero as $j\rightarrow \infty$. Thus we have Bergman stability in this case. It suffices to deal with the case when the universal covering of $M$ is the unit disc ${\mathbb D}$. According to Rhodes' theorem and Theorem~\ref{th:hyp}, it suffices to consider the case when $M$ is parabolic. By a theorem of Nakai~\cite{Nakai62},
there exists a harmonic function $u$ outside a compact subset such that $u(z)\rightarrow +\infty$ as $z$ tends to the ideal boundary $\partial M$ of $M$.  Let $K$ be a compact subset of $M$ such that $u>1$ on $M\backslash K$. By Richberg's theorem, there is a $C^\infty-$smooth strictly subharmonic function $\phi$ on $M\backslash K$ such that $|\phi-(-u)|\le 1$ holds on $M\backslash K$.  Let $0\le \chi\le 1$ be a function in $C^\infty_0(M)$ such that $\chi=1$ on a neighborhood of $K$, and let $0\le \kappa\le 1$ be a function in $C^\infty_0(M)$ such that $\kappa=1$ in a neighborhood of ${\rm supp\,}\chi$.  Let $\omega_{0}$ be a K\"ahler metric on $M$. Put
$$
\omega=C_1\kappa\,\omega_{0}+\partial\bar{\partial}\left((1-\chi)(-\log(-\phi))\right)
$$
where $C_1>0$ is a constant.
We see that $\omega$ is a complete Hermitian metric on $M$ satisfying condition $(1)$ in Theorem~\ref{th:g1} provided $C_1$ is sufficiently large (we may take $\psi=-\log(-\phi)$). Let $\tilde{p}:\D\rightarrow M$ be the natural projection. We define
$$
\widetilde{\psi}(z)=C_2\left(-\log(1-|z|^2)\right)+\tilde{p}^\ast\left((1-\chi)(-\log(-\phi))\right)
$$
where $C_2>0$ is a constant. Since $\partial\bar{\partial}(-\log(1-|z|^2))$ descends to a complete K\"ahler metric on $M$, $\partial\bar{\partial}\tilde{\psi}$ dominates $\tilde{\omega}$ provided $C_2$ sufficiently large. It is also easy to verify $\partial\bar{\partial}\tilde{\psi}\ge C^{-1}\partial \tilde{\psi}\wedge \bar{\partial}\tilde{\psi}$. Applying Theorem~\ref{th:g1}, we then conclude the proof of Theorem~\ref{th:r}.

\section{Appendix: Applications}\label{sec:app}

In this appendix, we provide some applications of Theorem~\ref{th:g} on quotients of polydiscs and balls.

Let ${\mathbb{H}}^n$ be the $n$-product of the upper half planes $\mathbb{H}$ with the Bergman metric. The
connected component $G$ of the identity of the group of all holomorphic automorphisms of ${\mathbb{H}}^n$
contains of all the transformations of the form $\sigma=(\sigma^{(1)},\cdots,\sigma^{(n)})$,
$\sigma^{(i)}\in PSL(2,{\mathbb R})$. An element $\sigma$ of $G$ is {\it parabolic} if each $\sigma^{(i)}$
has exactly a fixed point on $\overline{\mathbb{R}}$. Let $\Gamma$ be a Hilbert modular group. (We refer
the reader to \cite{Tamagawa59, Shimizu63} for the relevant material.) Then $\Gamma$ has a fundamental domain
$F$ of the form
$$
F=F_0\cup V_1\cup\cdots\cup V_t
$$
where $F_0$ is relatively compact in ${\mathbb{H}}^n$, the $V_j$'s are disjoint, each $V_j\subset
\sigma_j^{-1}(U_j)$ is the fundamental domain of the group $\Gamma_j$ of all $\gamma\in \Gamma$ fixing some
point $x_j$ in $\overline{\mathbb{R}}^n$, and
$$
U_j=\{z\in \mathbb{C}^n:{\rm Im\,}z_{1}\times\cdots\times{\rm Im\,}z_{n}>d_j\}
$$
with $d_j$ being a suitably chosen positive number, $\sigma_j$ an element in $G$ such that
$\sigma_j(x_j)=\infty$ (see \cite[p.~48]{Shimizu63}). Since each nontrivial element of $\sigma_j\Gamma_j
\sigma_j^{-1}$ fixes exactly one point $\infty$, it has to be a translation.  Let $\D^n$ be the unit polydisc in ${\mathbb C}^n$ and let $\Gamma$ be a Hilbert modular group.

\begin{proposition} Any tower of coverings $M_j=\D^n/\Gamma_j$ on $M=\D^n/\Gamma$ is
Bergman stable.
\end{proposition}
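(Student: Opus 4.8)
The plan is to deduce the proposition from Theorem~\ref{th:g}, applied to $M=\D^n/\Gamma$ (and to the members of the tower) equipped with the push-down $\omega$ of the product Poincar\'e metric of $\D^n$, and to the top manifold $\widetilde M=\D^n$ with $\widetilde\omega$ the product Poincar\'e metric. This metric is complete and K\"ahler on every member of the tower, so no metric-patching of the kind used in the proof of Theorem~\ref{th:r} is needed; the whole task is to verify the two potential-theoretic conditions of Theorem~\ref{th:g}.

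For condition $(1)$, first I would invoke the description of the fundamental domain $F=F_0\cup V_1\cup\cdots\cup V_t$ recalled above to fix a compact set $K\subset M$ for which $M\setminus K$ is a disjoint union of cusp neighborhoods. After conjugating by the appropriate $\sigma_j\in G$ and passing to the upper half-plane model $\mathbb{H}^n$, each such neighborhood is a quotient of $U_j=\{z:{\rm Im}\,z_1\cdots{\rm Im}\,z_n>d_j\}$ by a group of automorphisms whose elements all have the form $z\mapsto\epsilon z+\mu$ with $\mu\in\R^n$ and $\epsilon$ a totally positive unit, so that $\prod_i\epsilon_i=1$. On each $U_j$ I set
\[
\psi=-\sum_{i=1}^n\log({\rm Im}\,z_i).
\]
Since ${\rm Im}(\epsilon_iz_i+\mu_i)=\epsilon_i\,{\rm Im}\,z_i$ and $\sum_i\log\epsilon_i=\log\prod_i\epsilon_i=0$, the function $\psi$ is invariant under all these maps and so descends to the cusp neighborhood. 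A direct computation gives $\partial\bar\partial\psi=\sum_i\frac{dz_i\wedge d\bar z_i}{4({\rm Im}\,z_i)^2}$, a fixed positive constant multiple of $\omega$ on $M\setminus K$, so $C^{-1}\omega\le\partial\bar\partial\psi\le C\omega$ there. Finally, with respect to the metric $\partial\bar\partial\psi$ the covector $dz_i$ has pointwise length $2\,{\rm Im}\,z_i$, whence $|\partial\psi|^2_{\partial\bar\partial\psi}=\sum_i\frac{1}{4({\rm Im}\,z_i)^2}(2\,{\rm Im}\,z_i)^2=n$, and the pointwise Cauchy--Schwarz inequality $\partial\psi\wedge\bar\partial\psi\le|\partial\psi|^2_{\partial\bar\partial\psi}\,\partial\bar\partial\psi$ gives $\partial\bar\partial\psi\ge\frac1n\,\partial\psi\wedge\bar\partial\psi$ on $M\setminus K$. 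This is exactly condition $(1)$, and is the same device used in Corollary~\ref{co:hyp}.

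For condition $(2)$, on $\widetilde M=\D^n$ I would take the global $C^\infty$ strictly plurisubharmonic function $\widetilde\psi=-\sum_{i=1}^n\log(1-|w_i|^2)$, whose complex Hessian is a constant multiple of $\widetilde\omega$; hence $C^{-1}\widetilde\omega\le\partial\bar\partial\widetilde\psi\le C\widetilde\omega$, and the same norm computation as above (now $|\partial\widetilde\psi|^2_{\partial\bar\partial\widetilde\psi}=\sum_i|w_i|^2\le n$) yields $\partial\bar\partial\widetilde\psi\ge\frac1n\,\partial\widetilde\psi\wedge\bar\partial\widetilde\psi$. With both conditions in hand, Theorem~\ref{th:g} applies and gives Bergman stability of the tower.

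I expect the only genuinely delicate point to be the first step: reading off the ends of the Hilbert modular variety from the structure of the fundamental domain and checking that the single explicit potential $\psi=-\sum_i\log({\rm Im}\,z_i)$ is invariant under the full cusp stabilizer. The unit part of the stabilizer causes no difficulty precisely because a totally positive unit has norm one; everything else reduces to the elementary Hessian computation for $-\log({\rm Im}\,z)$.
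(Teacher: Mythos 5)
Your proposal is correct and follows essentially the same route as the paper's proof: both pass to $\mathbb{H}^n$ and take the potential $\psi$ proportional to $-\sum_i\log(\operatorname{Im}z_i)$ (the paper writes it as $\log K_{\mathbb{H}^n}$) on each cusp end, verifying the Donnelly--Fefferman-type inequality by the same elementary Hessian computation, with $\widetilde\psi=-\sum_i\log(1-|w_i|^2)$ on $\D^n$ handling condition~(2). You are in fact somewhat more careful than the paper about the cusp stabilizer: you explicitly account for the unit part $z\mapsto\epsilon z+\mu$ with $\prod_i\epsilon_i=1$, whereas the paper asserts that nontrivial stabilizer elements are translations; the invariance of the potential holds under the full stabilizer in either case because the norm of a totally positive unit is $1$.
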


\begin{proof} Since $\D^n$ is biholomorphic to ${\mathbb{H}}^n$, it suffices to prove the proposition for
${\mathbb{H}}^n$.  The Bergman kernel
$$
K_{\mathbb{H}^n}(z)=\frac1{(4\pi)^n}\frac1{({\rm Im\,}z_1\times\cdots\times{\rm Im\,}z_n)^2}
$$
of $\mathbb{H}^n$ is invariant under translations (in particular $\sigma_j\Gamma_j
\sigma_j^{-1}-$invariant) and a direct computation shows $\partial\bar{\partial}\log
K_{{\mathbb{H}^n}}\gtrsim \partial\log K_{{\mathbb{H}^n}}\wedge\bar{\partial}\log K_{{\mathbb{H}^n}}$. By
setting $\psi=\sigma_j^\ast\log K_{\mathbb{H}^n}$ on each parabolic end $V_j/\Gamma_j$, we see that
conditions in Theorem~\ref{th:g} are satisfied.  \end{proof}

\begin{proposition}\label{prop:ball} An tower of coverings on a complete K\"ahler manifold with pinched
negative sectional curvature and finite volume is Bergman stable.
\end{proposition}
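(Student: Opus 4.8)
The plan is to verify the two hypotheses of Theorem~\ref{th:g} by invoking the structure theory of complete, finite-volume K\"ahler manifolds of pinched negative curvature; no modification of the given metric will be needed. Fix constants $\Lambda\ge\lambda>0$ with $-\Lambda^2\le K_M\le-\lambda^2<0$ for the sectional curvature of $(M,\omega)$; then the universal covering $\widetilde M$ with the lifted metric $\widetilde\omega=\widetilde p^{\,*}\omega$ is a Hadamard manifold with the same pinching, and since $\widetilde\omega$ is the lift of $\omega$ it suffices to produce plurisubharmonic potentials $\psi$ on $M\setminus K$ and $\widetilde\psi$ on $\widetilde M$ with the stated $\partial\bar\partial$ bounds.

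First I would invoke the thick--thin decomposition (the Margulis lemma): since $M$ has finite volume and pinched negative curvature, there is a compact set $K\subset M$ with $M\setminus K$ a finite disjoint union of cusp ends $E_1,\dots,E_m$, where each $E_i=\mathcal{B}_i/\Gamma_{\xi_i}$, $\xi_i$ being a parabolic point in the boundary at infinity $\partial_\infty\widetilde M$, $\mathcal{B}_i$ a horoball centered at $\xi_i$, and $\Gamma_{\xi_i}\subset\Gamma$ its stabilizer. Both the finiteness of the number of ends and the horoball description of each end are standard consequences of finiteness of volume and of the pinching.

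Next I would take Busemann functions as potentials. For $\xi\in\partial_\infty\widetilde M$ let $\beta_\xi$ be the associated Busemann function; it is invariant under the stabilizer of $\xi$, so $\beta_{\xi_i}$ descends to $E_i$. Set $\psi:=\beta_{\xi_i}$ on $E_i$ (so $\psi$ is defined on $M\setminus K$) and $\widetilde\psi:=\beta_\xi$ for one fixed $\xi$. The key point is the Hessian comparison for horospheres in pinched negative curvature (Riccati comparison, Heintze--Im Hof): for $\beta=\beta_\xi$ one has $\lambda\,(g-d\beta\otimes d\beta)\le\operatorname{Hess}\beta\le\Lambda\,(g-d\beta\otimes d\beta)$. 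On a K\"ahler manifold the $(1,1)$-form $\partial\bar\partial\beta$ is the $J$-invariant part of the real Hessian, so its positivity in a direction $V$ is governed by $\operatorname{Hess}\beta(V,V)+\operatorname{Hess}\beta(JV,JV)$; since $J\nabla\beta$ is orthogonal to $\nabla\beta$ and hence tangent to the horospheres, the radial degeneracy of $\operatorname{Hess}\beta$ is filled in, and one obtains $C^{-1}\omega\le\partial\bar\partial\beta\le C\omega$ with $C=C(\lambda,\Lambda)$, uniformly on $\widetilde M$ and on every cusp; in particular $\beta$ is strictly plurisubharmonic. Finally, since $|\nabla\beta|\equiv1$, the rank-one form $\partial\beta\wedge\bar\partial\beta$ is bounded above by $C'\omega\le C'C\,\partial\bar\partial\beta$ for a universal constant $C'$. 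Thus $\widetilde\psi$ meets condition~(2) of Theorem~\ref{th:g} on $\widetilde M$, $\psi$ meets condition~(1) on $M\setminus K$, and the theorem yields Bergman stability. (Theorem~\ref{th:g1} could be used in place of Theorem~\ref{th:g}.)

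I expect the main obstacle to be exactly the estimate of the preceding paragraph: one must combine the comparison geometry of horospheres with the K\"ahler identity relating the real Hessian to the Levi form, and verify that the two-sided bound $C^{-1}\omega\le\partial\bar\partial\beta\le C\omega$ is uniform all the way into the cusps --- which is precisely where both pinching bounds enter --- as well as that $\beta_\xi$ is regular enough ($C^2$ suffices, and this holds in pinched negative curvature; otherwise one first regularizes $\beta_\xi$ while keeping it strictly plurisubharmonic). The reduction to this single cusp model via the thick--thin decomposition is routine but must be carried out.
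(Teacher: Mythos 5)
Your proposal is correct and follows essentially the same route as the paper: pick the Busemann function as the potential, use the thick--thin decomposition to reduce the base to a compact piece plus finitely many cusp ends each carrying a descended Busemann function, and verify the two hypotheses of Theorem~\ref{th:g}. The only difference is one of detail: where the paper simply attributes the estimates $C^{-1}\widetilde\omega\le\partial\bar\partial\psi\le C\widetilde\omega$ and $\partial\bar\partial\psi\gtrsim\partial\psi\wedge\bar\partial\psi$ to Siu--Yau, you re-derive them via the Riccati/Heintze--Im Hof comparison $\lambda(g-d\beta\otimes d\beta)\le\operatorname{Hess}\beta\le\Lambda(g-d\beta\otimes d\beta)$ together with the observation that the Levi form in the complex direction spanned by $V,JV$ sees $\operatorname{Hess}\beta(V,V)+\operatorname{Hess}\beta(JV,JV)$, which repairs the radial degeneracy because $J\nabla\beta$ is horospherical; and the gradient bound $|\nabla\beta|\equiv1$ gives $\partial\beta\wedge\bar\partial\beta\lesssim\omega\lesssim\partial\bar\partial\beta$. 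That unpacking is accurate and fills in what the paper leaves to a citation, but it is not a different proof strategy.
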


\begin{proof}  Let $(M,\omega)$ be a  complete K\"ahler manifold with pinched negative sectional curvature
and finite volume. Let $(\widetilde{M}, \widetilde\omega)$ be its universal covering. According to Siu-Yau \cite{SiuYau82},
each Buseman function $\psi$ on $\widetilde{M}$ satisfies $C^{-1}\widetilde\omega \le \partial\bar{\partial}\psi\le
C\widetilde\omega$ and $\partial\bar{\partial}\psi\ge C^{-1}\partial\psi\wedge \bar{\partial}\psi$. Furthermore, $M$
is the union of a compact set and a finite number of cusp ends such that each end admits a function which
is a push-down of some Busemann function on $\widetilde{M}$. Thus the conditions of Theorem~\ref{th:g} are satisfied.
\end{proof}

\begin{remark} Proposition~\ref{prop:ball} also holds when $M$ is a ball quotient that is geometrically finite in the sense of Bowditch \cite{Bowditch95}.  We leave the detail to the interested reader.
\end{remark}

\end{document}